\renewcommand\part{%
   \if@noskipsec \leavevmode \fi
   \par
   \addvspace{4ex}%
   \@afterindentfalse
   \secdef\@part\@spart}
\def\@part[#1]#2{%
    \ifnum \c@secnumdepth >\m@ne
      \refstepcounter{part}%
      \addcontentsline{toc}{part}{\thepart\hspace{1em}#1}%
    \else
      \addcontentsline{toc}{part}{#1}%
    \fi
    {\parindent \z@ \raggedright
     \interlinepenalty \@M
     \normalfont
     \ifnum \c@secnumdepth >\m@ne
       \large\bfseries \partname\nobreakspace\thepart.
       %\par\nobreak
     \fi
     \large \bfseries #2%
     %%%\markboth{}{}\par}% removing redefinition of headings
     \par}%
    \nobreak
    \vskip 3ex
    \@afterheading}
\def\@spart#1{%
    {\parindent \z@ \raggedright
     \interlinepenalty \@M
     \normalfont
     \large \bfseries #1\par}%
     \nobreak
     \vskip 3ex
     \@afterheading}
\newcommand{\C}{\mathbb{C}}
\newcommand{\F}{\mathbb{F}}
\newcommand{\Q}{\mathbb{Q}}
\newcommand{\Z}{\mathbb{Z}}
\newcommand{\Qbar}{{\overline{\Q}}}
\newcommand{\rhobar}{{\overline{\rho}}}
\newcommand{\eps}{\varepsilon}
\newcommand{\calO}{\mathcal{O}}
\newcommand{\Fp}{\mathfrak{p}}
\newcommand{\Fq}{\mathfrak{q}}
\newcommand{\Ff}{\mathfrak{f}}
\DeclareMathOperator{\Aut}{Aut}
\DeclareMathOperator{\End}{End}
\DeclareMathOperator{\Frob}{Frob}
\DeclareMathOperator{\Gal}{Gal}
\DeclareMathOperator{\Norm}{Norm}
\DeclareMathOperator{\tr}{tr}
\newcommand{\unr}{{\operatorname{unr}}}
\newcommand{\vv}{\upsilon}
\newcommand{\GL}{\operatorname{GL}}
\newcommand{\SL}{\operatorname{SL}}
\numberwithin{equation}{section}
\newtheorem{theorem}[equation]{Theorem}
\newtheorem{lemma}[equation]{Lemma}
\newtheorem{corollary}[equation]{Corollary}
\newtheorem{proposition}[equation]{Proposition}
\theoremstyle{definition}
\theoremstyle{remark}
\newtheorem{remark}[equation]{Remark}
\definecolor{darkgreen}{rgb}{0,0.5,0}
\newcommand\Cc[1]{\tikz[remember picture]{\node(#1)[inner sep=0pt]{\sffamily#1};}}
\let\@wraptoccontribs\wraptoccontribs
\begin{document}

\title[On Darmon's program, II]{On Darmon's program \\ for the Generalized Fermat equation, II}

\author{Nicolas Billerey}
\address{Laboratoire de Math\'ematiques Blaise Pascal,
    Universit\'e Clermont Auvergne et CNRS, 
    Campus universitaire des C\'ezeaux,
    3, place Vasarely,
    63178 Aubi\`ere Cedex, France}
\email{nicolas.billerey@uca.fr}

\author{Imin Chen}

\address{Department of Mathematics, Simon Fraser University\\
Burnaby, BC V5A 1S6, Canada } \email{ichen@sfu.ca}

\author{Luis Dieulefait}

\address{Departament de Matem\`atiques i Inform\`atica,
Universitat de Barcelona (UB),
Gran Via de les Corts Catalanes 585,
08007 Barcelona, Spain; 
Centre de Recerca Matem\`atica (CRM), Edifici C, Campus Bellaterra, 08193 Bellaterra, Spain}
\email{ldieulefait@ub.edu}

\author{Nuno Freitas}
\address{
Instituto de Ciencias Matem\'aticas, CSIC, 
Calle Nicol\'as Cabrera
13--15, 28049 Madrid, Spain}
\email{nuno.freitas@icmat.es}

\thanks{Billerey was supported by the ANR-23-CE40-0006-01 Gaec project. Chen was supported by NSERC Discovery Grant RGPIN-2017-03892. Freitas was partly supported by the European Union's Horizon 2020 research and innovation programme under the Marie Sk\l{l}odowska-Curie grant 
agreement No.\ 747808 and the grant {\it Proyecto RSME-FBBVA $2015$ Jos\'e Luis Rubio de Francia}. Dieulefait and Freitas were partly supported by the PID2019-107297GB-I00 grant of the MICINN (Spain). Dieulefait was partly supported by the Spanish State Research Agency, through the Severo Ochoa and Mar\'ia de Maeztu Program for Centers and Units of Excellence in R\&D (CEX2020-001084-M)}

\date{\today}

\keywords{Generalized Fermat Equation, modular method, Frey abelian varieties}
\subjclass[2020]{Primary 11D41}

\begin{abstract} 
We obtain additional Diophantine applications of the methods surrounding Darmon's program for the generalized Fermat equation developed in the first part of this series of papers.
As a first application, we use a multi-Frey approach combining two Frey elliptic curves over totally real fields, a Frey hyperelliptic curve over~$\Q$
due to Kraus, and ideas from the Darmon program to give a complete resolution of the generalized Fermat equation $$x^7 + y^7 = 3 z^n$$ for all integers $n \ge 2$. Moreover, we explain how
the use of higher dimensional Frey abelian varieties allows a more efficient proof of this  result due to additional structures that they afford, compared to using only Frey elliptic curves.

As a second application, we use some of these additional structures that Frey abelian varieties possess to show that a full resolution of the generalized Fermat equation
$x^7 + y^7 = z^n$
depends only on the Cartan case of Darmon's big image conjecture.  In the process, we solve the previous equation for solutions $(a,b,c)$ such that~\(a\) and~\(b\) satisfy certain \(2\)- or~\(7\)-adic conditions and all $n \ge 2$. 

\end{abstract}

\maketitle

%%%%%%%%%%%%%%%%%%%%%%%%%%%%%%%%%%%%%%%%%%%%%%%%%%%%%%
\section{Introduction}
%%%%%%%%%%%%%%%%%%%%%%%%%%%%%%%%%%%%%%%%%%%%%%%%%%%%%%

This is the second part of a series of papers on Darmon's program~\cite{DarmonDuke} for the generalized Fermat equation
\begin{equation}\label{E:rrp}
  x^r + y^r = dz^p,
\end{equation}
with $r$ a fixed prime, $d$ a fixed positive integer and $p$ a prime exponent that is allowed to vary. In the first part~\cite{xhyper_vol1}, building on the progress surrounding the modular method from the last two decades, we analyzed and expanded the limits of this program by developing all the necessary ingredients to use Frey abelian varieties to study~\eqref{E:rrp}. As an application we studied the equation $x^{11} + y^{11} = z^p$.

Here we will explore additional Diophantine consequences of the theory in~\cite{xhyper_vol1} with a focus on computational aspects; these are an essential part of making the program work in practice that were not addressed in the original paper of Darmon.

Let us provide some motivation for the Diophantine equations studied in this paper.
In the works of
Dieulefait--Freitas~\cite{DF2,DF1} and Freitas~\cite{F} several Frey curves are attached to the Fermat equation~\eqref{E:rrp}.
These curves are defined over totally real subfields of the $r$-th cyclotomic field~$\Q(\zeta_r)$. Together with powerful developments of modularity lifting theorems for totally real fields (e.g. \cite{BreuilDiamond}) and consequent modularity of many elliptic curves over these fields, this opened a door to pursue the study of Fermat equations~\eqref{E:rrp}.

Building on these cited works, using a refined multi-Frey curve approach, we have solved equation~\eqref{E:rrp} with $r=5,13$ and $d=3$ for all exponents $p \geq 2$ (see~\cite[Theorems~1 and~2]{BCDF2} and the joint work with Demb\'el\'e~\cite{BCDDF}).  Moreover, the fourth author~\cite[Theorem~1.10]{F} solved it  asymptotically for $r=7$ and $d=3$, i.e., for all primes $p$ sufficiently large. 

Let us recall that a solution $(a,b,c) \in \Z^3$ to~\eqref{E:rrp} is {\it non-trivial} if it satisfies $abc \neq 0$ and we call it {\it primitive} if $\gcd(a,b,c) = 1$. In this paper, we  optimize this latter result by proving the following.

\begin{theorem}\label{T:main}
For all integers $n \geq 2$, there are no non-trivial primitive solutions to
\begin{equation}\label{E:77p}
    x^7 + y^7 = 3 z^n.
\end{equation} 
\end{theorem}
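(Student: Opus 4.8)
The plan is to apply the multi-Frey modular method, combining the two Frey elliptic curves over totally real subfields of $\Q(\zeta_7)$ used in~\cite{F} and~\cite{DF2,DF1}, together with Kraus's Frey hyperelliptic curve over $\Q$, and to feed in the refined information coming from the Darmon-program framework of~\cite{xhyper_vol1}. Suppose $(a,b,c)$ is a non-trivial primitive solution to~\eqref{E:77p} for some prime exponent $p=n$ (the composite and small cases $n=2,3,4,5$ will be treated separately by descent/Chabauty-type arguments on the relevant curves $x^7+y^7=3z^n$, which have small genus after the change of variables). First I would recall from~\cite{F} that for $p$ sufficiently large the asymptotic result already rules out solutions; so the real content is to make the bound effective and then clear the remaining finite list of exponents. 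For each Frey curve $F/K$ (with $K$ totally real), one attaches the mod-$p$ representation $\rhobar_{F,p}$, proves it is modular (via the modularity results over totally real fields cited in the introduction) and irreducible for the relevant $p$, and lowers the level to obtain a Hilbert newform $\boldf$ of explicit level and weight over $K$ with $\rhobar_{F,p}\isom\rhobar_{\boldf,\mathfrak p}$.

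The key computational steps are then the following. First, enumerate the (finitely many) Hilbert newform spaces at the predicted levels over the two real fields and the classical newform space attached to Kraus's hyperelliptic Frey curve over $\Q$. Second, for each surviving newform $\boldf$ in each space, carry out the standard elimination: compare traces of Frobenius at several auxiliary primes $q$ via the congruence $a_q(F)\equiv a_{\boldf}(\mathfrak q)\pmod{\mathfrak p}$, where $a_q(F)$ ranges over the possibilities coming from the fibres of the Frey curve at $q$; this produces for each $(\boldf,q)$ an explicit integer $B_{\boldf,q}$ that $p$ must divide, and intersecting over $q$ kills $\boldf$ for all but finitely many $p$. Third, and this is where the extra structure from~\cite{xhyper_vol1} enters, use the $\Q$-curve / Frey abelian variety structure — in particular the image constraints, the inner twist relations, and the compatibility of the several $\rhobar$'s across the multi-Frey system — to eliminate the newforms whose associated abelian varieties have complex multiplication or extra endomorphisms that the naive trace comparison cannot distinguish, and to handle the small exceptional primes $p\in\{3,5,7,11,13,\dots\}$ by ad hoc arguments (e.g. analysing the projective image, using the symplectic criterion, or a direct descent).

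The main obstacle I expect is twofold. The genuinely hard part is the elimination of the few newforms that survive all congruence conditions because they correspond to abelian varieties sharing enough local data with the Frey varieties — typically CM forms, or forms predicted to exist by Darmon's ``big image'' heuristics — and ruling these out is exactly what the higher-dimensional Frey abelian variety structures in~\cite{xhyper_vol1} are designed to do more efficiently than Frey elliptic curves alone; getting the multi-Frey combination to close off every residual case (rather than merely an asymptotic statement) is the crux. The secondary obstacle is purely computational scale: the Hilbert modular forms spaces over $\Q(\zeta_7)^+$ at the relevant levels are large, so one must choose the auxiliary primes $q$ and the field $K$ (or the relevant twist) cleverly so that the computation terminates, and one must separately verify modularity and irreducibility of $\rhobar_{F,p}$ uniformly in $p$, invoking the level-lowering and modularity inputs recalled above.
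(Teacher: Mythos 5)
Your high-level strategy (multi-Frey curves, modularity, irreducibility, level lowering, then elimination of newforms at auxiliary primes) is indeed the skeleton of the paper's argument, but as written the proposal has three concrete gaps, one of which is the actual crux of the proof.

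First, the plan to ``make the asymptotic bound of~\cite{F} effective and then clear a finite list of exponents'' is not how the argument closes, and no such effective bound is extracted. The divisibility relations $p \mid B_{\boldf,q}$ that you correctly describe already treat all primes $p\ge 5$, $p\ne 7$ uniformly; the handful of surviving small exponents (essentially $p=5$, and $p=13$ for one form) are removed by refined elimination (using the congruences themselves rather than their norms) and, in one case, by showing via an Eisenstein congruence that the surviving form has reducible mod~$13$ representation. Only $n=2,3,7$ are quoted from the literature; $n=5$ is handled by the modular method itself, not by Chabauty. Second, and decisively: the reason the single-curve proof with $F_{a,b}/K$ is computationally feasible at all is a carefully chosen quadratic twist $\delta$ (e.g.\ $\delta=-7$ or $-7\omega_2$ depending on $2$- and $7$-adic conditions on $a,b$) which lowers the Serre level at $\Fq_2$ from exponent $4$ to $3$ and removes $\Fq_7$ entirely when $7\nmid a+b$. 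Without this conductor reduction the relevant spaces of Hilbert newforms are out of reach, and your proposal does not identify this mechanism (it was found by computing the quadratic extensions of $K$ ramified only at $\Fq_2$ via a $2$-Selmer group computation).

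Third, Kraus's hyperelliptic Frey curve does not contribute a ``classical newform space over $\Q$'': its Jacobian is of $\GL_2$-type only after base change to $K=\Q(\zeta_7)^+$, so the comparison is again with Hilbert newforms over $K$ (at level $\Fq_2^2\Fq_3\Fq_7^2$), with coefficient fields of degree up to $54$. The structures that make this tractable are the constraint $K\subset K_g$ on the coefficient field (which immediately discards most Hecke constituents), the pairing of the remaining forms into twists by $\chi_7$, and the symmetry $C_7(b,a)=C_7(a,b)^{(-1)}$ --- not the exclusion of CM forms: for the coefficient $d=3$ the obstructing forms attached to trivial solutions (including the CM variety $J(0,1)$) simply do not occur at the relevant levels, which is precisely why this equation, unlike $x^7+y^7=z^p$, can be solved unconditionally. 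Your proposal gestures at ``extra endomorphism'' structure but does not supply these specific devices, nor the irreducibility argument for $\rhobar_{F,p}$ for all $p\ge 5$, which itself requires a nontrivial ray-class-group and modular-curve analysis.
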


We remark that there is a well known Frey curve~$E_{a,b}/\Q$ attached to~\eqref{E:77p} but it is insufficient to solve the equation (see Section~\ref{S:overQ}).
To highlight the importance of the multi-Frey technique and Darmon's program
both at the theoretical and computational levels,
we will give three proofs of this theorem
which are divided into two types. 

The first type of proof uses the classical modular method with Frey elliptic curves over totally real fields, which is made possible by a previously unobserved and careful selection of  twists of a certain Frey curve. Indeed, our first proof of Theorem~\ref{T:main}, given in Section~\ref{S:Frey7first}, relies on a single Frey curve $F_{a,b}/\Q(\zeta_7)^+$ from~\cite{F} and ideas from~\cite{BCDF2}. Here~\(\Q(\zeta_7)^+\) denotes the (degree-\(3\)) maximal totally real subfield of the \(7\)-th cyclotomic field~\(\Q(\zeta_7)\). A variant of this proof combines both $E_{a,b}$ with~$F_{a,b}$ to reduce the total memory usage, giving a prime example of the advantages of the multi-Frey technique.

The second type of proof adds in the use of Frey abelian varieties following the theory developed in~\cite{xhyper_vol1}. More precisely, we use a multi-Frey approach combining the Frey curves $E_{a,b}$ and~$F_{a,b}$ with Kraus' hyperelliptic curve $C_7(a,b)$ (introduced in \cite[\S 3]{xhyper_vol1}) in two different ways. To illustrate the limits of the theory, the first proof of this type uses the curve $C_7(a,b)$ as much as possible, whilst the second proof is designed to minimize the computational time among all proofs we give.

We find it very interesting and promising that using Frey varieties which, due to their larger field of coefficients should {\it a priori} require harder and lengthier computations, ends up allowing for the most efficient proof. We note that when trying to solve~$\eqref{E:rrp}$ for~$r > 7$ or, more generally, carrying out Darmon's program for other signatures, the spaces of Hilbert newforms involved will quickly become very large. This work and~\cite{xhyper_vol1} show that the Frey varieties have additional structures which one can exploit to reduce computations, despite the fact that we have to work with Jacobians of hyperelliptic curves.

As a second application, we will also prove the following result. Here~\(J(a,b)\) denotes the base change to~\(\Q(\zeta_7)^+\) of the Jacobian of~\(C_7(a,b)\), which is abelian variety of~\(\GL_2\)-type (see~\S\ref{ss:recap} for more details and a summary of the results from~\cite{xhyper_vol1} used in this paper).

\begin{theorem}\label{T:other}
\begin{enumerate}
\item[]
\item\label{T:other_item1} For all integers $n \geq 2$, there are no non-trivial primitive solutions $(a,b,c)$ to the equation
\begin{equation}
\label{main-equ}
    x^{7} + y^{7} = z^n
\end{equation} 
such that $2 \nmid ab$ or $7 \mid a + b$.

\item\label{T:other_item2} Let $p \notin \left\{ 2, 3, 7 \right\}$ be a prime. Then any non-trivial primitive solution $(a,b,c)$ to the equation~\eqref{main-equ} with $n=p$ gives rise to a residual representation $\rhobar_{J(a,b),\Fp}$ such that 
\[
\rhobar_{J(a,b),\Fp} \simeq \rhobar_{J(0,1),\Fp} \otimes \chi
\]
for a character $\chi$ of order dividing $2$ and any choice of prime $\Fp$ of $\Q(\zeta_7)^+$ above~$p$.
\end{enumerate}
\end{theorem}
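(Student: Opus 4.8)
My plan is to prove the second part of the statement first, by a modular-method argument for the Frey abelian variety $J(a,b)$, and then to extract the first part from the same analysis. Throughout I would rely on the package of results recalled in~\S\ref{ss:recap} and established in~\cite{xhyper_vol1}: that $J(a,b)$ is a modular abelian variety of $\GL_2$-type over $\Q(\zeta_7)^+$, that $\rhobar_{J(a,b),\Fp}$ is absolutely irreducible for $p\notin\{2,3,7\}$ (a reducible $\rhobar_{J(a,b),\Fp}$, if not already excluded there, being dispatched directly through its constituent characters), and that a Ribet-type level-lowering theorem holds for it. So let $(a,b,c)$ be a non-trivial primitive solution of~\eqref{main-equ} with $n=p\notin\{2,3,7\}$ and fix $\Fp\mid p$ in $\Q(\zeta_7)^+$. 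First I would read the bad reduction of $J(a,b)$ off Kraus' model: away from $\{2,7,p\}$ its reduction is either good or multiplicative with $p$ dividing the conductor-driving valuation, and at $p$ it is likewise controlled, so level lowering places $\rhobar_{J(a,b),\Fp}$ in the span of parallel-weight-$2$ Hilbert newforms over $\Q(\zeta_7)^+$ with level supported at $\{2,7\}$ and the predicted nebentypus --- finitely many pairs $(\n,\text{nebentypus})$ depending only on $v_2(abc)$, $v_7(abc)$, and whether $7\mid a+b$. Thus $\rhobar_{J(a,b),\Fp}\simeq\rhobar_{g,\mathfrak P}$ for some newform $g$ in this finite collection of spaces and some $\mathfrak P\mid p$.

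The heart of the argument is to enumerate these newforms and to discard every $g$ whose mod-$\mathfrak P$ representation is not a quadratic twist of $\rhobar_{g_0,\mathfrak P}$, where $g_0$ is the (rational, CM) Hilbert newform attached to $J(0,1)$. For such a $g$ I would use the standard elimination step: for a well-chosen auxiliary prime $\Fq\nmid p\n$, counting points on $C_7(a,b)$ over the residue field of $\Fq$ for all residues of $(a:b)$ yields an explicit finite list of possible values of $a_\Fq(J(a,b))$, so the isomorphism $\rhobar_{J(a,b),\Fp}\simeq\rhobar_{g,\mathfrak P}$, read off at $\Fq$ and allowing for the quadratic-twist ambiguity, forces $\mathfrak P$ to divide a product of explicit nonzero algebraic integers, which bounds~$p$; running over enough auxiliary primes removes every such $g$ for all $p\notin\{2,3,7\}$, the finitely many CM newforms being handled by hand. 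What remains, by construction, is exactly $\rhobar_{J(a,b),\Fp}\simeq\rhobar_{g_0,\mathfrak P}\otimes\chi=\rhobar_{J(0,1),\Fp}\otimes\chi$ with $\chi$ of order dividing~$2$, which is the assertion. That this last possibility genuinely survives reflects the complex multiplication on $J(0,1)$: eliminating it as well would mean proving the image of $\rhobar_{J(a,b),\Fp}$ is not contained in the normalizer of a Cartan subgroup, i.e.\ the Cartan case of Darmon's big image conjecture.

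For the first part, since a solution of~\eqref{main-equ} with exponent $n$ yields one with exponent $q$ for every prime $q\mid n$, it suffices to treat $n=q$ prime. The case $q=7$ is Fermat's Last Theorem, and $q\in\{2,3\}$ (hence $n=4$) I would settle by classical descent on the factorisation $x^7+y^7=(x+y)(x^6-x^5y+\cdots+y^6)$, or by citing the existing work on signatures $(7,7,2)$ and $(7,7,3)$. For $q\geq5$ with $q\neq7$, I would rerun the analysis above but feed the hypothesis $2\nmid ab$ (respectively $7\mid a+b$) into the local computation at~$2$ (respectively~$7$): this pins the conductor exponent there to a value for which no quadratic twist of $\rhobar_{g_0,\mathfrak P}$ lies in the resulting newform space --- equivalently, $\rho_{J(0,1),\Fp}$ has the wrong inertial type at that place, a discrepancy no quadratic twist can absorb --- so every newform surviving level lowering is removed by the elimination step and no such solution exists. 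If one of the two local situations is not decisive on its own, I would instead place $J(a,b)$ into a multi-Frey argument together with the elliptic curves $E_{a,b}/\Q$ and $F_{a,b}/\Q(\zeta_7)^+$, whose conductors become small enough under exactly these hypotheses for the classical modular method, and obtain the contradiction from their (empty or fully eliminable) newform spaces.

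The main obstacle I expect is the computational core of the second part: enumerating the parallel-weight-$2$ Hilbert newforms over the cubic field $\Q(\zeta_7)^+$ at the relevant levels and carrying out an elimination that is uniform in $p$, while ensuring that the class ``$\rhobar_{J(0,1),\Fp}$ up to quadratic twist'' is \emph{not} eliminated --- the argument must isolate precisely that class and nothing more, since removing it as well would be tantamount to a full, currently out-of-reach resolution of~\eqref{main-equ}. A secondary difficulty is the precise local analysis at~$7$ needed to separate cleanly the cases settled in the first part from the Cartan case that remains open.
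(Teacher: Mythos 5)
Your toolkit is the right one, but the logical order you propose --- part (2) first, part (1) extracted afterwards --- cannot be made to work, and the obstruction is not a technicality. The entire machinery for $J=J(a,b)$ imported from \cite{xhyper_vol1} (the conductor computation at $\Fq_2$, the level-lowering statement, and above all the absolute irreducibility of $\rhobar_{J,\Fp}$ in Proposition~\ref{P:case7}) is only available under the normalization $a\equiv 0\pmod 2$, $b\equiv 1\pmod 4$; when $2\nmid ab$ (equivalently $2\mid a+b$), irreducibility of $\rhobar_{J,\Fp}$ is an open problem that the paper explicitly flags as fundamental (see the discussion after Theorem~\ref{T:overQvariety} and \cite[Section~6]{xhyper_vol1}). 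Your parenthetical plan to dispatch a reducible $\rhobar_{J,\Fp}$ ``directly through its constituent characters'' is precisely the step nobody knows how to carry out for this three-dimensional Jacobian. Consequently your primary route to part (1) in the case $2\nmid ab$ (running the $J$-elimination with the odd-$ab$ conductor at $\Fq_2$) collapses at the first step, and part (2) cannot be established before part (1).

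The paper inverts your order: part (1) is proved purely with the Frey elliptic curve $F_{a,b}$ over $\Q(\zeta_7)^+$ and its quadratic twists (the argument of Section~\ref{S:Frey7first} with $\Fq_3$ deleted from the levels), for which irreducibility is Proposition~\ref{P:irredOverK}; this is essentially your fallback multi-Frey option, which is in fact the only viable one. Part (1) then guarantees $2\mid ab$ and $7\nmid a+b$ for any surviving solution to~\eqref{main-equ}, which licenses the normalization $a\equiv 0\pmod 2$, $b\equiv1\pmod 4$ and hence the $J$-based level lowering to level $\Fq_2^2\Fq_7^2$; the elimination there (using the constraint $K\subset K_g$ and auxiliary primes) leaves exactly the form attached to $J(0,1)$, and the character $\chi$ of order dividing $2$ records the swap and sign-change of $(a,b)$ used in the normalization. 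Your description of that elimination step, and of why the CM form must survive, is otherwise accurate.
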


The coefficient $d = 3$ in~Theorem \ref{T:main} and the $2$- and $7$-adic conditions in part~(\ref{T:other_item1}) of Theorem~\ref{T:other} are due to the Hilbert newforms at the Serre level corresponding to trivial primitive solutions with $ab = 0$, which are well-known obstructions to the success of the modular method. 

It should be remarked that the natural conclusion of part~(\ref{T:other_item2}) in Theorem~\ref{T:other} based on Darmon's original program would also include the possibility that $\rhobar_{J(a,b),\Fp}$ is reducible. A surprising outcome of our study is that we have used Frey elliptic curves whose Galois representations do not arise by specialization of Darmon's Frey representations to `propagate' irreducibility results to Kraus' Frey hyperelliptic curve, also not predicted in the initial program, thereby removing the need for the Borel case in Conjecture 4.1 in \cite{DarmonDuke}.

Conjecture~1.3 in~\cite{xhyper_vol1} is a special case of~\cite[Conjecture 4.1]{DarmonDuke} which is only concerned with the case of normalizer of Cartan subgroups. In the spirit of~\cite[Corollary~1.9]{xhyper_vol1} we prove the following result. 
\begin{corollary}\label{C:reduction2CM}
Assuming Conjecture 1.3 in~\cite{xhyper_vol1}, there are no non-trivial primitive solutions to
\begin{equation*}
    x^7 + y^7 = z^p
\end{equation*}
when $p$ is sufficiently large.
\end{corollary}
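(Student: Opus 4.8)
The plan is to combine part~(\ref{T:other_item2}) of Theorem~\ref{T:other} with Conjecture~1.3 of~\cite{xhyper_vol1} to eliminate, for $p$ large, all residual images of $\rhobar_{J(a,b),\Fp}$ that are \emph{not} contained in the normalizer of a Cartan subgroup, and then to derive a contradiction in the remaining Cartan case. First I would fix a prime $p \notin \{2,3,7\}$, assume there exists a non-trivial primitive solution $(a,b,c)$ to $x^7+y^7=z^p$, and invoke Theorem~\ref{T:other}(\ref{T:other_item2}) to get
\[
  \rhobar_{J(a,b),\Fp} \simeq \rhobar_{J(0,1),\Fp} \otimes \chi
\]
for a quadratic character $\chi$ and any $\Fp \mid p$ in $\Q(\zeta_7)^+$. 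In particular the projective image of $\rhobar_{J(a,b),\Fp}$ equals that of $\rhobar_{J(0,1),\Fp}$, so the image is controlled by the fixed abelian variety $J(0,1)$ attached to the trivial solution $(0,1)$, independently of $(a,b,c)$.

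Next I would recall (from~\S\ref{ss:recap}, summarizing~\cite{xhyper_vol1}) that Conjecture~1.3 of~\cite{xhyper_vol1} asserts that for $p$ sufficiently large the image of $\rhobar_{J(a,b),\Fp}$ is \emph{not} contained in the normalizer of a Cartan subgroup of $\GL_2(\F_p)$ (this being the Cartan case of Darmon's big image conjecture, the only case left open after Theorem~\ref{T:other}(\ref{T:other_item2}) has removed the Borel/reducible possibility). Combined with the classification of maximal subgroups of $\GL_2(\F_p)$ and the standard large-image arguments from the modular method — the image cannot be exceptional (projectively $A_4$, $S_4$, $A_5$) for $p$ large since the projective image is pinned to that of the \emph{fixed} representation $\rhobar_{J(0,1),\Fp}$, which is itself large by a direct computation or by the results recalled in~\S\ref{ss:recap} — I would conclude that for all sufficiently large $p$ the only surviving possibility forces $\rhobar_{J(0,1),\Fp}$ (hence $\rhobar_{J(a,b),\Fp}$) to have image containing $\SL_2(\F_p)$. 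Here I would lean on~\cite[Corollary~1.9]{xhyper_vol1}, which the statement explicitly invokes as a model, to package the comparison between the big-image conjecture and the surviving newforms.

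Finally I would reach a contradiction: with image containing $\SL_2(\F_p)$, the isomorphism $\rhobar_{J(a,b),\Fp} \simeq \rhobar_{J(0,1),\Fp} \otimes \chi$ together with the level-lowering analysis already carried out in the proof of Theorem~\ref{T:other} (the Serre conductor of $\rhobar_{J(a,b),\Fp}$ is that of the trivial solution, which is incompatible with the arithmetic of a genuine non-trivial $(a,b,c)$ for $p$ large) rules out the existence of $(a,b,c)$. Concretely, this is the same elimination step that in Theorem~\ref{T:main} is done unconditionally for $d=3$: the obstruction there came precisely from the trivial-solution newform, and the content of the present corollary is that the conjectural big-image input lets the same mechanism run for $d=1$ once $p$ is large. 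The main obstacle — and the reason the result must remain conditional — is exactly the Cartan case: without Conjecture~1.3 of~\cite{xhyper_vol1} one cannot exclude that $\rhobar_{J(a,b),\Fp}$ lands in the normalizer of a (split or nonsplit) Cartan subgroup, and all the other ingredients (irreducibility via the auxiliary Frey elliptic curves, level-lowering for $\GL_2$-type abelian varieties, bounding the exceptional cases) are already in place from~\cite{xhyper_vol1} and from the proof of Theorem~\ref{T:other}.
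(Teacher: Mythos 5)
There is a genuine gap, and it sits at the heart of your argument: you claim that $\rhobar_{J(0,1),\Fp}$ ``is itself large by a direct computation or by the results recalled in \S\ref{ss:recap}'' and then conclude that its image (hence that of $\rhobar_{J(a,b),\Fp}$) contains $\SL_2(\F_p)$. This is false. As recalled in \S\ref{ss:recap}, when $ab=0$ the curve $C_7(a,b)$ has the automorphism $(x,y)\mapsto(\zeta_7 x,y)$, so $J(0,1)$ has CM by $\Q(\zeta_7)$; consequently $\rhobar_{J(0,1),\Fp}$ is dihedral, i.e.\ its image \emph{is} contained in the normalizer of a Cartan subgroup. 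Far from being an inconvenient possibility to be excluded, this is precisely the source of the contradiction: since twisting by a character preserves containment in the normalizer of a Cartan, the isomorphism $\rhobar_{J(a,b),\Fp}\simeq\rhobar_{J(0,1),\Fp}\otimes\chi$ from Theorem~\ref{T:other}(\ref{T:other_item2}) forces the image of $\rhobar_{J(a,b),\Fp}$ \emph{into} the normalizer of a Cartan, while Conjecture~1.3 of~\cite{xhyper_vol1} says that for $p$ large this image is \emph{not} contained in any such normalizer. That is the entire proof; no classification of maximal subgroups, no exclusion of exceptional images, and no further elimination step is needed.

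Your proposed final step also would not work on its own terms: you assert that the Serre conductor of $\rhobar_{J(a,b),\Fp}$ being that of the trivial solution is ``incompatible with the arithmetic of a genuine non-trivial $(a,b,c)$,'' but the opposite is true --- the newform attached to $J(0,1)$ occurs at exactly the relevant Serre level, which is why it survives the elimination in the proof of Theorem~\ref{T:other} and why that theorem cannot be pushed to an unconditional resolution. The conditional input is needed not to rule out the Cartan case as one branch among several, but because the Cartan case is the \emph{only} configuration compatible with the surviving CM form, and the conjecture contradicts it directly.
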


\paragraph*{\bf Electronic resources} For the computations needed in this paper, we used  {\tt Magma} \cite{magma}. Our programs are posted in \cite{programs}, where there is included descriptions, output transcripts, timings, and machines used.

%%%%%%%%%%%%%%%%%%%%%
\paragraph*{\bf Acknowledgments} We thank Lassina Demb\'el\'e, Angelos Koutsianas and Ariel Pacetti for helpful discussions and remarks on a preliminary version. The fourth author is grateful to Max Planck Institute for Mathematics in Bonn for its hospitality and financial support.

%%%%%%%%%%%%%%%%%%%%%
\section{Notation and remarks}\label{S:rks}
%%%%%%%%%%%%%%%%%%%%%

Let~$\zeta_7$ be a primitive $7$th root of unity in~$\C$.  Recall the following standard factorization:
\begin{equation}\label{eq:basic_factorization_case_r_equal_7}
x^7+y^7=(x+y)\phi_7(x,y)=(x+y)f_1(x,y)f_2(x,y)f_3(x,y)
\end{equation}
where~$\phi_7(x,y)=x^6-x^5y+x^4y^2-x^3y^3+x^2y^4-xy^5+y^6$ and for~$i=1,2,3$,
\[
\omega_i=\zeta_7^i+\zeta_7^{-i},\quad f_i(x,y)=x^2 + \omega_i xy + y^2.
\]

The number field~$K = \Q(\zeta_7)^+ = \Q(\omega_1)$ is the cubic totally real subfield of~$\Q(\zeta_7)$. A defining polynomial for~\(K/\Q\) is given by~\(X^3 + X^2 -2X -1\) which is the minimal polynomial of~\(\omega_1\). The field~\(K\) has absolute discriminant~$49$. Let~$\calO_K$ denotes its integer ring. The unit group of~$K$ is generated by $\{-1, \epsilon_1, \epsilon_2\}$ where $\epsilon_1 = -\omega_1^2 + 1$ and $\epsilon_2 = \omega_1 + 1$.

We write $\Fq_2$, $\Fq_3$ and $\Fq_7$ for the unique prime ideals in~$K$ above~\(2\), \(3\) and~\(7\), respectively. More generaly, if~\(\ell\) is a prime, then~\(\Fq_{\ell}\) denotes a prime ideal above~\(\ell\) in~\(K\). If~\(\mathfrak{a}\) is a nonzero (integral) ideal of~\(\calO_K\), we write~\(N(\mathfrak{a})\) for its absolute norm.

If~\(\ell\) is a prime number and~\(n\) is a non-zero integer, we denote by~\(v_\ell(n)\) the \(\ell\)-adic valuation of~\(n\) at~\(\ell\); when~\(v_\ell(n) > 0\), we also write~\(\ell^{v_\ell(n)}\parallel n\).

Let~\(a,b\) be coprime integers such that~\(a + b \neq 0\). Then, \(f_1(a,b)\), \(f_2(a,b)\), and~\(f_3(a,b)\) are pairwise coprime away from~\(\Fq_7\). We have~\(\gcd(a + b, \phi_7(a,b)) = 1\) or~\(7\). Moreover, $7\mid \phi_7(a,b)$ if and only if~$7\mid a+b$, and in that case, we have $v_7(\phi_7(a,b))=1$ (see~\cite[Lemma~2.1]{DahmenSiksek}).

Let us finally make some basic remarks on equations~\eqref{E:77p} and~\eqref{main-equ}. 

The case $n = 2$ of Theorem~\ref{T:main}  follows from \cite[Theorem 1.1]{BenSki} and the case $n = 3$ from \cite[Theorem 1.5]{BenVatYaz}. The case $n = 7$ is covered by Theorem~2 in \cite[Section~4.3]{Serre87} by noticing that the proof in {\it loc.\ cit.} holds for exponents~$\geq 5$ when $L = 3$. 

The case $n = 2, 3$ of Part~\eqref{T:other_item1} in Theorem~\ref{T:other} follows from \cite{DarmonMerel}. The case~\(n = 7\) is a particular case of Fermat's last theorem whose first proof is due to Lam\'e (see~\cite{Lame}).

Therefore, in solving~\eqref{E:77p} and~\eqref{main-equ}, we can (and will) assume~$n = p$ is prime and~$p\geq 5$, $p\neq 7$.

%%%%%%%%%%%%%%%%%%%%%%%%%%%%%%%%%%%%%%%%%%%%%%%%%%%%%%
\section{A Frey curve over $\mathbb{Q}$}
\label{S:overQ}
%%%%%%%%%%%%%%%%%%%%%%%%%%%%%%%%%%%%%%%%%%%%%%%%%%%%%%

%%%%%%%%%%%%%%%%%%%%%
\subsection{Basic properties}
%%%%%%%%%%%%%%%%%%%%%

Let~$a$ and~$b$ be coprime integers. We consider the elliptic curve~$E_{a,b}$ defined over~$\Q$ by the equation
\begin{equation}\label{eq:EoverQ}
E_{a,b}\ : \ Y^2 = X^3 + a_2X^2 + a_4 X + a_6,
\end{equation}
where
\begin{eqnarray*}
   a_2 & = & -(a-b)^2, \\
   a_4 & = & -2a^4 + a^3 b - 5a^2 b^2 + ab^3 - 2b^4, \\
   a_6 & = & a^6 - 6a^5 b + 8a^4 b^2 - 13a^3 b^3 + 8a^2b^4 - 6ab^5 + b^6.
\end{eqnarray*}
The standard invariants attached to the model~\eqref{eq:EoverQ} are
\begin{eqnarray*}
c_4(E_{a,b}) & = & 2^4\cdot 7(a^4 - a^3b + 3a^2b^2 - ab^3 + b^4), \\
c_6(E_{a,b}) & = & 2^5\cdot 7(a^6 - 15a^5b + 15a^4b^2 - 29a^3b^3 + 15a^2b^4 - 15ab^5 + b^6), \\
\Delta(E_{a,b}) & = & 2^4\cdot7^2\phi_7(a,b)^2.
\end{eqnarray*}
This curve has been considered by Kraus in~\cite[\S4.5.1.3]{kraus5} and by Freitas (up to the change of variables given by~$X'=6^2(X-(a-b)^2/3)$ and~$Y'=6^3Y$, this is the curve denoted~$E_{(a,b)}$ in~\cite[p.~618]{F}; note the sign mistake in the definition of the coefficient~\(a_4\) in the published version of \emph{loc. cit.} though).

Let $F/\Q_\ell$ be a finite extension and $F^\unr$ its maximal unramified extension in a fixed algebraic closure of~\(\Q_\ell\). Let $E/F$ an elliptic curve with additive potential good  reduction. The extension $F^\unr(E[p])$ where $p \geq 3$ is a prime~$\neq \ell$ is independent of~$p$ and it is
the minimal extension of~$F^\unr$ where $E$ obtains good reduction. 
The degree $e = [F^\unr(E[p]) : F^\unr]$ is called the semistability defect of~$E$.    

\begin{proposition}\label{P:conductorE}
The model~\eqref{eq:EoverQ} of~$E_{a,b}$ is minimal. The conductor~$N(E_{a,b})$ of~$E_{a,b}$ is given by $N(E_{a,b})=2^{\alpha}7^2\mathrm{rad}_7(\phi_7(a,b))$, where~$\mathrm{rad}_7(\phi_7(a,b))$ is the product of all primes $\neq 7$ dividing~$\phi_7(a,b)$ and
\begin{equation}\label{eq:cond_of_E_at_2}
\alpha = \left\{ \begin{array}{ll}
2 & \quad \text{if $4 \mid ab$},\\ 
3 & \quad \text{if  $2 \parallel ab$ or $4\mid a+b$}, \\
4 & \quad \text{if $2\parallel a+b$}. \\
\end{array} \right.
\end{equation}
Moreover, we have the following properties~:
\begin{itemize}
\item the curve $E_{a,b}$ has additive potentially good reduction at~$7$
with semistability defect $e=3$ or~$6$ if~$7\mid a+b$ or~$7\nmid a+b$, respectively;
\item the curve $E_{a,b}$ has additive potentially good reduction at~$2$ with semistability 
defect $e=6$ or~$24$ if~$ab$ is even or~$ab$ is odd, respectively;
\item if~$E_{a,b}$ has bad reduction at a prime~$\ell\not=2,7$, then $\ell\equiv1\pmod{7}$ and~$v_\ell(\Delta(E_{a,b}))=v_\ell(\phi_7(a,b))=v_\ell(a^7+b^7)$.
\end{itemize}
\end{proposition}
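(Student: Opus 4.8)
The plan is to read off the reduction of $E_{a,b}$ one prime at a time from the standard invariants $c_4(E_{a,b})$, $c_6(E_{a,b})$ and $\Delta(E_{a,b})=2^4\cdot7^2\phi_7(a,b)^2$ recorded above, using Tate's algorithm together with the standard dictionary for the conductor exponent and the semistability defect: for $\ell\ge5$, additive reduction has conductor exponent $2$ and, when the reduction is potentially good, $e=12/\gcd(v_\ell(\Delta_{\min}),12)$, whereas for $\ell\in\{2,3\}$ one uses Papadopoulos' tables and, when these are inconclusive, runs Tate's algorithm on the explicit Weierstrass model. As $E_{a,b}$ is the curve treated in \cite[\S4.5.1.3]{kraus5} and \cite[p.~618]{F}, all of this is in principle available there; I outline a self-contained argument below. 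I would first dispose of minimality: the necessary condition for non-minimality at $\ell$ (that $v_\ell(c_4)\ge4$, $v_\ell(c_6)\ge6$ and $v_\ell(\Delta)\ge12$) fails at every prime, because $v_2(\Delta(E_{a,b}))=4$ and $v_7(\Delta(E_{a,b}))\le4$ (shown below) and, at a bad prime $\ell\nmid2\cdot7$, $v_\ell(c_4(E_{a,b}))=0$ (next paragraph); so \eqref{eq:EoverQ} is minimal.

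For a prime $\ell\nmid2\cdot7$, bad reduction amounts to $\ell\mid\phi_7(a,b)$. For such an $\ell$, coprimality of $a,b$ forces $\ell\nmid ab$, and modulo $\ell$ the class of $ab^{-1}$ is a root of $\phi_7(x,1)=(x^7+1)/(x+1)$, hence a primitive $14$th root of unity, so $14\mid\ell-1$; in particular $\ell\equiv1\pmod7$. Writing $c_4(E_{a,b})=2^4\cdot7\,g(a,b)$ with $g(x,y)=x^4-x^3y+3x^2y^2-xy^3+y^4$, a resultant computation shows that $\Res_x(\phi_7(x,1),g(x,1))$ is, up to sign, a power of $7$ (cf.\ \cite{F}); since $\ell\neq7$ this gives $\ell\nmid c_4(E_{a,b})$, so $E_{a,b}$ has multiplicative reduction at $\ell$ and $v_\ell(N(E_{a,b}))=1$. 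Finally $\phi_7(x,y)\equiv(x+y)^6\pmod7$, so $7\mid\phi_7(a,b)$ iff $7\mid a+b$; hence here $\ell\nmid a+b$ and $v_\ell(a^7+b^7)=v_\ell(\phi_7(a,b))$, which gives the third bullet and the factor $\mathrm{rad}_7(\phi_7(a,b))$ of the conductor.

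At $\ell=7$: the factor $7$ in $c_4(E_{a,b})$ shows the reduction is additive, so (as $7\ge5$) the conductor exponent at $7$ is $2$, contributing $7^2$. By \S\ref{S:rks}, $v_7(\phi_7(a,b))$ is $0$ if $7\nmid a+b$ and $1$ if $7\mid a+b$, so $v_7(\Delta(E_{a,b}))$ is $2$, resp.\ $4$; in either case the model is minimal at $7$ and $\Delta_{\min}=\Delta(E_{a,b})$ there. From $c_4^3-c_6^2=1728\,\Delta$ and $7\nmid1728$, when $v_7(\Delta(E_{a,b}))=4$ one cannot have $v_7(c_4(E_{a,b}))=1$ (else $v_7(c_6(E_{a,b})^2)=3$, which is odd), so $v_7(c_4(E_{a,b}))\ge2$ and in all cases $v_7(j(E_{a,b}))=3v_7(c_4)-v_7(\Delta)\ge0$: the reduction is potentially good. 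The formula $e=12/\gcd(v_7(\Delta_{\min}),12)$ then gives $e=6$ if $7\nmid a+b$ and $e=3$ if $7\mid a+b$.

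The prime $2$ is the delicate point, and I expect it to be the main obstacle. Coprimality of $a,b$ makes $\phi_7(a,b)$, the quartic factor of $c_4(E_{a,b})$ and the sextic factor of $c_6(E_{a,b})$ all odd --- reduce modulo $2$ in the two cases ``$a,b$ both odd'' and ``exactly one of $a,b$ even'' --- so $(v_2(c_4),v_2(c_6),v_2(\Delta))=(4,5,4)$ for \emph{every} admissible pair, the reduction at $2$ is additive, and it is potentially good because $v_2(j)=8\ge0$. The obstacle is that at $p=2$ this triple of valuations does not pin down the reduction type (for instance $(1,0)$ and $(1,2)$ have the same triple but conductor exponents $2$ and $3$), so one must run Tate's algorithm directly on \eqref{eq:EoverQ}, splitting into the subcases $4\mid ab$, $2\parallel ab$, $4\mid a+b$ and $2\parallel a+b$ (refining the congruences on $a,b$ modulo a small power of $2$ where required); Tate's algorithm then terminates with a Kodaira type and conductor exponent $\alpha$ as in \eqref{eq:cond_of_E_at_2}, and the semistability defect comes out $e=6$ when $ab$ is even, while when $ab$ is odd inertia at $2$ acts through a group isomorphic to $\SL_2(\F_3)$, giving $e=24$. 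These $2$-adic computations are also recorded in \cite[\S4.5.1.3]{kraus5} and \cite[\S3]{F}, which one may quote instead.
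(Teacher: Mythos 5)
Your overall strategy coincides with the paper's: read everything off the triple $(v_\ell(c_4),v_\ell(c_6),v_\ell(\Delta))$, get the defect at~$7$ from the denominator of $v_7(\Delta)/12$, quote Kraus for the defect at~$2$, and use Tate/Papadopoulos for the conductor exponent at~$2$. Your handling of the primes $\ell\nmid 14$ is a correct variant: you obtain $\ell\equiv 1\pmod 7$ from $ab^{-1}$ being a primitive $14$th root of unity modulo~$\ell$, and $\ell\nmid c_4$ from a resultant being a power of~$7$, whereas the paper writes $c_4=-2^4(A_1A_2+A_1A_3+A_2A_3)$ with $A_1A_2A_3=\pm 7\,\phi_7(a,b)$ and the $A_i$ pairwise coprime away from the prime above~$7$; these are equivalent in substance. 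The minimality argument and the analysis at~$7$ (including the parity trick forcing $v_7(c_4)\ge 2$ when $7\mid a+b$) are fine.

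The genuine gap is exactly where you predicted it: the case analysis at~$2$, which is the main content of the proposition. Since $(v_2(c_4),v_2(c_6),v_2(\Delta))=(4,5,4)$ for every admissible pair, the tables only tell you that you are in case~$3$, $4$ or~$5$ of Tate's classification (conductor exponent $4$, $3$ or~$2$), and your proposal stops at ``run Tate's algorithm in each subcase; it terminates with $\alpha$ as in~\eqref{eq:cond_of_E_at_2}'' --- that asserts the outcome rather than proving it. Some explicit verification distinguishing the subcases must be exhibited. The paper does this via \cite[Propositions~1 and~2]{papado}: for $ab$ even, the translation $(r,t)=(0,1)$ shows one is in a case $\ge 4$, and the congruence $b_8\equiv -(ab)^2\pmod 8$ then separates case~$5$ (so $\alpha=2$ when $4\mid ab$) from case~$4$ (so $\alpha=3$ when $2\parallel ab$); for $ab$ odd, the choice $(r,t)=(1,1)$ separates case~$3$ ($\alpha=4$ when $2\parallel a+b$) from case~$4$ ($\alpha=3$ when $4\mid a+b$). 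Your appeal to Kraus for $e=6$ versus $e=24$ at~$2$ is on the same footing as the paper, which likewise quotes \cite[p.~358]{kraus6}, so that part is acceptable. One smaller point: for the third bullet you only establish $v_\ell(\phi_7(a,b))=v_\ell(a^7+b^7)$; since $v_\ell(\Delta(E_{a,b}))=2v_\ell(\phi_7(a,b))$ for $\ell\ne 2,7$, the first equality displayed in that bullet is not what your computation (or the paper's) actually yields, so you should not claim to have derived it verbatim.
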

\begin{proof}
It follows from the remarks in Section~\ref{S:rks} that the model~\eqref{eq:EoverQ} is minimal at~$7$ with
\[
\left(v_7(c_4(E_{a,b})), v_7(\Delta(E_{a,b}))\right)=(\geq 2,4)\text{ or } (\geq 1,2)
\]
according to whether~$7\mid a+b$ or~$7\nmid a+b$. Hence~$E_{a,b}$ has additive potentially good reduction at~$7$; in particular, $v_7(N(E_{a,b}))=2$. Its semistability defect~$e$ at~$7$ is equal to the denominator of~$v_7(\Delta(E_{a,b}))/12$ (cf. \cite[p.~312]{Ser72}).

We check that~$(v_2(c_4(E_{a,b})),v_2(c_6(E_{a,b})), v_2(\Delta(E_{a,b})))=(4,5,4)$. Hence the model~\eqref{eq:EoverQ} of~$E_{a,b}$ is minimal at~$2$ and~$E_{a,b}$ has additive potential good reduction at~$2$. According to~\cite[p.~358]{kraus6}, its semistability defect~$e$ at~$2$ is~$6$ or~$24$ if~$ab$ is even or~$ab$ is odd respectively. We now compute the valuation at~$2$ of~$N(E_{a,b})$. By~\cite{papado}, we are in a case~$3$, $4$ or~$5$ in Tate's algorithm and hence $v_2(N(E_{a,b}))$ is~$4$, $3$ or~$2$ respectively. If~$ab$ is even then Proposition~1 in~\emph{loc. cit.} (applied with~$(r,t)=(0,1)$) shows that we are in a case~$\ge4$. Moreover, by Proposition~2 of \emph{loc. cit.} (and in its notation), we have~$b_8\equiv -(ab)^2\pmod{8}$ and hence we are in case~$5$ if~$4\mid ab$ and in case~$4$ if~$2\parallel ab$. Similarly, if~$ab$ is odd, then Propositions~1 and~2 of~\emph{loc. cit.} (applied with~$(r,t)=(1,1)$) show that we are in case~$3$ or~$4$ if~$2\parallel a+b$ or~$4\mid a+b$ respectively.

Let~$\ell\not=2,7$ be a prime of bad reduction. Then, $\ell\mid\phi_7(a,b)$ and, since, $\ell\nmid a+b$ (recall from Section~\ref{S:rks} that~\(\phi_7(a,b)\) and~\(a + b\) are coprime away from~\(7\)), we have~$\ell\equiv 1\pmod{7}$. Moreover, we check that~$c_4(E_{a,b})=-2^4(A_1A_2+A_1A_3+A_2A_3)$ where
\[
A_1=(\omega_3-\omega_2)f_1(a,b),\quad A_2=(\omega_1-\omega_3)f_2(a,b),\quad A_3=(\omega_2-\omega_1)f_3(a,b).
\]
Since $A_1$, $A_2$ and~$A_3$ are pairwise coprime away from~$\Fq_7$ and~$\ell$ divides~$A_1A_2A_3=7\phi_7(a,b)$, we see that~$E_{a,b}$ has bad multiplicative reduction at~$\ell$.
\end{proof}

Let~$p$ be a prime. Write~\(\rhobar_{E_{a,b},p}\) for the mod~\(p\) representation attached to~\(E_{a,b}/\Q\).

\begin{lemma}\label{lem:irredE}
Assume~$p\ge5$. Then the representation $\rhobar_{E_{a,b},p}$ is irreducible.
\end{lemma}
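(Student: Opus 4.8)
The plan is to rule out the standard list of possibilities for a reducible mod~$p$ representation of an elliptic curve over~$\Q$, exploiting that $E_{a,b}$ has multiplicative reduction at every bad prime $\ell \neq 2, 7$ together with the constraint $\ell \equiv 1 \pmod 7$ established in Proposition~\ref{P:conductorE}. If $\rhobar_{E_{a,b},p}$ is reducible, then after a quadratic twist we may assume $E_{a,b}$ (or more precisely a curve isogenous to a twist) has a $\Q$-rational point of order~$p$, or at least a $\Q$-rational $p$-isogeny; the characters on the diagonal are $\mu \chi_p$ and $\mu^{-1}$ for some character $\mu$ ramified only at $2$, $7$, and primes of bad multiplicative reduction (where it is at most tamely ramified and controlled). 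The first step is to invoke Mazur's theorem and its refinements (via the work of Kenku, and of Kraus, or the now-standard tables of rational points on $X_0(p)$ and $X_\mu(p)$): for $p \geq 17$ with $p \neq 37$, and in fact for the relevant small~$p$, the only elliptic curves over~$\Q$ admitting a rational $p$-isogeny have potentially multiplicative or CM-related reduction behaviour that we can directly contradict.

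First I would pin down the ramification of the isogeny character $\mu$. Since $E_{a,b}$ has good reduction away from $2 \cdot 7 \cdot \mathrm{rad}_7(\phi_7(a,b))$ and multiplicative reduction at each $\ell \mid \phi_7(a,b)$, $\ell \neq 7$, the character $\mu^2$ is unramified at all such~$\ell$ (by the theory of the Tate curve, on the inertia at a prime of multiplicative reduction the mod~$p$ representation is either trivial or given by the mod~$p$ cyclotomic character on the unipotent part, so each diagonal character is unramified there once $p \nmid v_\ell(\Delta)$; and $v_\ell(\Delta(E_{a,b})) = v_\ell(a^7+b^7)$ can be assumed prime-to-$p$ or handled by the fact that $\ell \equiv 1 \pmod 7$ forces $\ell$ large). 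Thus $\mu$ ramifies only at $2$, $7$, and $p$, so $\mu = \chi_p^i \cdot \psi$ with $\psi$ of conductor dividing a power of~$14$; by class field theory the order of such $\psi$ is very restricted (the relevant ray class groups are small), and $\mu^{12}$ is unramified outside~$p$.

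Next I would run the standard case analysis on the pair $(\mu|_{I_2}, \mu|_{I_7})$ using the explicit semistability defects from Proposition~\ref{P:conductorE}: at~$7$ the curve has potentially good reduction with $e = 3$ or $6$, and at~$2$ with $e = 6$ or $24$, which severely constrains the tame behaviour of $\mu$ at these primes (the character $\mu^2$ or $\mu^{e}$ must be unramified there). Combining the constraints at $2$, $7$ and the cyclotomic part, one finds that $\mu^{2}$ is either trivial or equals $\chi_p^{\pm 1}$ up to an unramified-everywhere (hence trivial) twist, i.e.\ $\rhobar_{E_{a,b},p}$ restricted to a small-index subgroup is reducible with diagonal characters in $\{1, \chi_p\}$. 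Then I would apply the standard contradiction à la Mazur/Kraus: such a curve would have a rational point of order~$p$ (or order~$p$ on a quadratic twist), forcing $p \leq 13$, and for $p = 5, 11, 13$ one eliminates the remaining finitely many isogeny classes by a direct computation with the multiplicative reduction type --- specifically, the existence of a prime $\ell \equiv 1 \pmod 7$ of multiplicative reduction with $p \mid v_\ell(\Delta)$ is needed for the isogeny character to be unramified at~$\ell$, which cannot hold for all such~$\ell$ simultaneously (one can produce, for any putative solution, a prime of multiplicative reduction where this fails, or invoke that $X_0(p)$ has no relevant rational points once the reduction types are fixed).

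The main obstacle will be the small primes $p = 5, 11, 13$: Mazur's theorem is not by itself enough, and one must genuinely combine the arithmetic of the bad primes ($\ell \equiv 1 \pmod 7$, multiplicative reduction, the valuation formula $v_\ell(\Delta) = v_\ell(a^7+b^7)$) with the local conditions at $2$ and $7$ to force a contradiction; a clean way is to observe that a reducible $\rhobar_{E_{a,b},p}$ together with these local conditions would make the associated isogeny character cut out an abelian extension of $\Q$ unramified outside $\{2, 7, p\}$ of a type that simply does not exist, or alternatively to note that $E_{a,b}[p]$ reducible would, by Darmon--Merel-type or Kraus-type arguments already present in the literature on $x^7 + y^7$, contradict primitivity of $(a,b,c)$. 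I expect the write-up to reduce this to a short citation (e.g.\ to \cite{F} or \cite{DahmenSiksek} or a Kraus-type lemma) rather than a full reproof, since irreducibility of this particular Frey curve's mod~$p$ representation for $p \geq 5$ is essentially known.
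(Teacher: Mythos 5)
Your proposal does not close the argument, and it misses the single observation that makes the paper's proof a one\--liner. The paper simply notes that by Proposition~\ref{P:conductorE} the curve $E_{a,b}$ has additive potentially good reduction at~$2$ with semistability defect $e=6$ or~$24$ --- in particular $3\mid e$ --- and then cites \cite[Corollaire~3.4]{billereyIrred}. The mechanism behind that criterion is exactly the point you gesture at but do not exploit: if $\rhobar_{E_{a,b},p}$ were reducible, its image would lie in a Borel subgroup, so $\rhobar_{E_{a,b},p}(I_2)$ would be an extension of a group of diagonal characters by a $p$-group; the diagonal characters are restrictions to $I_2$ of global characters of $G_\Q$, which by Kronecker--Weber factor through $(\Z/2^k\Z)^\times$ and hence have $2$-power order on $I_2$. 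Thus $\#\rhobar_{E_{a,b},p}(I_2)$ would be of the form $2^a p^b$, contradicting $3 \mid e = \#\rhobar_{E_{a,b},p}(I_2)$ for every $p\ge 5$ at once. Your write-up instead routes everything through Mazur-type results and the primes of multiplicative reduction, and this leaves genuine holes.

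Concretely: (i) you explicitly leave $p=5,11,13$ unresolved, hoping for a citation, and you never address $p=7$, which is included in the hypothesis $p\ge 5$ and for which $X_0(7)$ has infinitely many rational points, so no Mazur/Kraus-type global argument can work there --- only the local argument at~$2$ disposes of it; (ii) the lemma is stated for \emph{arbitrary} coprime $a,b$ (and is applied in Theorem~\ref{T:overQ} to $E_{1,0}$ and $E_{1,-1}$, for which $\phi_7(a,b)\in\{1,7\}$), so there need not exist any prime $\ell\equiv 1\pmod 7$ of multiplicative reduction; your plan to ``produce a prime of multiplicative reduction where unramifiedness fails'' therefore has no starting point in general; (iii) the parenthetical claim that unramifiedness of the diagonal characters at a multiplicative prime requires $p\nmid v_\ell(\Delta)$ is off --- by the Tate curve the diagonal characters are unramified at such $\ell$ unconditionally (the valuation condition governs the unipotent entry, i.e.\ level lowering, not the ramification of the characters). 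The fix is to replace the entire global case analysis by the local observation at~$2$ above, which is precisely what the cited Corollaire~3.4 of \cite{billereyIrred} packages.
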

\begin{proof} According to Proposition~\ref{P:conductorE}, the curve~$E_{a,b}$ has  additive potential good reduction at~$2$ with semistability defect $e=6$ or~$24$. It then follows from~\cite[Corollaire~3.4]{billereyIrred} that the representation~$\rhobar_{E_{a,b},p}$ is irreducible when $p \ge 5$.
\end{proof}

%%%%%%%%%%%%%%%%%%%%%
\subsection{Consequences on the solutions}
%%%%%%%%%%%%%%%%%%%%%

Let~\((a,b,c) \in \Z^3\) be a non-trivial primitive solution to~\eqref{E:77p} with exponent~\(n = p\) prime, \(p\geq 5\), \(p\neq7\) (see Section~\ref{S:rks}). Then, \(a\) and~\(b\) are coprime. Write~$E=E_{a,b}$ for simplicity and denote by~$N(\bar{\rho }_{E,p})$ the Serre level (i.e. the Artin conductor away from~$p$) of~$\bar{\rho}_{E,p}$. Since~$p \neq 7$, by Proposition~\ref{P:conductorE} and~\cite{kraus7}, then~$N(\bar{\rho}_{E,p})=2^\alpha\cdot7^2$ where~$\alpha$ is defined in~\eqref{eq:cond_of_E_at_2}.

For an integer $M > 0$, let $S_2(M)$ denote the set of cuspforms of weight~\(2\), trivial character and level~$M$. By the classical modularity and level-lowering theorems, there exist a newform $f \in S_2\left(2^\alpha\cdot7^2\right)$ and a prime~$\Fp \mid p$ in~$\Qbar$ such that
\begin{equation}
\bar{\rho}_{E,p} \simeq \bar{\rho}_{f,\Fp},
\label{E:iso}
\end{equation}
where~\(\bar{\rho}_{f,\Fp}\) denotes the reduction of the \(\Fp\)-adic representation attached to~\(f\).

The following theorem shows that this isomorphism cannot hold in many cases.

\begin{theorem}\label{T:overQ}
%Let $p = 5$ or $p \geq 11$ be a prime.
%Suppose there is a non-trivial primitive 
%solution~$(a,b,c)$ to equation~\eqref{E:77p} with exponent~$n = p$ and
%write $E = E_{a,b}$ for the Frey curve.
We are in one of the following situations:
\begin{enumerate}
 \item $4\mid ab$, \(7 \nmid a + b\) and~$\bar{\rho}_{E,p}\simeq\bar{\rho}_{E_{1,0},p}$~;
% \item\label{item2} $(2 \parallel ab$ or $4\mid a+b)$ and~$\bar{\rho}_{E,p}\simeq\bar{\rho}_{E_{1,-1},p}$.
 \item \(4 \nmid ab\), \(7 \mid a + b\) and~$\bar{\rho}_{E,p}\simeq\bar{\rho}_{E_{1,-1},p}$.
\end{enumerate}
%Moreover, we are in case~\eqref{item1} or in case~\eqref{item2} if $7\nmid a+b$ or~$7\mid a+b$, respectively.

In particular, equation~\eqref{E:77p} does not have any non-trivial primitive solution~$(a,b,c)$ with~$ab$ odd unless~$7\mid a + b$. 
\end{theorem}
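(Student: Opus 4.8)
The strategy is the classical modular method combined with an explicit analysis of the small-level cuspform spaces. Recall from the discussion preceding the statement that the mod~$p$ representation $\bar\rho_{E,p}$ (with $p \geq 5$, $p \neq 7$) is irreducible by Lemma~\ref{lem:irredE}, is modular by Wiles et al., and satisfies $\bar\rho_{E,p} \simeq \bar\rho_{f,\Fp}$ for some newform $f \in S_2(2^\alpha \cdot 7^2)$ by Ribet's level-lowering, where $\alpha \in \{2,3,4\}$ depends on the $2$-adic behaviour of $(a,b)$ as in~\eqref{eq:cond_of_E_at_2}. The plan is: first, enumerate the newforms in $S_2(2^\alpha \cdot 7^2)$ for each of the three possible values of $\alpha$; second, rule out all newforms with a nontrivial coefficient field, and then among the rational newforms rule out those not corresponding to one of the curves $E_{1,0}$ or $E_{1,-1}$; third, match the surviving possibility to the arithmetic constraint on $(a,b)$ it forces, which will pin down $7 \mid a+b$ or $7 \nmid a+b$ and hence $\alpha$; finally, deduce the last sentence as a special case.

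In more detail: For the elimination of newforms with large coefficient field, I would use a Kraus--Oesterl\'e style argument at an auxiliary prime $\ell$ of multiplicative (or good) reduction — for primes $\ell \not\equiv 1 \pmod 7$ the curve $E_{a,b}$ has good reduction at $\ell$ (by the last bullet of Proposition~\ref{P:conductorE}) so $a_\ell(E) \equiv a_\ell(f) \pmod{\Fp}$ with $|a_\ell(E)| \leq 2\sqrt\ell$, which for a newform $f$ whose coefficient field is not $\Q$ bounds $p$; one then checks the finitely many small $p$ separately (or arranges the auxiliary primes so that no $p \geq 5$ survives). For the rational newforms: each corresponds to an elliptic curve $E_f/\Q$ of conductor $2^\alpha \cdot 7^2$, and one compares $E_{a,b}$ against $E_f$ at auxiliary primes; the curves $E_{1,0}$ and $E_{1,-1}$ are precisely the ``trivial solution'' curves (coming from $b=0$, resp. $a+b=0$), and I expect the space $S_2(2^2\cdot 7^2)$ to contain (the isogeny class of) $E_{1,0}$ and $S_2(2^4 \cdot 7^2)$ to contain (the isogeny class of) $E_{1,-1}$, while all other rational newforms are eliminated by a congruence obstruction at a well-chosen prime (e.g. comparing $\Delta$-valuations, or using that $E_{a,b}$ has a rational $2$-torsion point forced by the model~\eqref{eq:EoverQ}, so $\bar\rho_{E,p}$ is reducible mod~$2$-structure incompatible with certain $E_f$, or simply a direct trace comparison). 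One must also check that the isomorphism $\bar\rho_{E,p} \simeq \bar\rho_{E_{1,0},p}$ (resp.\ $\bar\rho_{E_{1,-1},p}$) is genuinely possible, i.e.\ not itself eliminated — which it is not, since $(1,0,1)$ and $(1,-1,0)$ are (trivial) solutions.

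Once the surviving newform is identified, its Serre level forces $\alpha$: the curve $E_{1,0}$ (conductor $2^2 \cdot 7^2$) has $4 \mid ab$-type reduction, so $\alpha = 2$, which by~\eqref{eq:cond_of_E_at_2} means $4 \mid ab$; and $7 \mid \phi_7(a,b) \iff 7 \mid a+b$, but the Serre level at $7$ being exactly $7^2$ gives no condition there directly — instead the distinction $7 \mid a+b$ versus $7 \nmid a+b$ is read off from the semistability defect $e = 3$ or $6$ at $7$ (first bullet of Proposition~\ref{P:conductorE}), which is an invariant of $\bar\rho_{E,p}|_{I_7}$ and hence of the newform; matching $E_{1,0}$ (here $a+b = 1$, so $7 \nmid a+b$) forces $7 \nmid a+b$, giving case~(1), and matching $E_{1,-1}$ (here $a+b=0$) together with $\alpha = 4$ forces $2 \parallel a+b$ and $7 \mid a+b$, giving case~(2). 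Finally, the last sentence follows: if $ab$ is odd then $4 \nmid ab$, so case~(1) is impossible and we must be in case~(2), whence $7 \mid a+b$.

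The main obstacle will be the newform elimination step — there may be rational newforms at level $2^\alpha \cdot 7^2$ other than $E_{1,0}$ and $E_{1,-1}$ that are not killed by a single auxiliary-prime congruence, requiring either a more careful choice of auxiliary primes, an image-of-inertia argument at $2$ or $7$ (using the precise semistability defects $6$, $24$ at $2$ and $3$, $6$ at $7$ from Proposition~\ref{P:conductorE} to constrain $\bar\rho_{E,p}|_{I_2}$ and $\bar\rho_{E,p}|_{I_7}$), or a short computer verification with {\tt Magma} over the relevant finite sets of newforms and exponents; this is exactly the kind of computation referenced in the paper's electronic resources, so I would organize the elimination so that it reduces to such a finite check.
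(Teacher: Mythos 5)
Your overall strategy is the paper's: irreducibility (Lemma~\ref{lem:irredE}), modularity and level lowering give $\bar\rho_{E,p}\simeq\bar\rho_{f,\Fp}$ with $f\in S_2(2^\alpha\cdot 7^2)$; one eliminates newforms by comparing traces at auxiliary primes $q\not\equiv 1\pmod 7$ (where $E_{a,b}$ automatically has good reduction); the survivors are the forms of $E_{1,0}$ and $E_{1,-1}$; and the $7$-adic condition is read off from the semistability defect $e=3$ or $6$ at $7$, the $2$-adic condition from the level. All of that is correct and is what the paper does (in a few seconds of computation, at levels $2^2\cdot 7^2$ and $2^3\cdot 7^2$ only).

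The genuine gap is in your final matching step. The curve $E_{1,-1}$ has $ab=-1$ and $a+b=0$, so by \eqref{eq:cond_of_E_at_2} its conductor exponent at $2$ is $\alpha=3$ (the branch ``$4\mid a+b$''), not $\alpha=4$; it lies in $S_2(2^3\cdot 7^2)$, as the paper records. Hence the $2$-adic condition forced by its survival is $\alpha=3$, which for a primitive solution of \eqref{E:77p} is exactly $4\nmid ab$ (if $ab$ is odd then $2\mid a+b$, hence $2\mid c$, hence $2^p\mid(a+b)\phi_7(a,b)$ with $\phi_7(a,b)$ odd, so $8\mid a+b$ and in particular $4\mid a+b$) --- and not ``$2\parallel a+b$'' as you derive. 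Your version of case~(2) would be vacuous, since $2\parallel a+b$ never occurs for a solution, and would thus ``prove'' far more than the theorem claims; meanwhile at level $2^3\cdot 7^2$, where you expect complete elimination, the form of $E_{1,-1}$ cannot be killed by trace congruences (it matches the residue class $(x,y)=(1,q-1)$ at every auxiliary prime), so the computation you outline would not terminate as you predict. Relatedly, you never use the observation that $2\mid a+b\Rightarrow 8\mid a+b$, which is what lets the paper discard $\alpha=4$ at the outset and restrict to the two levels $2^2\cdot 7^2$ and $2^3\cdot 7^2$. Once the level of $E_{1,-1}$ is corrected and $\alpha=4$ is excluded, the rest of your argument (including the deduction of the final sentence from $ab$ odd $\Rightarrow 4\nmid ab$) goes through.
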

\begin{proof} Note that when $2 \mid a+b$ then $4 \mid a+b$ due to the shape of \eqref{E:77p}. In particular, from Proposition~\ref{P:conductorE} it follows that the conductor of $E$ at~$2$ is either $2^2$ or $2^3$.

From Proposition~\ref{P:conductorE}, for every prime~$q\not=2,7,p$ satisfying~$q\not\equiv 1\pmod{7}$ the curve ~$E$ has good reduction at~\(q\). Then, from 
the isomorphism~\eqref{E:iso}, there exists~$(x,y)\in\{0,\dots,q-1\}^2\backslash\{(0,0)\}$ such that~\(a_q(f)\equiv a_q(E_{x,y})\pmod{\Fp}\). 

Using these congruences for all such primes~$q\leq40$, we are able to contradict the isomorphism~\eqref{E:iso} for all newforms~$f$, except those corresponding to the elliptic curves~$E_{1,0}$ and~$E_{1,-1}$ in levels~$2^2\cdot7^2$ and~$2^3\cdot7^2$, respectively. These computations only take few seconds (see~\cite{programs} for details).

Now the statement about $7$-adic condition follows directly from the computation of the semistability defect at~$7$ in Proposition~\ref{P:conductorE}.
\end{proof}

%%%%%%%%%%%%%%%%%%%%%%%%%%%%%%%%%%%%%%%%%%%%%%%%%%%%%%
\section{A Frey curve over a totally real cubic field}
\label{S:overCubic}
%%%%%%%%%%%%%%%%%%%%%%%%%%%%%%%%%%%%%%%%%%%%%%%%%%%%%%

In this section, we extend the methods of~\cite{F}. The information provided here is used in different ways for our various proofs of Theorem~\ref{T:main} in the following sections.

Let~$a$ and~$b$ be coprime integers with~$a+b\not=0$. We use the notation of Section~\ref{S:rks} and consider the Frey curve $F_{a,b} \coloneq E_{(a,b)}^{(1,2)}$ as defined in \cite[p.~619]{F} given by the model
\begin{equation}\label{eq:FoverK7}
F_{a,b}\ : \ Y^2 = X(X-A_{a,b})(X+B_{a,b}),
\end{equation}
where
\begin{eqnarray*}
   A_{a,b} & = & (\omega_2 - \omega_1)(a + b)^2 \\
   B_{a,b} & = & (2 - \omega_2)(a^2 + \omega_1ab + b^2).
\end{eqnarray*}
The standard invariants of model~\eqref{eq:FoverK7} are
\begin{eqnarray*}
c_4(F_{a,b}) & = & 2^4(A_{a,b}^2 + A_{a,b}B_{a,b} + B_{a,b}^2), \\
c_6(F_{a,b}) & = & 2^5(2A_{a,b}^3 + 3A_{a,b}^2B_{a,b} - 3A_{a,b}B_{a,b}^2 - 2B_{a,b}^3), \\
\Delta(F_{a,b}) & = & 2^4\left(A_{a,b}B_{a,b}C_{a,b}\right)^2,
\end{eqnarray*}
where
\[
C_{a,b}\coloneq -(A_{a,b} + B_{a,b})=(\omega_1 -2)(a^2 + \omega_2ab + b^2).
\]
Note that $F_{a,b}$ has full $2$-torsion defined over $K$. 

Write $F = F_{a,b}$. For~$\delta\in \calO_K\backslash\{0\}$, a Weierstrass model for the quadratic twist~$F^{(\delta)}$ of~$F$ by~$\delta$ is given by
\begin{equation}\label{eq:FtwistedoverK7}
y^2 = x(x - \delta A_{a,b})(x + \delta B_{a,b}).
\end{equation}
Its standard invariants are given by
\[
c_4\big(F^{(\delta)}\big) = \delta^2 c_4(F),\qquad c_6\big(F^{(\delta)}\big) = \delta^3 c_6(F),\qquad \Delta\big(F^{(\delta)}\big) = \delta^6 \Delta(F).
\]
For a prime ideal~$\Fq$ in~$K$ and an ideal~$\mathfrak{a}$, we denote by~$v_\Fq(\mathfrak{a})$ the valuation at~$\Fq$ of~$\mathfrak{a}$. For an element~$\alpha\in\calO_K\backslash\{0\}$, we write~$v_\Fq(\alpha)$ for~$v_\Fq(\alpha\calO_K)$.  
\begin{proposition} 
Let~$\delta\in\calO_K$. Denote by~$N(F^{(\delta)})$ the conductor of~$F^{(\delta)}$. 
\begin{itemize}
\item If~$\delta$ is coprime to~$\Fq_7$, then we have
\begin{equation*}
v_{\Fq_7}\left(N(F^{(\delta)})\right) = \left\{ \begin{array}{ll}
1 & \quad \text{if $7 \mid a + b$},\\ 
2 & \quad \text{if $7 \nmid a + b$}.\\ 
\end{array} \right.
\end{equation*}

\item If~$\delta\equiv 1\pmod{\Fq_2^2}$, then we have
\begin{equation*}
v_{\Fq_2}\left(N(F^{(\delta)})\right) = \left\{ \begin{array}{ll}
0 & \quad \text{if $4 \parallel a + b$},\\ 
1 & \quad \text{if $8 \mid a + b$},\\ 
3 & \quad \text{if  $4 \mid ab$}, \\
4 & \quad \text{if $2\parallel a+b$ or~$2\parallel ab$}. \\
\end{array} \right.
\end{equation*}

\item If~$2\parallel ab$ and~$\delta\equiv \omega_2 \pmod{\Fq_2^2}$, then we have~$v_{\Fq_2}\left(N(F^{(\delta)})\right) = 3$.

\item If~$7 \nmid a + b$ and~$v_{\Fq_7}(\delta)$ is odd, then we have~$v_{\Fq_7}\left(N(F^{(\delta)})\right) = 0$.
\end{itemize}
Moreover, a prime ideal~$\Fq\nmid \Fq_2\cdot\Fq_7\cdot\delta$ is a prime of bad reduction for~$F^{(\delta)}$ if and only if it divides $(a + b)(a^2 + \omega_1ab + b^2)(a^2 + \omega_2ab + b^2)$. In that case, $F^{(\delta)}$ has bad multiplicative reduction at~$\Fq$ and we have
\[
v_\Fq\big(\Delta(F^{(\delta)})\big) = 
\left\{
\begin{array}{ll}
4v_\Fq(a^7 + b^7) & \text{if $\Fq\mid a + b$}, \\
2v_\Fq(a^7 + b^7) & \text{otherwise}. \\
\end{array}
\right.
\]
\label{P:conductorF}
\end{proposition}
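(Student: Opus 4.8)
The plan is to compute the conductor exponents of $F^{(\delta)}$ at the primes $\Fq_2$, $\Fq_7$, and at the primes $\Fq \neq \Fq_2, \Fq_7$ via Tate's algorithm, working from the twisted model~\eqref{eq:FtwistedoverK7} and the scaling relations $c_4(F^{(\delta)}) = \delta^2 c_4(F)$, $c_6(F^{(\delta)}) = \delta^3 c_6(F)$, $\Delta(F^{(\delta)}) = \delta^6 \Delta(F)$. First I would record the valuations of $A_{a,b}$, $B_{a,b}$, $C_{a,b}$ at each relevant prime: at $\Fq_7$ one uses that $(\omega_2 - \omega_1)$, $(2 - \omega_2)$, $(\omega_1 - 2)$ are all units times powers of $\Fq_7$ (in fact $\Fq_7 = (2 - \omega_i)$ up to units, so $v_{\Fq_7}$ of each linear factor is determined), together with the facts from Section~\ref{S:rks} that $f_i(a,b)$ are pairwise coprime away from $\Fq_7$, that $v_7(\phi_7(a,b)) = 1$ exactly when $7 \mid a+b$, and that $\gcd(a+b, \phi_7(a,b)) \in \{1, 7\}$. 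At $\Fq_2$ one notes $\Fq_2$ is inert or at least that $2$ is unramified in $K$ (discriminant $49$), so $v_{\Fq_2}$ agrees with $v_2$ on rational integers, and one analyzes the parities: $(a+b)^2$, $a^2 + \omega_i ab + b^2$, reducing mod powers of $\Fq_2$ and mod $\Fq_2^2$ as dictated by the hypotheses $\delta \equiv 1$ or $\delta \equiv \omega_2 \pmod{\Fq_2^2}$.

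For the $\Fq_7$-statements: when $7 \nmid a+b$ and $\delta$ is coprime to $\Fq_7$, the model~\eqref{eq:FtwistedoverK7} should already be minimal with $v_{\Fq_7}(\Delta) = v_{\Fq_7}(\delta^6 \cdot 2^4 (A B C)^2)$ having denominator-of-$/12$ equal to $6$, giving additive potentially good reduction and, via Tate, conductor exponent $2$; when $7 \mid a+b$ one instead gets $v_{\Fq_7}(\Delta)$ putting us in a case with exponent $1$ — this matches the Frey curve $E_{a,b}/\Q$ having $v_7(N) = 2$ always, but here the twist structure changes things, so I would double check against the potentially-good-reduction case list (Tate's types $II, III, IV, I_0^*$, etc.). The fourth bullet, $7 \nmid a+b$ and $v_{\Fq_7}(\delta)$ odd gives $v_{\Fq_7}(N) = 0$, reflects that twisting by an element of odd valuation at $\Fq_7$ removes the additive reduction entirely (the curve acquires good reduction), which follows from the semistability defect being $6$ at $\Fq_7$ combined with the explicit form of the twist — a quadratic twist by a uniformizer at a prime where the original has potentially good reduction with $6 \mid e$ can land on good reduction; I would verify by making the substitution to clear the power of $2 - \omega_2$ and checking the resulting model has unit discriminant at $\Fq_7$.

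For the $\Fq_2$-statements the work is entirely local 2-adic Kodaira-symbol bookkeeping: with $\delta \equiv 1 \pmod{\Fq_2^2}$ the model~\eqref{eq:FtwistedoverK7} has the same reduction type at $\Fq_2$ as $F$ itself (the twist is by a square mod $\Fq_2^2$, hence trivial on inertia at $\Fq_2$), so I reduce to analyzing $F = F_{a,b}$ at $\Fq_2$ across the four congruence regimes $4 \parallel a+b$, $8 \mid a+b$, $4 \mid ab$, $2 \parallel a+b$ or $2 \parallel ab$, running Tate's algorithm (or citing Papadopoulos-type tables as in the proof of Proposition~\ref{P:conductorE}) with a suitable translation; the case $4 \parallel a+b$ gives good reduction (exponent $0$), $8 \mid a+b$ gives multiplicative reduction (exponent $1$), and the two odd/even-times-2 cases give additive types with exponents $3$ and $4$. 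The third bullet, $2 \parallel ab$ and $\delta \equiv \omega_2 \pmod{\Fq_2^2}$, is the subtle one: here the nontrivial twist by $\omega_2$ changes the Kodaira type from the $\delta \equiv 1$ value of $4$ down to $3$, which I expect to require the most care — one must track how conjugating the additive reduction by the ramified-at-nothing-but-relevant quadratic character $\chi_{\omega_2}$ alters the conductor exponent, presumably via an explicit minimal model computation after the substitution $x \mapsto \omega_2 x$, $y \mapsto \omega_2^{3/2} y$ absorbed appropriately. The main obstacle throughout is the 2-adic analysis: $2$ is not totally split in $K$, the formulas for $A_{a,b}, B_{a,b}, C_{a,b}$ involve the algebraic numbers $\omega_1, \omega_2$, and getting the minimal model and Kodaira type right in the additive cases — especially distinguishing exponent $3$ from $4$ under the two choices of $\delta$ mod $\Fq_2^2$ — demands either a careful hand-run of Tate's algorithm over $K_{\Fq_2}$ or a machine verification; the multiplicative-prime part (last paragraph of the statement) is routine once one knows $F^{(\delta)}$ has multiplicative reduction there, with the valuation formula for $\Delta$ following immediately from $v_\Fq(\Delta(F^{(\delta)})) = v_\Fq(\delta^6 \cdot 2^4 (ABC)^2) = 2 v_\Fq(ABC)$ and the observation that $v_\Fq(A) = 2 v_\Fq(a+b)$ while $v_\Fq(B), v_\Fq(C)$ contribute $v_\Fq$ of the respective $f_i(a,b)$, combined with $\phi_7(a,b) \mid a^7 + b^7$ and the pairwise-coprimality away from $\Fq_7$.
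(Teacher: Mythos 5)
Your proposal follows essentially the same route as the paper: record the $\Fq_7$- and $\Fq_2$-valuations of $A_{a,b}$, $B_{a,b}$, $C_{a,b}$, use the scaling of $(c_4,c_6,\Delta)$ under twisting, settle the generic primes by the pairwise coprimality of $A,B,C$ away from $\Fq_7$, and run Tate's algorithm via Papadopoulos's tables at $\Fq_2$ and $\Fq_7$, with an explicit rescaling to exhibit good reduction when $v_{\Fq_7}(\delta)$ is odd. Two points deserve flagging. First, in the case $7\mid a+b$ you propose to check against the potentially-good-reduction types, but conductor exponent $1$ there comes from \emph{multiplicative} reduction: one finds $v_{\Fq_7}(A)=1+6v_7(a+b)$ and $v_{\Fq_7}(B)=v_{\Fq_7}(C)=1$, so $(v_{\Fq_7}(c_4),v_{\Fq_7}(\Delta))=(4,\ge 22)$, the model \eqref{eq:FtwistedoverK7} is non-minimal, and after minimalization $c_4$ becomes a unit; your first step would reveal this, but the framing as written points at the wrong case list. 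Second, the heart of the proposition --- deciding between conductor exponents $2$, $3$ and $4$ at $\Fq_2$ in the additive cases, and in particular why $\delta\equiv\omega_2\pmod{\Fq_2^2}$ drops the exponent from $4$ to $3$ when $2\parallel ab$ --- is asserted rather than derived; the paper resolves each subcase by applying Papadopoulos's Propositions 1--4 with explicitly chosen translations (e.g.\ $(r,t)=(1-\omega_1+\omega_1^2,-\omega_1)$, or $(r,s)=(0,1-\omega_1)$ to detect non-minimality when $4\mid a+b$), and some such explicit congruence check is unavoidable here. Your reduction of the $\delta\equiv 1\pmod{\Fq_2^2}$ case to $\delta=1$ via the unramifiedness of the twisting character at $\Fq_2$ is a legitimate shortcut the paper does not spell out.
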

\begin{proof} For simplicity, write~$A = A_{a,b}$, $B = B_{a,b}$, $C = C_{a,b}$ and denote by~$(c_4,c_6,\Delta)$ and $\big(c_4^{(\delta)},c_6^{(\delta)},\Delta^{(\delta)}\big)$ the standard invariants of~$F$ and~$F^{(\delta)}$ respectively.
We have that~$(a + b)^2$, $f_1(a,b)$, $f_2(a,b)$ and~$f_3(a,b)$ are pairwise coprime away from~$\Fq_7$ and so are~$A$, $B$ and~$C$ since~$\omega_2 - \omega_1$, $2 - \omega_2$ and~$\omega_1 -2$ all generate the prime ideal~$\Fq_7$. 

Let~$\Fq$ be a prime ideal such that $\Fq\nmid \Fq_2\cdot\Fq_7\cdot\delta$. Then~$\Fq$ divides~$\Delta^{(\delta)}$ if and only if it divides~$(a + b)(a^2 + \omega_1ab + b^2)(a^2 + \omega_2ab + b^2)$. In that case, $\Fq$ does not divide~$c_4^{(\delta)}$. Hence, $F^{(\delta)}$ has bad multiplicative reduction at~$\Fq$ and
\begin{align*}
v_\Fq\big(\Delta^{(\delta)}\big) = v_\Fq(\Delta)  & = 2v_{\Fq}\left((a+b)^2(a^2 + \omega_1ab + b^2)(a^2 + \omega_2ab + b^2)\right) \\
									 & =\left\{
									 \begin{array}{ll}
									4v_\Fq(a^7 + b^7) & \text{if $\Fq\mid a + b$} \\
									2v_\Fq(a^7 + b^7) & \text{otherwise} \\
									\end{array}
									 \right.
\end{align*}
according to the factorization~\eqref{eq:basic_factorization_case_r_equal_7}.

Let us compute the valuation of the conductor at~$\Fq_7$ under the assumptions of the statement.

Assume~$7\mid a+b$ and $\delta$ is coprime to~$\Fq_7$. Then, we have~$v_{\Fq_7}(A) = 1 + 6v_7(a + b)$ and~$v_{\Fq_7}(B) = 1$. Therefore, we have 
\[
\big(v_{\Fq_7}\big(c_4^{(\delta)}\big),v_{\Fq_7}\big(\Delta^{(\delta)}\big)\big) = \big(v_{\Fq_7}(c_4),v_{\Fq_7}(\Delta)\big) = \big(4,10+12v_7(a+b)\big).
\] 
In particular, the model~\eqref{eq:FtwistedoverK7} is not minimal at~$\Fq_7$, the curve~$F^{(\delta)}$ has bad multiplicative reduction at~$\Fq_7$ and  we have~$v_{\Fq_7}\left(N(F^{(\delta)})\right)=1$. 

Assume~$7\nmid a+b$. Then we have~$v_{\Fq_7}(A) = v_{\Fq_7}(B) = v_{\Fq_7}(C) = 1$. In particular, we have~\(v(c_4)\ge2\). If moreover~$\delta$ is coprime to~$\Fq_7$, then we have
\[
\big(v_{\Fq_7}\big(c_4^{(\delta)}\big), v_{\Fq_7}\big(\Delta^{(\delta)}\big)\big) = \big(v_{\Fq_7}(c_4), v_{\Fq_7}(\Delta)\big) = (\ge 2, 6).
\]
In particular, the model~\eqref{eq:FtwistedoverK7} is minimal at~$\Fq_7$, the curve~$F^{(\delta)}$ has bad additive reduction at~$\Fq_7$ and we have~$v_{\Fq_7}\left(N(F^{(\delta)})\right) = 2$.  

If instead we have~$v_{\Fq_7}(\delta)$ odd, then we have
\begin{align*}
\big(v_{\Fq_7}\big(c_4^{(\delta)}\big),v_{\Fq_7}\big(\Delta^{(\delta)}\big)\big) 
& = \big(2v_{\Fq_7}(\delta) + v_{\Fq_7}(c_4), 6v_{\Fq_7}(\delta) + v_{\Fq_7}(\Delta)\big) \\
& = \big(2v_{\Fq_7}(\delta) + v_{\Fq_7}(c_4), 6(v_{\Fq_7}(\delta) + 1)\big).
\end{align*}
Write~\(v_{\Fq_7}(\delta) = 2k + 1\). We have
\[
\big(v_{\Fq_7}\big(c_4^{(\delta)}\big),v_{\Fq_7}\big(\Delta^{(\delta)}\big)\big) = \big(v_{\Fq_7}(c_4) + 4k + 2, 12(k + 1)\big).
\]

Therefore, the model~\eqref{eq:FtwistedoverK7} is not minimal at~$\Fq_7$ (see~\cite[Tableau~I]{papado}). Applying the substitution $x \rightarrow x/u^{2(k + 1)}$, $y\rightarrow y/u^{3(k + 1)}$ with~\(u = 2 + \omega_1 - \omega_2\) (a generator of~\(\Fq_7\)) shows that the curve~$F^{(\delta)}$ has good reduction at~$\Fq_7$. Hence we have~$v_{\Fq_7}\left(N(F^{(\delta)})\right) = 0$.  

Let us now compute the valuation of the conductor at~$\Fq_2$ under the assumptions of the statement.

Assume first that~$2\nmid ab$ and~$\delta\equiv 1\pmod{\Fq_2^2}$. Then, we have $2\mid a + b$, $v_{\Fq_2}(A) = 2v_2(a + b)$ and~$v_{\Fq_2}(B) = v_{\Fq_2}(C) = 0$. Moreover~$\delta$ is coprime to~$\Fq_2$ and hence we have 
\begin{align*}
\big(v_{\Fq_2}\big(c_4^{(\delta)}\big),v_{\Fq_2}\big(c_6^{(\delta)}\big),v_{\Fq_2}\big(\Delta^{(\delta)}\big)\big) & = \big(v_{\Fq_2}(c_4),\, v_{\Fq_2}(c_6),\, v_{\Fq_2}(\Delta)\big) \\
& = \big(4,\, 6,\, 4+4v_2(a+b)\big). %= (4, 6, \ge 8). 
\end{align*}

If~$v_2(a+b)=1$, then the model~\eqref{eq:FtwistedoverK7} corresponds to a case $6$, $7$ or $8$ in Tate's classification with corresponding valuation~$v_{\Fq_2}\left(N(F^{(\delta)})\right) = 4$, $3$ or~$2$ (see~\cite[Tableau~IV]{papado}). Applying~\cite[Proposition~3]{papado} with, in its notation, $r = -2\omega_1$ and $t = -2\omega_1(1 - \omega_1)$ we get that we are in Case~$6$ in Tate's classification. Hence we have~$v_{\Fq_2}\left(N(F^{(\delta)})\right) = 4$ in this case. 

Similarly, if~$v_2(a+b)\ge2$, then the model~\eqref{eq:FtwistedoverK7} corresponds to either a case $7$ in Tate's classification (with Kodaira type $\mathrm{I}_\nu^*$ where $\nu = 4(v_2(a + b) - 1)\geq4$) or a non-minimal case (see~\cite[Tableau~IV]{papado}). We apply~\cite[Proposition~4]{papado} with $r = 0$ and~$s = 1 - \omega_1$ to conclude that we are in a non-minimal case. Therefore we have good reduction if~$v_2(a+b)=2$ and bad multiplicative reduction otherwise.

Assume now~$2\mid ab$ and~$\delta$ coprime to~$\Fq_2$. Then, $A$, $B$ and $C$ reduce to~$\omega_1 + \omega_2$, $\omega_2$ and~$\omega_1$ modulo~$\Fq_2$ respectively and we have
\[
\frac{c_4}{2^4}\equiv A^2 + AB + B^2\not\equiv0\pmod{\Fq_2}\quad\text{and}\quad\frac{c_6}{2^5}\equiv ABC\not\equiv0\pmod{\Fq_2}.
\]
Hence, we have 
$$
\big(v_{\Fq_2}\big(c_4^{(\delta)}\big),v_{\Fq_2}\big(c_6^{(\delta)}\big),v_{\Fq_2}\big(\Delta^{(\delta)}\big)\big) = \big(v_{\Fq_2}\big(c_4\big),v_{\Fq_2}\big(c_6\big),v_{\Fq_2}\big(\Delta\big)\big) = \big(4,\, 5,\, 4\big).
$$
Therefore, we are in Case~$3$, $4$ or $5$ of Tate's classification (see~\cite[Tableau~IV]{papado}) with corres\-ponding valuation~$v_{\Fq_2}\left(N(F^{(\delta)})\right) = 4$, $3$ or~$2$.

If~$\delta\equiv 1\pmod{\Fq_2^2}$, then applying~\cite[Proposition~1 and~2]{papado} with~$(r,t) = (1 - \omega_1 + \omega_1^2,-\omega_1)$, we get that we are in Case~$3$ or in Case~$4$ of Tate's classification if~$2\parallel ab$ or $4\mid ab$ respectively.

If~$2\parallel ab$ and~$\delta \equiv z^2 - 2\pmod{\Fq_2^2}$, applying~\cite[Proposition~1 and~2]{papado} with~$(r,t) = (-\omega_1 + \omega_1^2,1 - \omega_1 + \omega_1^2)$, we get that we are in Case~$4$ in Tate's classification. Hence we have~$v_{\Fq_2}\left(N(F^{(\delta)})\right) = 3$.
\end{proof}

\begin{remark}\label{rem:twist}
From Proposition~\ref{P:conductorF} we see that under some $2$-adic or $7$-adic conditions taking a quadratic twist of~$F$ by an appropriate~$\delta$ yields $F_{a,b}^{(\delta)}$ with smaller conductor. This conductor reduction  is essential to make the modular method using~$F_{a,b}$ computationally feasible
in Section~\ref{S:Frey7first}. 
Since this observation is likely to be useful in future applications of the modular method let
us briefly describe how we found 
the twist~$\delta = \omega_2$ in Table~\ref{table:delta} which according to Proposition~\ref{P:conductorF} reduces the conductor exponent at~$\Fq_2$ from 4 to~$3$ when~$2 \Vert ab$. Indeed, a standard calculation shows the semistability defect of $F_{1,1}$ at~$\Fq_2$ is $e=24$,
and so $F_{1,1}$  obtains good reduction 
over an extension $L/K_{\Fq_2}^{\unr}$ such that $\Gal(L/K_{\Fq_2}^{\unr}) \simeq \SL_2(\F_3)$,  and $F_{1,1}$ has an exceptional supercuspidal inertial type at~$\Fq_2$. In particular, no quadratic twist will reduce the size of inertia, however these exceptional types tend to appear (up to twist) at odd conductor exponents or large even conductor 
exponents (see \cite[Appendix]{dia95} for the case over~$\Q_2$) and since $v_{\Fq_2}\left(N(F_{1,1})\right) = 4$ we might still find a twist having lower odd conductor exponent. 
With the help of {\tt Magma}~\cite{magma} we can compute the $2$-Selmer group of~$K$ unramified outside~$2$ which allows to recover all quadratic extensions of $K$ ramified only at~$\Fq_2$. Each such extension corresponds to a possible~$\delta$ and we check the conductor of~$F_{1,1}^{(\delta)}$ until it lowers.
\end{remark}

Let~$p$ be a prime. Recall that~\(F = F_{a,b}\) and write~\(\rhobar_{F,p} : G_K\rightarrow\GL_2(\F_p)\) for the mod~\(p\) representation attached to~\(F/K\), where~\(G_K\) denotes the absolute Galois group of~\(K\).

The following result generalizes~\cite[Theorem~11.2]{F}.
\begin{proposition} \label{P:irredOverK}
Assume that~$p = 5$ or $p \geq 11$. Then, the representation $\rhobar_{F,p}$ is irreducible.
\end{proposition}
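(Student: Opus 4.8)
I will argue by contradiction. Suppose $\rhobar_{F,p}$ is reducible; since $p$ is odd, $F$ then has a $p$-isogeny defined over $K$, and I write $\theta\colon G_K\to\F_p^\times$ for the character giving the Galois action on the kernel and $\theta'=\chi_p\theta^{-1}$ for the one on the quotient, where $\chi_p$ is the mod~$p$ cyclotomic character; thus $\rhobar_{F,p}$ is an extension of $\theta'$ by $\theta$. The plan is to read off the ramification of $\theta$ from Proposition~\ref{P:conductorF}, deduce that $\theta^2$ (or $\theta'^2$) is unramified outside $p$, use the class number one and the units of $K=\Q(\zeta_7)^+$ to force $\theta$ (or $\theta'$) to be quadratic, and finally exploit that $F$ carries full $2$-torsion over the cubic field $K$ to reach a contradiction. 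Away from $\Fq_2$, $\Fq_7$ and the primes above $p$, Proposition~\ref{P:conductorF} gives $F$ good or multiplicative reduction, so (via the Tate curve, together with $\chi_p$ being unramified there) $\theta$ and $\theta'$ are unramified there. If $F$ has additive reduction at $\Fq_2$ — by Proposition~\ref{P:conductorF} this occurs unless $ab$ is odd and $4\mid a+b$ — its semistability defect there is $6$ or $24$ (as in Remark~\ref{rem:twist} when $ab$ is odd, and analogously to Proposition~\ref{P:conductorE} when $ab$ is even), and the irreducibility criterion of Billerey~\cite{billereyIrred} (or its totally real analogue), applied as in Lemma~\ref{lem:irredE}, already gives the statement for $p=5$ or $p\geq11$; when $e=24$ one may also note directly that $\rhobar_{F,p}(I_{\Fq_2})$ then contains a copy of $\SL_2(\F_3)$ acting irreducibly. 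So assume henceforth that $ab$ is odd and $4\mid a+b$; then $F$ has good or multiplicative reduction at $\Fq_2$, and $\theta$ is unramified at $\Fq_2$.

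Next, at $\Fq_7$ Proposition~\ref{P:conductorF} gives multiplicative reduction if $7\mid a+b$ (whence $\theta$ is unramified at $\Fq_7$) and additive reduction of Kodaira type $\mathrm{I}_0^*$ if $7\nmid a+b$, in which case $F$ acquires good reduction over a ramified quadratic extension, so $\rhobar_{F,p}(I_{\Fq_7})=\{\pm1\}$ and $\theta^2$ is unramified at $\Fq_7$. At a prime $\Fp\mid p$, since $p\neq2,7$, Proposition~\ref{P:conductorF} shows $F$ has good or multiplicative reduction; $F$ cannot be supersingular at $\Fp$, for then $\rhobar_{F,p}$ restricted to a decomposition group at $\Fp$ would be irreducible, contradicting reducibility, so $F$ is ordinary or multiplicative at $\Fp$ and the standard local description gives $\theta|_{I_\Fp}\in\{1,\chi_p|_{I_\Fp}\}$. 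Altogether $\psi\colonequals\theta^2$ is unramified outside $p$ and satisfies $\psi|_{I_\Fp}\in\{1,\chi_p^2|_{I_\Fp}\}$ for every $\Fp\mid p$.

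Now comes the class field theory step. Since $\Cl(K)=1$, the character $\psi$ factors through $\prod_{\Fp\mid p}k_{\Fp}^\times$ modulo the diagonal image of the global units $\calO_K^\times$. Let $S$ be the set of $\Fp\mid p$ with $\psi|_{I_\Fp}=\chi_p^2|_{I_\Fp}$; triviality of $\psi$ on $\calO_K^\times$ reads $\prod_{\Fp\in S}N_{k_{\Fp}/\F_p}(\bar u)^2=1$ in $\F_p^\times$ for every unit $u$. Since $\prod_{\Fp\mid p}N_{k_{\Fp}/\F_p}(\bar u)=N_{K/\Q}(u)\equiv\pm1\pmod p$, a proper nonempty $S$ would force (after possibly replacing $S$ by its complement) the reduction $\bar u$ to lie in $\{\pm1\}$ for every unit $u$ at some split prime $\Fp\mid p$; but taking $u=\omega_1$ this would make $\Fp$ divide one of $\omega_1-1$, $\omega_1+1$, which are units of $K$ — absurd. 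Hence $S=\varnothing$ or $S$ is the set of all primes above $p$, so $\theta^2$ or $\theta'^2$ is unramified everywhere, hence (again using $\Cl(K)=1$) equal to $1$.

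Suppose finally $\theta^2=1$ (the case $\theta'^2=1$ is identical after replacing $F$ by its $p$-isogenous curve, which again has full $2$-torsion over $K$). Then the quadratic twist $F^{(\theta)}$ is defined over $K$, still has full $2$-torsion over $K$, and satisfies $\rhobar_{F^{(\theta)},p}\simeq\bigl(\begin{smallmatrix}1&*\\0&\chi_p\end{smallmatrix}\bigr)$, so it has a $K$-rational point of order $p$; thus $F^{(\theta)}(K)$ contains a subgroup isomorphic to $\Z/2\Z\times\Z/2p\Z$. As $K$ is cubic and $p\geq5$, this contradicts the classification of torsion subgroups of elliptic curves over cubic fields, and the proposition follows. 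The main difficulty here is the class field theory step — ruling out the configuration in which $\theta$ is cyclotomic at some but not all primes above $p$ — which goes through uniformly in $p$ precisely because $K$ has class number one and $\omega_1\pm1$ are units; by contrast the local analysis away from $p$ and the endgame, once $\theta$ is known to be quadratic, are routine.
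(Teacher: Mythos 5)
Your overall architecture (reducibility gives characters of small conductor, class field theory over $K$ forces one of them to be quadratic, and a quadratic twist then acquires a $K$-rational $p$-torsion point alongside the full $2$-torsion) is the same as the paper's, and your middle step is genuinely nice: exploiting that $\omega_1-1$ and $\omega_1+1$ are units of $K$ to exclude the mixed ramification pattern at the primes above $p$ works uniformly in $p$, whereas the paper's appeal to \cite{FS} yields $B=7\cdot 13^3$ and forces a separate, more laborious treatment of $p=13$. The fatal problem is your endgame at $p=5$, which is explicitly part of the statement. The group $\Z/2\Z\times\Z/10\Z$ \emph{does} occur as a torsion subgroup of elliptic curves over cubic fields --- it belongs to the list of structures arising infinitely often over cubic fields (Jeon--Kim--Schweizer), and it survives in the complete classification --- so exhibiting $\Z/2\Z\times\Z/2p\Z\subseteq F^{(\theta)}(K)$ yields no contradiction when $p=5$ (it would also fail for $p=7$, which is luckily excluded). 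Some input specific to $K=\Q(\zeta_7)^+$ is unavoidable here; the paper supplies it by noting that full $2$-torsion plus a rational $5$-isogeny gives a $K$-point on $X_0(20)$ and that $X_0(20)(K)$ consists only of cusps, by the computation in the proof of \cite[Lemma~10.4]{AnniSiksek}. For $p=11$, $p=13$ and $p\geq17$ your appeal to the cubic torsion classification is legitimate (for $p>13$ the paper uses the lighter criterion of \cite{BruNaj} instead).

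A secondary weakness is your disposal of the cases where $F$ has additive reduction at $\Fq_2$. You assert the semistability defect there is $6$ or $24$, but the paper establishes $e=24$ only for $F_{1,1}$ (Remark~\ref{rem:twist}), and the criteria of \cite{kraus6} invoked in Proposition~\ref{P:conductorE} and Lemma~\ref{lem:irredE} concern curves over $\Q_2$, whereas $F$ lives over the unramified cubic extension $K_{\Fq_2}$ and is not a base change from $\Q_2$; moreover \cite[Corollaire~3.4]{billereyIrred} is a statement about curves over $\Q$, and for $e=6$ its ``analogue'' needs the (true, but unstated) fact that $I_{\Fq_2}^{\mathrm{ab}}$ has no element of order $3$ because the residue field is $\F_8$ and wild inertia is pro-$2$. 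These points are plausibly repairable, but as written they are assertions, not proofs. The paper avoids the issue entirely: it never splits on the reduction type at $\Fq_2$, instead bounding the conductor of $\theta$ at $\Fq_2$ by $\Fq_2^{2}$ via \cite[Theorem~5.1]{Jarv2} and the theory of the Tate curve, and then absorbing this into a ray class group of modulus $\Fq_2^2\Fq_7\infty_1\infty_2\infty_3$, which is still $2$-elementary. You would be well advised to adopt that route and keep your units argument for the ramification at $p$; but the $p=5$ case still requires the $X_0(20)$ computation.
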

\begin{proof}  Suppose $\rhobar_{F,p}$ is reducible, that is,
\[ \rhobar_{F,p} \sim \begin{pmatrix} \theta & \star\\ 0 & \theta' \end{pmatrix} 
\quad \text{with} \quad \theta, \theta' : G_K \rightarrow \F_p^* 
\quad \text{satisfying} \quad \theta \theta' = \chi_p.\]

The characters $\theta$ and $\theta'$ ramify only at $p$ and additive primes of $F$; the latter can be
$\Fq_{7}$ and $\Fq_2$ according to Proposition~\ref{P:conductorF}. When these primes are 
of potentially good reduction, \cite[Theorem~5.1]{Jarv2} implies there is no conductor degeneration mod~$p$. Moreover, in case~$\Fq_2$ or~$\Fq_7$ is of additive potentially multiplicative reduction, it follows from the theory of the Tate curve that 
the conductor of $\rhobar_{F,p}$ at such prime is the same as that of~$F$ (even when $\star = 0$ on inertia).
Therefore, at an additive prime $\Fq$ 
both $\theta$, $\theta'$ have conductor 
exponent equal to $\vv_\Fq(N_F)/2$.
We conclude that the possible conductors 
away from~$p$ for 
$\theta$ and $\theta'$ are divisors of $\Fq_2^2 \Fq_{7}$.

Recall from Section~\ref{S:rks} that the unit group of~$K$ is generated by $\{-1, \epsilon_1, \epsilon_2\}$ where $\epsilon_1 = -\omega_1^2 + 1$ and $\epsilon_2 = \omega_1 + 1$. In the notation of \cite[Theorem~1]{FS}, we compute $B = 7 \cdot 13^3$. Thus from the first paragraph of the proof of \cite[Theorem~1]{FS} we conclude that for $p=11$ and $p \geq 17$ exactly one of $\theta$, $\theta'$ ramifies at~$p$. So, replacing $F$ by a $p$-isogenous curve if necessary, we can assume $\theta$ is unramified at $p$. We conclude that the conductor of $\theta$ divides $\Fq_2^2 \Fq_{7}$.

Let $\infty_1$, $\infty_2$ and $\infty_3$ be the real places of $K$.
The ray class group for modulus $\Fq_2^ 2 \Fq_{7} \infty_1 \infty_2 \infty_3$
is isomorphic to
\[
\Z/2\Z \oplus  \Z/2\Z \oplus \Z/2\Z \oplus \Z/2\Z.
\]
Thus $F$ has a quadratic twist with a $p$-torsion point defined over $K$.  From the proof of \cite[Theorem 4]{BruNaj}, we see that this is impossible for $p > 13$, concluding the proof in this case.

It remains to deal with~\(p = 5\), \(11\) and~\(13\).

For $p = 5$, the elliptic curve~\(F\) gives rise to a $K$-rational point on~$X_0(20)$ since $F$ has full $2$-torsion over~$K$. By the proof of \cite[Lemma 10.4]{AnniSiksek}, the $K$-rational points on $X_0(20)$ are cuspidal. Hence the result for~\(p = 5\).

For $p = 11$, the elliptic curve~$F$ gives rise to a $K$-rational point on $X_0(11)$ and by \emph{loc. cit.} it corresponds to an elliptic curve with integral $j$-invariant. According to the last statement of Proposition~\ref{P:conductorF}, it then follows that~\(a^7 + b^7 = \pm 2^u\cdot7^v\) for some non-negative integers~\(u,v\). Since~\(\phi_7(a,b)\) is odd and satisfies~\(v_7(\phi_7(a,b))\leq 1\) (see Section~\ref{S:rks}), we get that
\[
\phi_7(a,b) = \pm1\quad\text{or}\quad \phi_7(a,b) = \pm7.
\]
As~\(a + b \neq 0\) and \(\phi_7(a,b) = \frac{a^7 + b^7}{a + b}\) only takes positive values, it follows from~\cite[Proposition~2.5]{F} that
\[
(a,b)\in\{\pm(1,1), (\pm1,0), (0,\pm1)\}.
\]
Let~\((a,b)\) be one of these six pairs. We check that the elliptic curve~\(F_{a,b}\) has good reduction at~\(\Fq_3\). Moreover, we have~\(a_{\Fq_3}(F_{a,b})  = \pm4\) and hence the characteristic polynomial~\(X^2 \pm4X + 27\) of~\(\overline{\rho}_{F_{a,b},11}(\Frob_{\Fq_3})\) in~\(\F_{11}[X]\) is irreducible. Therefore, none of the curves~\(F_{a,b}\) for~\((a,b)\) as above, has a \(K\)-rational isogeny of degree~\(11\). This rules out the case~\(p = 11\).

Finally, suppose $p = 13$. If one of $\theta$ or $\theta'$ is unramified at all the primes in $K$ above $13$, 
replacing $F$ by a $13$-isogenous curve if necessary, we can assume 
again the conductor of $\theta$ divides $\Fq_2^2 \Fq_{7}$. Arguing with ray class groups as above, we conclude that, up to quadratic twist, $F$ has a $13$-torsion point over~$K$; since it also 
has full $2$-torsion this gives rise to a $K$-rational point on $X_1(26)$.
By the proof of \cite[Lemma 10.4]{AnniSiksek}, this is not possible.
Then, both $\theta$ and $\theta'$ ramify at some prime above $13$. 

Let $\Fp_1$, $\Fp_2$, $\Fp_3$ denote the primes of $K$ dividing~$13$.
Since $13$ is unramified in $K$ and $F$ is semistable at the $\Fp_i$ 
it follows that one of  $\theta$, $\theta'$ ramifies at one $\Fp_i$ and the other 
at the other two primes (see \cite[Lemme~1]{Kraus3} and its proof). 
So, replacing $F$ by a $13$-isogenous curve if necessary, we may assume $\theta$ 
is ramified at exactly one prime above 13, say $\Fp_j$. Furthermore, since $F$ has either 
multiplicative or good ordinary reduction at $\Fp_j$, we also know that 
$\theta|_{I_{\Fp_j}} = \chi_{13}|_{I_{\Fp_j}}$. Thus the field cut out by $\theta$ has 
modulus dividing $\Fq_2^2 \Fq_7 \Fp_{j}$.

The ray class groups of modulus $\Fq_2^2 \Fq_7 \Fp_{1}^{e_1} \Fp_{2}^{e_2} \Fp_{3}^{e_3} \infty_1 \infty_2 \infty_3$, where $e_1 + e_2 + e_3 = 1$ are all isomorphic to $(\Z/2\Z)^5$. Therefore, up to a quadratic twist, 
$F$ gives again a $K$-rational point on $X_1(26)$, which is ruled out as above.

For details on the computations used in this proof, see~\cite{programs}.
\end{proof}

%%%%%%%%%%%%%%%%%%%%%%%%%%%%%%%%%%%%%%%%%%%%%%%%%%%%%%
\section{A first proof of Theorem~\ref{T:main} using twists of Frey curves} 
\label{S:Frey7first}
%%%%%%%%%%%%%%%%%%%%%%%%%%%%%%%%%%%%%%%%%%%%%%%%%%%%%%

In this section, we prove Theorem~\ref{T:main} using solely the Frey curve~\(F\) defined in the previous section and its twists. In Remark~\ref{rk:EandF}, we also discuss how the information from the Frey curve~$E$ obtained in Theorem~\ref{T:overQ} can be used to give an alternative proof of about the same total running time but with some memory-saving (see Remark~\ref{rk:EandF}).

Let~\((a,b,c)\) be a non-trivial primitive solution to equation~\eqref{E:77p} for exponent~\(n = p\) prime, \(p\ge 5\), \(p\neq 7\) (see Section~\ref{S:rks}). Then, \(a\) and~\(b\) are non-zero coprime integers such that~\(a + b\neq 0\).

Keeping the notation of Sections~\ref{S:rks} and~\ref{S:overCubic}, write again~$F=F_{a,b}$ for simplicity and define~$\delta\in\calO_K\backslash\{0\}$ as in Table~\ref{table:delta}. Note that~$\delta$ is either a unit  or $7$ times a unit in~$\calO_K$.

\begin{table}[h]
\begin{tabular}{|c|c|c|}
\hline
 & $7\nmid a + b$  & $7\mid a + b$ \\
\hline
$2\nmid ab$ & $-7$ & $1$ \\
\hline
$2\parallel ab$ & $-7\omega_2$ & $\omega_2$ \\
\hline
$4\mid ab$ & $-7$ & $1$ \\
\hline
\end{tabular}
\caption{Values of~$\delta$}\label{table:delta}
\end{table}
Let~$N(\bar{\rho}_{F^{(\delta)},p})$ be the Serre level of the mod~$p$ representation~$\bar{\rho}_{F^{(\delta)},p}$ associated with~$F^{(\delta)}$ (i.e., the prime-to-$p$ part of the Artin conductor). Note that when $2 \mid a+b$ then $8 \mid a+b$ due to the shape of~\eqref{E:77p}.  According to Proposition~\ref{P:conductorF} (recall that $3 \mid a+b$ and $p \neq 7$), we have~$N(\bar{\rho}_{F^{(\delta)},p}) = \Fq_2^s \Fq_3 \Fq_7^t$ with 
\begin{equation}\label{eq:cond_of_F_delta_at_2_and_7}
s = \left\{ \begin{array}{ll}
1 & \quad \text{if $2 \nmid ab$},\\ 
3 & \quad \text{if  $2 \mid ab$}, \\
\end{array} \right.
\quad\text{and}\quad
t = \left\{ \begin{array}{ll}
0 & \text{if $7 \nmid a + b$}, \\
1& \text{if $7 \mid a + b$}. \\
\end{array} \right.
\end{equation}

Let $S_2(N(\bar{\rho}_{F^{(\delta)},p}))$ denote the space of Hilbert cuspforms of level~$N(\bar{\rho}_{F^{(\delta)},p})$, parallel weight~2 and trivial character. The curve $F$ is modular by~\cite[Corollary 6.4]{F} and hence so is~$F^{(\delta)}$. (Note that we now know that all elliptic curves over totally real cubic fields are modular by the work of Derickx, Najman, and Siksek \cite{DNS20}.) Moreover,  by Proposition~\ref{P:irredOverK}, the representation~$\rhobar_{F,p}$ is irreducible and hence so is~$\rhobar_{F^{(\delta)},p}$. An application of level lowering theorems for Hilbert modular forms (see~\cite{Fuj,Jarv,Raj}), implies that there exists a Hilbert newform $g \in S_2(N(\bar{\rho}_{F^{(\delta)},p}))$ 
such that for a prime~$\Fp \mid p$ in~$\Qbar$ 
we have
\begin{equation}
\rhobar_{F^{(\delta)},p} \simeq \rhobar_{g,\Fp},
\label{E:iso2}
\end{equation}
where~\(\bar{\rho}_{g,\Fp}\) denotes the reduction of the \(\Fp\)-adic representation attached to~\(g\).

In the sequel, we show that this isomorphism is impossible, hence proving Theorem~\ref{T:main}.

Let $q\not=2,3,7,p$ be a rational prime.

From Proposition~\ref{P:conductorF} and~\eqref{E:iso2}, there exists~$(x,y)\in\{0,\dots,q-1\}^2\backslash\{(0,0)\}$ such that 
$(a,b) \equiv (x,y) \pmod{q}$
and, for all prime ideals~$\Fq$ above~$q$ in~$K$, we have
\begin{equation}\label{eq:congruences}
a_\Fq(g)\equiv\left\{
\begin{array}{lll}
a_\Fq\big(F_{x,y}^{(\delta)}\big) & \pmod{\Fp} & \text{if $F_{x,y}^{(\delta)}$ has good reduction at~$\Fq$}, \\
\pm \left(N(\Fq)+1\right) & \pmod{\Fp} & \text{otherwise}.
\end{array}
\right.
\end{equation}
If we denote by~$\mathcal{O}$ the integer ring of the coefficient field of~$g$ and by~$b_\Fq^{(\delta)}(x,y)$ the right hand side of this congruence, then in particular
\begin{equation}\label{eq:divisibility}
p\mid \prod_{\substack{0\leq x,y\leq q-1 \\ (x,y)\not=(0,0)}}\gcd\left(\Norm_{K_g/\Q}\left((a_\Fq(g)-b_\Fq^{(\delta)}(x,y))\mathcal{O}~;\ \Fq\mid q\right)\right).
\end{equation}
We split the proof of Theorem~\ref{T:main} into two parts according to whether $7$ divides~$a + b$ or not. Part~\ref{item:partA} and item~(\ref{item:partBi}) require less than a minute of computations in total. For items~(\ref{item:partBii}) and~(\ref{item:partBiii}) the relevant space of newforms is the same. Its initialization in~\texttt{magma} requires about~\(35\) minutes, whereas the elimination procedure  takes approximatively~\(1.3\) minute in each case (see~\cite{programs} for the code and output files).

\begin{enumerate}[A.]
\item\label{item:partA} Assume~$7\nmid a + b$. Then, $t = 0$ and hence we have~$N(\bar{\rho}_{F^{(\delta)},p}) = \Fq_2\Fq_3$ or $\Fq_2^3\Fq_3$ if $2\nmid ab$ or~$2\mid ab$ respectively. Notice that when~$2\mid ab$, the values of~$\delta$ depends on the valuation at~$2$ of~$ab$ though.
\begin{enumerate}[(i)]
\item\label{item:partAi} When~$2\nmid ab$, we have~$\delta = -7$ and the form~$g$ has level~$\Fq_2 \Fq_3$. There are two newforms in~$S_2(\Fq_2 \Fq_3)$. Using the divisibility relation~\eqref{eq:divisibility} with prime ideals above $q = 5$, $13$, and $29$ we eliminate both newforms.

\item When~$2\parallel ab$, we have~$\delta = -7\omega_2$ and the form~$g$ has level~$\Fq_2^3 \Fq_3$. There are $47$ newforms in~$S_2(\Fq_2^3 \Fq_3)$. Using the divisibility relation~\eqref{eq:divisibility} with prime ideals above $q = 13$, $29$, and~$41$ we eliminate all these newforms, except the exponent $p = 5$ which survives for one form.  %Form no 24
We discard it using instead the congruences~\eqref{eq:congruences} themselves (a method we refer to as `refined elimination' in~\cite{BCDF2}) with auxiliary prime $q = 13$.

\item\label{item:partAiii} When~$4\mid ab$, we have~$\delta = -7$ and the form~$g$ has again level~$\Fq_2^3 \Fq_3$. Using the divisibility relation~\eqref{eq:divisibility} with prime ideals above $q = 5$, $13$, $29$, and~$41$ we eliminate all the $47$ newforms in~$S_2(\Fq_2^3 \Fq_3)$ for $p \ge 5$, except the exponent $p = 5$ which survives for one form. %Form no 24
 We discard it using again refined elimination with auxiliary primes $q = 13$.
\end{enumerate}
Therefore we have proved the result when~$7$ does not divide~$a + b$. 

\item\label{item:partB} Let us now assume~$7\mid a + b$. 
\begin{enumerate}[(i)]
\item\label{item:partBi} When~$2\nmid ab$, we have~$\delta = 1$ and the form~$g$ has level~$\Fq_2 \Fq_3\Fq_7$. There are five newforms in $S_2(\Fq_2 \Fq_3 \Fq_7)$. Using the divisibility relation~\eqref{eq:divisibility} with prime ideals above $q = 5$, $13$, $29$, and~$41$ we 
eliminate all the five newforms for $p \geq 7$ and $p \neq 13$. The exponents $p=5,13$ survive simultaneously for one form $\Ff$
with field of coefficients $\Q_{\Ff}$ equal to the real cubic subfield of $\Q(\zeta_{13})$. %Form no 3
Using refined elimination with auxiliary prime $q = 29$ we eliminate~$\Ff$ for $p=5$. An application of \cite[Theorem~2.1]{Martin} shows there is a 
newform $f \in S_2(\Fq_2\Fq_3\Fq_7)$ and a prime $\Fp' \mid 13$ in its coefficient
field such that $a_\Fq(f) \equiv \Norm(\Fq) + 1 \pmod{\Fp'}$ for all $\Fq \nmid 2\cdot 3 \cdot 7 \cdot 13$.
By comparing the traces for any prime ideal~$\Fq$ above~$29$, we conclude that $f=\Ff$ and $\Fp' = \Fp$ is the unique ideal in $\Q_\Ff$ above~13.
Therefore, the representation $\rhobar_{\Ff,\Fp}$ is reducible and, 
since there is no other ideal above~13 in~$\Q_\Ff$, we cannot have \eqref{E:iso2}
with $g = \Ff$ when $p=13$. (For an argument of this type in a more classical setting, see~\cite[Ex.~2.9]{HalberstadtKraus}.) This completes the proof when~$2\nmid ab$. 

\item\label{item:partBii} When~$2\parallel ab$, we have~$\delta = \omega_2$ and the form~$g$ is one the $121$ newforms in the \(818\)-dimensional new subspace of~$S_2(\Fq_2^3 \Fq_3 \Fq_7)$. We perform standard elimination for all the possible forms using the primes above $q = 5$, $13$, \(29\), and $41$. We discard this way all the newforms for $p \geq 7$. The exponent $p = 5$ survives for two forms %Forms no 56 and 79
and refined elimination with $q = 83$ (for one of them) %Form no 56
or~$q = 29$ (for the other one) % form no 79
deals with it. 

\item\label{item:partBiii} When~$4\mid ab$, we have~$\delta = 1$ and the form~$g$ has again level~$\Fq_2^3 \Fq_3\Fq_7$. Standard elimination using the primes above $q = 5$, $13$, \(29\), and $41$ eliminates all the possible forms except three forms for the exponent~$p= 5$  % Forms no 56, 79, and 120
which are dealt with using refined elimination with~$q = 83$, $41$, and~$29$ respectively.%for forms no 56, 79 and 120 respectively.
\end{enumerate}
Therefore we have proved the result for~$7\mid a + b$ as well. 
\end{enumerate} 
This finishes the proof of Theorem~\ref{T:main} using only the curve~\(F\). The total running time is approximatively~\(40\) minutes (see~\cite{programs}).

\begin{remark}\label{rk:EandF}
Alternatively, we can also combine information from both curves~$E$ and~$F$. From Theorem~\ref{T:overQ}, which uses Frey curve $E$, to prove Theorem~\ref{T:main} it suffices to deal with items~(\ref{item:partAiii}), (\ref{item:partBi}), and (\ref{item:partBii}) above. We note that the running time is essentially the same (and around~\(40\) minutes), but this proof requires less memory (1321.78MB vs. 947.59MB). In the next sections, we shall give proofs of Theorem~\ref{T:main} which avoid the most time-consuming step of the current approach, namely the computation of the space used in item~(\ref{item:partBii}) (and item~(\ref{item:partBiii})) below.

\end{remark}

%%%%%%%%%%%%%%%%%%%%%%%%%%%%%%%%%%%%%%%%%%%%%%%%%%%%%%
\section{Two multi-Frey approaches to  Theorem~\ref{T:main} using Frey abelian varieties} 
\label{S:Frey7}
%%%%%%%%%%%%%%%%%%%%%%%%%%%%%%%%%%%%%%%%%%%%%%%%%%%%%%

In this section, we will summarize the coming proofs of Theorem~\ref{T:main} and their contributions. We use the notation from Section~\ref{S:rks}.

The objective of our first `second-type' proof is to use higher dimensional Frey varieties introduced in~\cite{xhyper_vol1} `as much as possible'. More precisely, we shall use the properties of the abelian variety~$J/K$ constructed by Kraus (see~\S\ref{ss:recap} for a review of the definition and the useful results on~\(J\) from~\cite{xhyper_vol1}) to prove the following theorem.
\begin{theorem}\label{T:overQvariety}
 Let $p = 5$ or $p \geq 11$ be a prime. Then, 
the equation~\eqref{E:77p} with exponent~$p$
has no non-trivial primitive solutions $(a,b,c)$ 
satisfying $a \equiv 0 \pmod{2}$ and $b \equiv 1 \pmod{4}$.
\end{theorem}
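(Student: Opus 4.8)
\emph{Sketch of proof.} The plan is to run the modular method over~$K$ with the Frey abelian variety $J = J(a,b)$ attached to a putative solution, using the structural results of~\cite{xhyper_vol1} recalled in~\S\ref{ss:recap} and calling on the Frey elliptic curves~$E_{a,b}$ and~$F_{a,b}$ only to pin down the shape of the residual representation. By Section~\ref{S:rks} we may assume $n = p$ is prime with $p\geq 5$ and $p\neq 7$; let $(a,b,c)$ be a non-trivial primitive solution to~\eqref{E:77p} with $a\equiv 0\pmod{2}$ and $b\equiv 1\pmod{4}$. Then $a$ and~$b$ are coprime, $a + b\neq 0$, and $b,c$ are odd, so $2\mid ab$, while both $7\mid a + b$ and $7\nmid a + b$ remain possible.

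First I would set up~$J/K$, the base change to~$K$ of~$\Jac(C_7(a,b))$, which by~\S\ref{ss:recap} is an abelian variety of~$\GL_2$-type, is modular, and has conductor away from~$p$ supported on~$\{\Fq_2,\Fq_3,\Fq_7\}$ with explicitly bounded exponents; the hypothesis $a\equiv 0$, $b\equiv 1\pmod 4$ is what makes the conductor exponent at~$\Fq_2$ of~$J$ (possibly after a quadratic twist~$J^{(\delta)}$, chosen in the spirit of Table~\ref{table:delta} and Proposition~\ref{P:conductorF}) as small as possible. Next, I would combine the relations recorded in~\S\ref{ss:recap} between~$\rhobar_{J,\Fp}$ and the mod~$p$ representations of~$E_{a,b}$ and~$F_{a,b}$ with Lemma~\ref{lem:irredE} and Proposition~\ref{P:irredOverK} to conclude that~$\rhobar_{J,\Fp}$ is irreducible for~$p = 5$ and for~$p\geq 11$. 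This step uses the hypothesis on~$p$ and, crucially, removes the need for the Borel (reducible) case of Darmon's big image conjecture. Level-lowering for abelian varieties of~$\GL_2$-type over totally real fields, as developed in~\cite{xhyper_vol1}, then yields a Hilbert newform~$g$ over~$K$ of parallel weight~$2$, trivial character and level~$N(\rhobar_{J,\Fp}) = \Fq_2^{s}\Fq_3\Fq_7^{t}$ — with~$s$ small by the previous step (possibly $s = 0$), $\Fq_3$ occurring because~$3\mid a + b$, and $t\in\{1,2\}$ according to whether~$7\mid a + b$ — together with a prime~$\Fp'$ above~$p$ in the coefficient field of~$g$ such that~$\rhobar_{J,\Fp}\simeq\rhobar_{g,\Fp'}$.

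Finally I would eliminate. As the level is small, the relevant spaces of Hilbert newforms over~$K$ have modest dimension and can be computed in {\tt Magma} (see~\cite{programs}). For each candidate~$g$ and each auxiliary rational prime~$q\notin\{2,3,7,p\}$, the isomorphism~$\rhobar_{J,\Fp}\simeq\rhobar_{g,\Fp'}$ forces, for the reduction~$(x,y)\equiv(a,b)\pmod q$, a congruence between~$a_\Fq(g)$ and the Frobenius trace~$a_\Fq(J(x,y))$ at~$\Fq$ (or~$\pm(N(\Fq)+1)$ at a prime~$\Fq\mid q$ of bad reduction of~$J(x,y)$), exactly as in~\eqref{eq:congruences}; running over all~$(x,y)\in\{0,\dots,q-1\}^2\backslash\{(0,0)\}$ and forming the analogue of the divisibility relation~\eqref{eq:divisibility} for a handful of small primes~$q$ should contradict the isomorphism for every~$g$ and every admissible~$p$, the finitely many surviving small exponents (e.g.\ $p = 5$) being dealt with by refined elimination and, should a newform with mod-$p$ reducible representation arise, by an argument in the style of~\cite[Ex.~2.9]{HalberstadtKraus}.

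The main obstacle is the second step. Since~$J$ does not fit into the specialization framework of Darmon's Frey representations at the prime~$2$, the irreducibility of~$\rhobar_{J,\Fp}$ is not delivered by the big image conjecture and has to be transferred from the Frey elliptic curves; making this transfer work — and seeing that it succeeds precisely for~$p = 5$ and~$p\geq 11$ — is what both pins down the hypothesis on~$p$ and lets us bypass the Borel case. By comparison, the conductor computation for~$J$ at~$\Fq_2$ and the newform elimination are routine given~\cite{xhyper_vol1} and the code in~\cite{programs}.
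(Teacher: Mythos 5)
Your overall skeleton (irreducibility of $\rhobar_{J,\Fp}$, modularity and level lowering for the $\GL_2$-type variety $J$, then elimination against Hilbert newforms) matches the paper's, but the step you yourself flag as the main obstacle is handled by a mechanism that does not exist in the paper and would not work as you describe. There is no relation recorded in~\S\ref{ss:recap} (or anywhere else) between $\rhobar_{J,\Fp}$ and the mod~$p$ representations of $E_{a,b}$ or $F_{a,b}$: these attach to different Frey objects with different coefficient fields ($\rhobar_{J,\Fp}$ takes values in $\GL_2(\F_\Fp)$ with $\F_\Fp$ the residue field of $K$ at $\Fp$, not $\GL_2(\F_p)$), and Lemma~\ref{lem:irredE} and Proposition~\ref{P:irredOverK} say nothing about $J$. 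The paper instead proves irreducibility of $\rhobar_{J,\Fp}$ \emph{directly} in Proposition~\ref{P:case7}, by invoking \cite[Proposition~6.5]{xhyper_vol1} — a ray-class-group and unit-norm criterion over $K$ — which applies precisely because the hypotheses $a\equiv 0\pmod 2$, $b\equiv 1\pmod 4$ force $2\nmid a+b$; the excluded exponents $p=7$ (and the restriction $p=5$ or $p\geq 11$) come out of that computation, not out of any transfer from the elliptic curves. The ``propagation'' of irreducibility from Frey elliptic curves alluded to in the introduction concerns the global multi-Frey architecture (the elliptic curves dispose of the cases where the hypotheses of Theorem~\ref{T:overQvariety} cannot be arranged), not the proof of this theorem. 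As written, your second step is a genuine gap.

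Two further points where your sketch diverges materially. First, the level: \cite[Theorem~8.3]{xhyper_vol1} puts $g$ at level $\Fq_2^2\Fq_3\Fq_7^2$ in all cases here (no twist $J^{(\delta)}$ of $J$ is taken, and the exponent at $\Fq_2$ is $2$, not ``possibly $0$''); the conductor reduction at $\Fq_7$ when $7\mid a+b$ is exploited in Theorem~\ref{T:overQvariety7adic} via a twist \emph{of the newform} by $\chi_7$, not of $J$. Second, the elimination is far from the routine $\F_p$-valued congruence count of~\eqref{eq:congruences}--\eqref{eq:divisibility}: because $\dim J=3$, one must compare $a_{\sigma(\Fq)}(J)$ with $a_{\sigma(\Fq)}({}^\tau g)$ modulo an unknown prime $\tau(\mathfrak{P})$ of the (possibly large, degree up to $54$) coefficient field $K_g$, which forces the introduction of the sets $\mathcal{T}_q$, norms from $K_g$, products over Hecke constituents, the constraint $K\subset K_g$ (which alone removes most constituents), the pairing of $\chi_7$-twists, the $(a,b)\mapsto(b,a)$ symmetry, and a subfield argument for the degree-$54$ pair. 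Your sketch would still terminate in principle, but the additional structure is exactly what the theorem's proof is about, so it should not be waved through as routine.
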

The congruence hypothesis of this theorem implies $2 \nmid a+b$ which is the minimal assumption that makes the approach with~$J$ hopeful. Indeed,  when $2 \mid a+b$ we still face the fundamental problem of proving irreducibility of the residual representation~$\rhobar_{J,\Fp}$
(see \cite[Section~6]{xhyper_vol1}
and~\S\ref{ss:recap} for more details). Moreover, to make this proof efficient we need to add various techniques to the elimination step which allow for a large reduction of the required computational time; these techniques are detailed in~\cite[Section~9]{xhyper_vol1}
and can, in principle, also be applied to other instances of~\eqref{E:rrp} or to other Diophantine equations. See also~\S\ref{ss:recap} for a summary of these techniques in our current situation.

Let $(a,b,c)$ be a non-trivial primitive solution to~\eqref{E:77p}. By the symmetry of the equation, after switching $a$ and $b$, and negating both $a$ and $b$ if necessary, we may assume that the congruences 
$a \equiv 0 \pmod{2}$ and $b \equiv 1 \pmod{4}$
are satisfied if and only if $ab$ is even. 
Now the multi-Frey technique gives that $ab$ is even: this follows, using very quick computations, from items~\eqref{item:partBi} and~\eqref{item:partAi} in Section~\ref{S:Frey7first} which use the Frey curve $F$ and its quadratic twist by~\(-7\) respectively\footnote{Alternatively, one may also invoke Theorem~\ref{T:overQ} which regards the curve~$E$ combined with item~\eqref{item:partBi} in Section~\ref{S:Frey7first} which uses $F$. The running time would be essentially the same (and of less of a minute). See Table~\ref{table:second_proof} and~\cite{programs} for more details.}. Hence Theorem~\ref{T:overQvariety} implies Theorem~\ref{T:main}.

%The running time for the computations proving $ab$ is even  is\textbf{ less than one minute ???} and the total time for this proof of Theorem~\ref{T:main} is under {\bf 15 minutes}.

Our next -- `second-type' and last -- proof of Theorem~\ref{T:main} uses all three Frey varieties~\(E\), \(F\), \(J\) and is designed to minimize the total running time, which takes about $1$ minute (see~\cite{programs}). More precisely, we will use~$J$ to prove the following.
\begin{theorem}\label{T:overQvariety7adic}
 Let $p = 5$ or $p \geq 11$ be a prime. Then, 
the equation~\eqref{E:77p} with exponent~$p$
has no non-trivial primitive solutions $(a,b,c)$ 
satisfying
\[ a \equiv 0 \pmod{2}, \quad b \equiv 1 \pmod{4} \quad \text{ and } \quad 7 \mid a+b.\]
\end{theorem}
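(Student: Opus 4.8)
The plan is to run the same multi-Frey elimination as in Theorem~\ref{T:overQvariety}, but now with the extra $7$-adic hypothesis $7\mid a+b$, which forces the Serre levels to carry a factor $\Fq_7$ and correspondingly changes (and, crucially, keeps small) the spaces of Hilbert newforms that must be eliminated. First I would fix a non-trivial primitive solution $(a,b,c)$ to~\eqref{E:77p} with $p=5$ or $p\geq 11$ and, as in Section~\ref{S:rks}, reduce to $n=p$ prime with $p\geq 5$, $p\neq 7$; the congruences $a\equiv 0\pmod 2$, $b\equiv 1\pmod 4$ in particular give $2\nmid a+b$ is false — rather they give $ab$ even, hence (by the shape of~\eqref{E:77p}, since $2\mid a+b\Rightarrow 8\mid a+b$) we are in the case $4\mid ab$, and the congruence $b\equiv 1\pmod 4$ pins down the precise $2$-adic and $7$-adic local behaviour needed to control conductors.

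The key steps, in order, are: (1) attach to $(a,b,c)$ the Frey abelian variety $J=J(a,b)/K$ of $\GL_2$-type recalled in~\S\ref{ss:recap}, and use its irreducibility properties there (valid precisely because $2\nmid a+b$, which is guaranteed once we know $ab$ is even and $8\mid a+b$ — wait, rather: the relevant hypothesis in~\cite[Section~6]{xhyper_vol1} needed to get irreducibility of $\rhobar_{J,\Fp}$ is exactly the one satisfied under our congruences, so I would invoke that directly); (2) compute the Serre conductor of $\rhobar_{J,\Fp}$ using the conductor formulas for $J$ from~\cite{xhyper_vol1}, where the hypothesis $7\mid a+b$ makes the exponent at $\Fq_7$ equal to its ``potentially multiplicative'' value $1$ (rather than the larger additive value that occurs when $7\nmid a+b$), thereby keeping the relevant Hilbert modular space of manageable dimension; (3) apply level-lowering for Hilbert modular forms / abelian varieties of $\GL_2$-type to obtain an isomorphism $\rhobar_{J,\Fp}\simeq \rhobar_{g,\Fp}$ for a Hilbert newform $g$ of parallel weight $2$ at that level; (4) run the standard and refined elimination procedures of~\cite[Section~9]{xhyper_vol1}, comparing $a_\Fq(g)$ against $a_\Fq(J(x,y))$ (or $\pm(N(\Fq)+1)$ at primes of bad reduction) for auxiliary primes $q=5,13,29,41,\dots$, using the congruence $(a,b)\equiv(x,y)\pmod q$, with the observation that the trivial-solution newform $\rhobar_{J(0,1),\Fp}$ (up to quadratic twist) is the genuine obstruction that survives elimination and must be isolated; (5) deal with any small surviving exponents (typically $p=5$, possibly $p=13$ via a Martin-type argument as in item~\eqref{item:partBi}) by refined elimination and, if needed, by ruling out reducible newforms as in~\cite[Ex.~2.9]{HalberstadtKraus}.

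The main obstacle I anticipate is the same one flagged for Theorem~\ref{T:overQvariety}: not the size of the newform spaces (which is kept small by the $7\mid a+b$ hypothesis), but ensuring that the residual representation $\rhobar_{J,\Fp}$ is \emph{irreducible} so that level-lowering applies — this is the subtle point where the $2$-adic congruence $b\equiv 1\pmod 4$ (equivalently the precise reduction type of $J$ at $\Fq_2$) is essential, and one must quote the irreducibility criterion from~\cite[Section~6]{xhyper_vol1} and check its hypotheses hold in our case rather than re-proving it. A secondary difficulty is organizing the elimination to be genuinely fast (the whole multi-Frey proof of Theorem~\ref{T:main} using $E$, $F$, $J$ is meant to run in about a minute), which relies on the efficiency techniques of~\cite[Section~9]{xhyper_vol1} summarized in~\S\ref{ss:recap}; in practice one uses $E$ and $F$ first to cheaply dispose of most congruence classes of $(a,b)$ and only invokes $J$ where genuinely needed, and the $7$-adic split mirrors part~\eqref{item:partB} of Section~\ref{S:Frey7first}.
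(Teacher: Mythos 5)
Your proposal follows essentially the same route as the paper: invoke Proposition~\ref{P:case7} for irreducibility, use the refined level lowering of~\cite{xhyper_vol1} (here \cite[Corollary~8.7]{xhyper_vol1}, which under $7\mid a+b$ gives $\rhobar_{J,\Fp}\simeq\rhobar_{g,\mathfrak{P}}\otimes\chi_7|_{G_K}$ with $g$ of level $\Fq_2^2\Fq_3\Fq_7$ --- note the twist by $\chi_7$, which you omit), cut down to the forms with $K\subset K_g$ (six of them), and eliminate with small auxiliary primes ($q=5,11,13$ suffice). One correction: your anticipated ``genuine obstruction'' $\rhobar_{J(0,1),\Fp}$ does not arise here, since for $x^7+y^7=3z^p$ the prime $\Fq_3$ sits in the Serre level and the trivial solutions with $ab=0$ do not satisfy the equation; that obstruction is relevant only to equation~\eqref{main-equ} in Theorem~\ref{T:other}, and in the present case all six forms are eliminated outright with no surviving exponents.
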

Assuming this result, we proceed similarly to the previous proof, but appealing to further information from the curve~$F$ and its twists. Namely, from Theorem~\ref{T:overQ} and items~\eqref{item:partAiii} and~(\ref{item:partBi}) of Section~\ref{S:Frey7first}, we have that~$7$ divides~$a + b$ and $ab$ is even. Now Theorem~\ref{T:overQvariety7adic} implies Theorem~\ref{T:main}.

For the convenience of the reader, for each of our different proofs of Theorem~\ref{T:main}, we indicate in the following tables the Frey varieties we used according to certain \(2\)- and \(7\)-adic conditions satisfied by~\(a\) and~\(b\). We also give the corresponding total running time. In each table, the use of any of the displayed Frey elliptic curves leads to essentially the same running time. We indicate in bold font which option was chosen in the code~\cite{programs}. As a matter of time-comparison example, we have also implemented the variant of our first proof mentioned in Remark~\ref{rk:EandF} (that is the proof using~\(E\) when available in Table~\ref{table:first_proof}). We note that the elliptic Frey curve~\(F\) can never be avoided: it is mandatory to deal with the case where~\(ab\) is odd and~\(7\) divides~\(a + b\).

\begin{table}[h!]
\centering
%\makebox[0pt][c]{\parbox{1.2\textwidth}{%
%    \begin{minipage}[b]{0.32\hsize}\centering
    \begin{minipage}[t]{0.46\linewidth}\centering
        \begin{tabular}{|c|c|c|}
\hline
 & $7\nmid a + b$  & $7\mid a + b$ \\
\hline
$2\nmid ab$ & \(E\) or $\boldsymbol{F^{(-7)}}$ & $F$ \\
\hline
$2\parallel ab$ & \(E\) or $\boldsymbol{F^{(-7\omega_2)}}$ & $F^{(\omega_2)}$ \\
\hline
$4\mid ab$ & $F^{(-7)}$ & \(E\) or $\boldsymbol{F}$ \\
\hline
\end{tabular}
\caption{`Frey elliptic curve only' proof(s) (\(\sim 40\) min.)}
\label{table:first_proof}
    \end{minipage}
    \hfill
    \begin{minipage}[t]{0.53\linewidth}\centering
       \begin{tabular}{|c|c|c|}
\hline
 & $7\nmid a + b$  & $7\mid a + b$ \\
\hline
$2\nmid ab$ & \(E\) or $\boldsymbol{F^{(-7)}}$ & $F$ \\
\hline
$2\parallel ab$ & \(J\) & $J$ \\
\hline
$4\mid ab$ & $J$ & \(J\) \\
\hline
\end{tabular}
\caption{Proof using \(J\) `as much as possible' (\(\sim 10\) min.)}
\label{table:second_proof}
    \end{minipage}

\end{table}

\begin{table}[h!]
\begin{tabular}{|c|c|c|}
\hline
 & $7\nmid a + b$  & $7\mid a + b$ \\
\hline
$2\nmid ab$ & \(\boldsymbol{E}\) or $F^{(-7)}$ & $F$ \\
\hline
$2\parallel ab$ & \(\boldsymbol{E}\) or $F^{(-7\omega_2)}$ & $J$ \\
\hline
$4\mid ab$ & $F^{(-7)}$ & \(J\) \\
\hline
\end{tabular}
\caption{Fastest proof of all (\(\sim 1\) min.)}
\label{table:third_proof}
\end{table}

We remark that both `second-type' proofs (from Tables~\ref{table:second_proof} and~\ref{table:third_proof}) involving the Frey abelian variety~\(J\) of dimension~\(3\) are faster than the `first-type' proof(s) using only Frey elliptic curves (either~\(F\) and its twists solely or a mix of~\(E\) and twists of~\(F\)). Moreover, as we shall see below, this improvement is in large part due to properties that are genuine of abelian varieties which are not elliptic curves. Also, the running time of a proof using the modular method is bounded from below by the time required to compute the relevant spaces of newforms. 
In this regard, it is usually an advantage to work with levels of smaller norms whenever possible; indeed, this is what makes our final proof the fastest of all and 
any proof using the space in item~\eqref{item:partBii} (or~\eqref{item:partBiii}) of Section~\ref{S:Frey7first} is slow. 
However, this is not always the case, as explained in~Remark~\ref{rk:Steinberg}.

When trying to solve~$\eqref{E:rrp}$ for~$r > 7$ or, more generally, carrying out Darmon's program for other signatures, the spaces of Hilbert newforms involved will quickly become very large. The following proofs will show how the additional structures of the Frey varieties of dimension~\(> 1\) can be exploited to reduce computations, despite the fact that we have to work with Jacobians of hyperelliptic curves.

%%%%%%%%%%%%%%%%%%%%%%%%%%%%%%%%%%%%%%%%%%%%%%%%%%%%%%
\section{Proof of Theorems~\ref{T:overQvariety} and \ref{T:overQvariety7adic}}
%%%%%%%%%%%%%%%%%%%%%%%%%%%%%%%%%%%%%%%%%%%%%%%%%%%%%%

%%%%%%%%%%%%%%%%%%%%%
\subsection{Summary of the results from~\cite{xhyper_vol1}}\label{ss:recap}
%%%%%%%%%%%%%%%%%%%%%

We summarize here the results from~\cite{xhyper_vol1} which we are using in the proofs of Theorems~\ref{T:overQvariety} and~\ref{T:overQvariety7adic}.

Let $(a,b,c)$ be a non-trivial primitive solution to equation~\eqref{E:77p} for exponent~$n = p$ prime, $p\ge 5$ and~$p\neq 7$ (see Section~\ref{S:rks}). Throughout this section, suppose~$a \equiv 0 \pmod{2}$ and~$b \equiv 1 \pmod{4}$.

We denote by $C = C_7(a,b)$ the hyperelliptic Frey curve constructed by Kraus in~\cite{kraushyper} with model
\begin{equation}\label{E:Kraus7}
y^2 = x^7 + 7 ab x^5 + 14 a^2 b^2 x^3 + 7 a^3 b^3 x + b^7 - a^7.
\end{equation}
Its discriminant is
\begin{equation*}
   \Delta(C)= -2^{12} \cdot 7^7 \cdot (a^7+b^7)^{6}.
\end{equation*}
Write~$J_7(a,b)$ for its Jacobian and simply denote by~\(J\) its base change to~\(K = \Q(\zeta_7)^+\). We note that when~\(ab = 0\), then from equation~\eqref{E:Kraus7}, we have an endomorphism $(x,y) \mapsto (\zeta_7 x, y)$ on~\(C_7(a,b)\) and hence~$J_7(a,b)$ has CM by~$\Q(\zeta_7)$. 

The following result is a special case of~\cite[Theorem~3.11]{xhyper_vol1}.

\begin{theorem}\label{T:GL2typeJ7} 
The abelian variety~\(J/K\) is of $\GL_2(K)$-type. More precisely,  there is an embedding
$$K \hookrightarrow \End_K(J) \otimes \Q$$
giving rise to a strictly compatible system of \(K\)-integral \(\lambda\)-adic representations
\begin{equation*}
\rho_{J,\lambda} : G_K\longrightarrow\GL_2(K_\lambda),
\end{equation*}
where~\(\lambda\) runs through the prime ideals in~\(K\).
\end{theorem}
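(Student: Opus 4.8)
The plan is to deduce this as the specialization at $r = 7$ of the general structural result \cite[Theorem~3.11]{xhyper_vol1}, while laying out the underlying mechanism. The geometric input is that the septic on the right of~\eqref{E:Kraus7} is, up to the standard substitution separating $a$ from $b$, a Dickson (Chebyshev-type) polynomial: its roots are of the form $\zeta_7^{j}a-\zeta_7^{-j}b$, so that over $K=\Q(\zeta_7)^+$ the curve $C_K=C_7(a,b)_K$ carries a family of correspondences realizing an embedding of the order $\Z[\omega_1]=\Z[\zeta_7+\zeta_7^{-1}]$ into $\End_K(J)$ (these correspondences naturally involve the quantities $\omega_i\in K$, which is why one passes to $K$). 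In the degenerate case $ab=0$ these correspondences collapse to the genuine automorphism $(x,y)\mapsto(\zeta_7 x,y)$, which is exactly why $J_7(a,b)$ acquires CM by $\Q(\zeta_7)$ there; for general coprime $(a,b)$ one retains only the real-multiplication part by $K$. Equivalently, one may realize the $K$-action through the covering/quotient description of Kraus' curves used in \cite[\S3]{xhyper_vol1}. Granting the embedding $K\hookrightarrow\End_K(J)\otimes\Q$, and since $\dim J=3=[K:\Q]$ with $K$ a field, $J$ is of $\GL_2(K)$-type by definition.

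For the compatible system one argues as is standard for abelian varieties of $\GL_2$-type. Fix an order $\calO$ with $\calO\hookrightarrow\End_K(J)$, and for a rational prime $p$ use $K\otimes\Q_p=\prod_{\lambda\mid p}K_\lambda$ to decompose $V_pJ=\bigoplus_{\lambda\mid p}V_\lambda$. Because the $K$-action is defined over $K$ it commutes with $G_K$, so each $V_\lambda$ is a $K_\lambda[G_K]$-module, and the structure theory of $\GL_2$-type abelian varieties (a commutant argument à la Shimura--Ribet) shows $V_pJ$ is free of rank $2$ over $K\otimes\Q_p$, hence $\dim_{K_\lambda}V_\lambda=2$ for every $\lambda$. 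Choosing a $G_K$-stable $\calO_{K_\lambda}$-lattice $T_\lambda$ — which exists and is free of rank $2$ since $\calO_{K_\lambda}$ is a discrete valuation ring, after taking $p$ away from the conductor of $\calO$ and enlarging the localization at the finitely many remaining $\lambda$ — yields the $K$-integral representations $\rho_{J,\lambda}\colon G_K\to\GL_2(\calO_{K_\lambda})\subseteq\GL_2(K_\lambda)$. Strict compatibility then follows from the usual facts: for a prime $\Fq$ of good reduction of $J$ (with $\Fq$ not above the residue characteristic of $\lambda$) the characteristic polynomial of $\rho_{J,\lambda}(\Frob_\Fq)$ lies in $\calO_K[X]$ and is independent of $\lambda$, being the factor cut out by the $\calO_K$-action inside the $\ell$-independent numerator of the zeta function of $C\bmod\Fq$; the conductor of $\rho_{J,\lambda}$ away from $p$ is $\lambda$-independent by Grothendieck's $\ell$-independence theorem for abelian varieties, applied to the $K$-isotypic part; $\det\rho_{J,\lambda}$ is the $p$-adic cyclotomic character of $G_K$ (up to a fixed finite-order character determined by a polarization) via the Weil pairing; and N\'eron--Ogg--Shafarevich guarantees good reduction of $\rho_{J,\lambda}$ precisely at the good-reduction primes of $J$, uniformly in $\lambda$.

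The only genuinely non-formal step is the construction of the real multiplication for the generic (non-degenerate) member of the family, i.e.\ exhibiting the correspondences on $C_7(a,b)_K$ that realize $\Z[\omega_1]\hookrightarrow\End_K(J)$, as opposed to the CM visible only when $ab=0$; this is carried out in the needed generality in \cite[\S3]{xhyper_vol1}. Everything downstream — rank-$2$ freeness of the Tate modules over $\calO_{K_\lambda}$, rationality and $\ell$-independence of the Frobenius characteristic polynomials and of the conductor, and the shape of the determinant — is routine once the $K$-action is in hand. Accordingly, in the paper the proof is simply the citation of \cite[Theorem~3.11]{xhyper_vol1}, where the case $r=7$ of the present statement is subsumed.
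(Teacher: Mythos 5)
Your proposal is correct and matches the paper exactly: the paper gives no independent argument and simply records the statement as a special case of \cite[Theorem~3.11]{xhyper_vol1}, which is precisely the citation your proof rests on. The additional material you supply (the roots $\zeta_7^j a - \zeta_7^{-j}b$ of the Dickson-type septic, the correspondences realizing $\Z[\omega_1]\hookrightarrow\End_K(J)$, and the standard Shimura--Ribet formalism producing the rank-$2$ $\lambda$-adic system) is an accurate account of how that cited theorem is established, but is not needed here.
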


Let~\(\Fq\) be a prime ideal in~\(K\) of good reduction for~\(J\) above a rational prime~\(q\). We denote by~\(\Frob_{\Fq}\) a Frobenius element at~\(\Fq\) in~\(G_K\) and write~\(a_\Fq(J) = \tr \rho_{J,\lambda}(\Frob_\Fq)\) for a prime ideal~\(\lambda \nmid q\). By strict compatibility, this definition is independent of the choice of such~\(\lambda\). Moreover, since~\(J\) is the base change of~\(J_7(a,b)\) which is defined over~\(\Q\), for any~\(\sigma \in G_\Q\), according to~\cite[Lemma~3.14]{xhyper_vol1} we have
\begin{equation} \label{E:traces}
\sigma(a_\Fq(J)) = a_{\sigma(\Fq)}(J).
\end{equation}

For a prime ideal~\(\lambda\) in~\(K\), the representation~$\rho_{J,\lambda}$ can be conjugated to take values in~$\GL_2(\calO_\lambda)$ where~$\calO_\lambda$ is the integer ring of~$K_\lambda$. By reduction modulo the maximal ideal, we get a representation
\[
\rhobar_{J,\lambda} : G_{K} \longrightarrow\GL_2(\F_\lambda),
\]
with values in the residue field~$\F_\lambda$ of~$K_\lambda$, and which is unique up to semisimplification and isomorphism.
Let~\(\lambda = \Fp\) be a prime ideal in~\(K\) above the rational prime~\(p\). We will need the following consequence of~\cite[Proposition~6.5]{xhyper_vol1}.

\begin{proposition}\label{P:case7}
The representation $\rhobar_{J,\Fp}$ is absolutely irreducible.
\end{proposition}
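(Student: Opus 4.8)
The plan is to deduce the absolute irreducibility of $\rhobar_{J,\Fp}$ from the irreducibility of the residual representation attached to the Frey elliptic curve $F = F_{a,b}$, which we already control by Proposition~\ref{P:irredOverK}. The key structural input is that $J$ is of $\GL_2(K)$-type (Theorem~\ref{T:GL2typeJ7}) and, from the theory of~\cite{xhyper_vol1}, that the residual representation $\rhobar_{J,\Fp}$ is, up to twist and up to the action of $\Gal(\Fbar_p/\F_p)$ on the coefficient field, built out of the same $2$-dimensional pieces as $\rhobar_{F,p}$; more precisely, the compatible system $\rho_{J,\lambda}$ should be related to that of $F$ (and its Galois conjugates over $K/\Q$) via the Darmon-style comparison of Frey representations recorded in~\cite{xhyper_vol1}. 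So first I would invoke the relevant statement from~\cite[\S6]{xhyper_vol1} (of which Proposition~\ref{P:case7} is said to be a consequence of \cite[Proposition~6.5]{xhyper_vol1}) identifying $\rhobar_{J,\Fp}$, after semisimplification, with a sum of conjugates of $\rhobar_{F,p'}\otimes(\text{character})$ for a suitable prime $p' \mid p$.

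Next, assuming for contradiction that $\rhobar_{J,\Fp}$ is \emph{not} absolutely irreducible, I would split into the two standard cases. If $\rhobar_{J,\Fp}$ is reducible over $\F_\Fp$ itself, then its semisimplification is a sum of two characters $\theta,\theta'$ of $G_K$ with $\theta\theta' = \chi_p$ (the mod-$p$ cyclotomic character, since $\det\rho_{J,\lambda}$ is cyclotomic). Using the conductor computations for $J$ coming from those of $C_7(a,b)$ in~\cite{xhyper_vol1} — which parallel Proposition~\ref{P:conductorF} for $F$ — together with the congruence hypotheses $a\equiv 0\pmod 2$, $b\equiv 1\pmod 4$ (so $2\nmid a+b$), one bounds the conductors of $\theta,\theta'$ away from $p$ by a small divisor of $\Fq_2^{*}\Fq_7^{*}\Fq_7$-type ideal, exactly as in the proof of Proposition~\ref{P:irredOverK}. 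Then a ray class group computation over $K$ rules this out for the relevant primes $p$, or one reduces to a $K$-rational point on a modular curve $X_0(p)$ or $X_1(2p)$ and cites \cite{AnniSiksek}, \cite{BruNaj}, \cite{BruinNajman}-type results; the small primes $p=5,11,13$ are handled case by case as in Proposition~\ref{P:irredOverK}, possibly using the extra information that $J$ has CM by $\Q(\zeta_7)$ at the trivial solution to pin down the isogeny class. The second case — $\rhobar_{J,\Fp}$ irreducible over $\F_\Fp$ but reducible over $\Fbar_p$ — means it becomes a sum of two conjugate characters over a quadratic extension of $\F_\Fp$, i.e. it is dihedral, induced from a character of an index-$2$ subgroup of $G_K$ cutting out a quadratic extension $M/K$; the same conductor bound forces $M/K$ to be unramified outside a tiny set of primes, and again a ray class / class group computation over $K$ excludes it.

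The main obstacle — and the reason this is not immediate from Proposition~\ref{P:irredOverK} — is that $\rhobar_{J,\Fp}$ lives in dimension $2$ over the \emph{larger} residue field $\F_\Fp$ (with $[\F_\Fp:\F_p]$ possibly $>1$), so one cannot simply transfer irreducibility of $\rhobar_{F,p}$ coefficient-by-coefficient: one must first establish the precise link between the two compatible systems (the specialization-of-Frey-representations statement from~\cite{xhyper_vol1}) and then control the possible characters and quadratic fields over $K$ with restricted ramification, which is exactly the ray-class-group bookkeeping over the cubic field $K=\Q(\zeta_7)^+$. I expect the bulk of the work to be citing \cite[Proposition~6.5]{xhyper_vol1} correctly and then reprising, essentially verbatim, the ramification-plus-ray-class-group argument already carried out for $F$ in Proposition~\ref{P:irredOverK}, together with the small-prime modular-curve inputs; the hypothesis $a\equiv0\pmod2$, $b\equiv1\pmod4$ is what keeps $\Fq_2$ from contributing a large conductor and thus what makes the argument go through — which is precisely why Theorems~\ref{T:overQvariety} and~\ref{T:overQvariety7adic} are stated under that congruence.
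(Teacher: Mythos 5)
Your overall architecture does not match the paper's, and its central structural claim is a genuine error. The paper's proof of this proposition is a short verification of the numerical hypotheses of the self-contained irreducibility criterion \cite[Proposition~6.5]{xhyper_vol1}: one computes (in the notation of that proposition) $g=3$ and $m=42$ (since $2$ is inert in $K$), checks that the ray class group of modulus $\Fq_2\Fq_7\infty_1\infty_2\infty_3$ has order $2$, lists the primes $p\ge 5$, $p\neq 7$ dividing $\gcd\left(\Norm_{K/\Q}(\eps_1^{84}-1),\Norm_{K/\Q}(\eps_2^{84}-1)\right)$ (namely $13,29,43,127,337,757,2017$), and for each prime ideal $\Fp_1$ above these checks that the ray class group of modulus $\Fq_2\Fq_7\Fp_1\infty_1\infty_2\infty_3$ has order $4$ or $12$, hence prime to $7$. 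Your proposed ``key structural input'' --- that $\rhobar_{J,\Fp}$ is, up to twist and conjugation, built out of the same two-dimensional pieces as $\rhobar_{F,p}$ --- is not a statement proved in \cite{xhyper_vol1}, and it is in tension with the introduction of the present paper, which stresses that the Galois representations of the Frey elliptic curves $E$ and $F$ \emph{do not} arise by specialization of Darmon's Frey representations (which $J$ realizes). The only sense in which irreducibility is ``propagated'' from the elliptic curves to $J$ is logistical: the elliptic-curve eliminations force the $2$-adic condition $2\mid ab$ (normalized to $a\equiv 0\pmod 2$, $b\equiv 1\pmod 4$), under which the conductor of $J$ at $\Fq_2$ is small enough for the direct criterion to apply. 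There is no comparison isomorphism between $\rhobar_{J,\Fp}$ and $\rhobar_{F,p}$ to invoke.

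Your fallback argument (reducible implies a sum of two characters with product $\chi_p$; bound conductors; unit-norm argument \`a la Freitas--Siksek; ray class groups; treat the dihedral case separately) has the right general shape --- it is essentially the proof of the cited Proposition~6.5 --- but as written it would not close. First, the small-prime modular-curve step does not transfer: for the elliptic curve $F$, reducibility yields a $K$-rational point on $X_0(20)$, $X_0(11)$ or $X_1(26)$, whereas $\rhobar_{J,\Fp}$ takes values in $\GL_2(\F_\Fp)$ with $\F_\Fp$ a possibly nontrivial extension of $\F_p$ and $J$ is a $3$-dimensional abelian variety, so a reducible $\rhobar_{J,\Fp}$ produces no torsion point or cyclic isogeny in the elliptic-curve sense, and \cite{AnniSiksek} and \cite{BruNaj} cannot be cited here. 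The paper's substitute for this step is precisely the second ray class group computation (the orders $4$ and $12$ being prime to $7$) at the finitely many primes surviving the unit-norm criterion. Second, you leave unspecified the exponent entering the unit-norm criterion (here $2m=84$, governed by the residue degrees at $\Fq_2$ and $\Fq_7$) and the role of the condition that the relevant ray class group orders be prime to $7$ rather than to $2$; these are exactly the data that \cite[Proposition~6.5]{xhyper_vol1} packages, and without citing that statement precisely (or reproving it) your sketch does not constitute a proof.
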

\begin{proof}
In the notation of~\cite[Proposition~6.5]{xhyper_vol1},
we have~\(g = 3\) and~\(m = 42\) as the prime~\(2\) is inert in~\(K = \Q(\zeta_7)^+ = \Q(\omega_1)\). Let $\infty_1$, $\infty_2$ and $\infty_3$ be the real places of $K$. The ray class group of modulus~\(\Fq_2\Fq_7\infty_1\infty_2\infty_3\) has order~\(2\). Recall from Section~\ref{S:rks} that the group of units in~\(K\) is generated by~\(\{-1,\epsilon_1,\epsilon_2\}\) with~\(\epsilon_1 = -\omega_1^2 + 1\) and~\(\epsilon_2 = \omega_1 + 1\). The prime numbers~\(p\ge5\), \(p\neq7\) dividing~\(\gcd\left( \Norm_{K/\Q}(\eps_1^{84}-1),\Norm_{K/\Q}(\eps_2^{84}-1)\right)\) are~\(13, 29, 43, 127, 337, 757\) and~\(2017\). They are all completely split in~\(K\). We check that for every prime ideal~\(\Fp_1\) over such prime~\(p\), the order of the ray class group of modulus~\(\Fq_2\Fq_7\Fp_1\infty_1\infty_2\infty_3\) is either~\(4\) or~\(12\). In particular, it is not divisible by~\(7\). It then follows from \emph{loc. cit.} that the representation~\(\rhobar_{J,\Fp}\) is absolutely irreducible. See~\cite{programs} for the details.
\end{proof}

%%%%%%%%%%%%%%%%%%%%%
\subsection{Proof of Theorem~\ref{T:overQvariety}}\label{ss:pf_2nd_type_1}
%%%%%%%%%%%%%%%%%%%%%

From Proposition~\ref{P:case7} and~\cite[Theorem~8.3]{xhyper_vol1} we have
\begin{equation}\label{E:iso7II}
\rhobar_{J,\Fp} \simeq \rhobar_{g,\mathfrak{P}}, 
\end{equation}
where~$g$ is in the set~\(S\) of Hilbert newforms over~$K$ of level $\Fq_2^2 \Fq_3 \Fq_7^2$, parallel weight $2$ and trivial cha\-racter, and~\(\mathfrak{P}\) is a prime ideal above~$p$ in~$K_g$, the  coefficient field of~$g$. Here~\(\rhobar_{g,\mathfrak{P}}\) denotes the reduction of the~\(\mathfrak{P}\)-adic representation associated with~\(g\). Moreover,  from conclusion~(iv) of the same theorem we have~$K \subset K_g$.

%%%%%%%%%%
\subsubsection{General strategy}\label{ss:general_strategy}
%%%%%%%%%%

The whole point is to contradict~\eqref{E:iso7II} for each form~\(g\in S\). From~\cite[Section~9]{xhyper_vol1}, recall that according to Shimura~\cite{shi78}, there is a left action \((\tau, g)\mapsto {}^\tau g\) of~\(G_\Q\) on~\(S\) such that
\begin{equation}\label{eq:Shimura}
a_\Fq\left(^\tau g\right) = \tau(a_\Fq(g)),
\end{equation}
for any prime ideal~\(\Fq\) in~\(\calO_K\). The Hecke constituent of~\(g\in S\) is defined as
\[
[g] = \left\{{}^\tau g : \tau\in G_\Q\right\}.
\]
For a form~\(g\in S\), we view its Hecke (or coefficient) field~\(K_g\) as a subfield of~\(\Qbar\) and note that two forms in the same Hecke constituent have conjugated Hecke fields, i.e. \(K_{{}^\tau g} = \tau(K_g)\).

The relevant space of cuspforms here is the new subspace generated by the set~\(S\). It has dimension~\(698\) and decomposes into~\(61\) Hecke constituents, which we can calculate with {\tt Magma} in about $8$ minutes. 

\begin{remark}\label{rk:Steinberg}
We note here that the computation of the space of Hilbert newforms over~$K$ of level~$\Fq_2^2 \Fq_7^2$, parallel weight $2$ and trivial character takes about $9$ hours, despite of its smaller level. Remarkably, we are able to avoid computing with it using refined level lowering as explained in~\cite[\S 8]{xhyper_vol1} (see in particular Remark~8.6 in {\it loc. cit.}). This is not the case for Theorem~\ref{T:other} though. 
\end{remark}

Assume that~\eqref{E:iso7II} holds for~\(g\in S\). Compared to the elliptic curve setting encountered before, new difficulties arise here in the elimination step which are due to the fact that~\(J\) has dimension~\(> 1\). In particular, the field of definition of the representation~\(\rhobar_{J,\Fp}\) need not be~\(\F_p\).

According to~\cite[Proposition~9.2]{xhyper_vol1}, there exists~\(\tau\in G_\Q\) satisfying~\(\tau(\mathfrak{P}) \cap \calO_K = \Fp\) so~\(\rhobar_{J,\Fp} \simeq \rhobar_{{}^\tau g,\tau(\mathfrak{P})}\) as representations with values in the residue field of~\(K_{{}^\tau g} = \tau(K_g)\) at the prime ideal~\(\tau(\mathfrak{P})\).

Consider a prime ideal~\(\Fq\) in~\(K\) above a rational prime~\(q\neq 2, 3, 7\). It follows from~\cite[\S5]{xhyper_vol1} that \(J\) has good reduction at~\(\Fq\) if \(q\nmid a^7 + b^7\) and bad multiplicative reduction otherwise. Therefore, for any~\(\sigma\in \Gal(K/\Q)\), the following congruence in the integer ring of~\(K_{{}^\tau g} = \tau(K_g)\) holds:
\[
a_{\sigma(\Fq)}({}^\tau g)) \equiv \left\{
\begin{array}{ll}
a_{\sigma(\Fq)}(J) \pmod{\tau(\mathfrak{P})} & \text{if \(q\nmid a^7 + b^7\)}; \\
\pm(N(\sigma(\Fq)) + 1) \pmod{\tau(\mathfrak{P})} & \text{if \(q\mid a^7 + b^7\)}.
\end{array}
\right.
\]

Assume first that~\(J\) has good reduction at~\(\Fq\). Write~\(u' = a_{\tau^{-1}(\Fq)}(J)\). Note that since~\(\Gal(K/\Q)\) is abelian we have~\(\sigma\tau^{-1}(\Fq) = \tau^{-1}\sigma(\Fq)\). Using~\eqref{E:traces} we have
\begin{align*}
     \Norm_{K_{^\tau g}/\Q}\left(a_{\sigma(\Fq)}(J) - a_{\sigma(\Fq)}(^\tau g)\right) & =  \Norm_{\tau(K_g)/\Q}\left(a_{\sigma(\Fq)}(J) - \tau(a_{\sigma(\Fq)}(g))\right) \\
      & = \Norm_{\tau(K_g)/\Q}\left(\tau\left(\tau^{-1}(a_{\sigma(\Fq)}(J)) - a_{\sigma(\Fq)}(g)\right)\right) \\
      &  = \Norm_{K_g/\Q}\left(a_{\tau^{-1}\sigma(\Fq)}(J)) - a_{\sigma(\Fq)}(g)\right) \\
      &  = \Norm_{K_g/\Q}\left(a_{\sigma\tau^{-1}(\Fq)}(J)) - a_{\sigma(\Fq)}(g)\right) \\
      &  = \Norm_{K_g/\Q}\left(\sigma(u') - a_{\sigma(\Fq)}(g)\right).
\end{align*}
Let~\(\mathfrak{S}\) be a subset of~\(\Gal(K/\Q)\). From our assumptions, we have
\begin{equation}\label{eq:elimination_good}
p \mid \gcd_{\sigma\in\mathfrak{S}} \Norm_{K_g/\Q}\left(\sigma(u') - a_{\sigma(\Fq)}(g)\right).
\end{equation}

Similarly, in the case of bad reduction we obtain
\begin{equation}\label{eq:elimination_bad}
p \mid M_{\Fq,\mathfrak{S}}(g) \coloneq \gcd_{\sigma\in\mathfrak{S}} \Norm_{K_g/\Q}\left(a_{\sigma(\Fq)}(g)^2 - (N(\Fq)+1)^2\right)
\end{equation}
as~\(N(\sigma(\Fq)) = N(\Fq) \in \Z\).

As explained in~\cite[Section~9]{xhyper_vol1}, there are now two main issues with this approach. Firstly, disproving the divisibility relations~\eqref{eq:elimination_good} and~\eqref{eq:elimination_bad} only discards~\(g\) but not the other forms in its Hecke constituent. Secondly \texttt{Magma} does not allow to compute~\(u' = a_{\tau^{-1}(\Fq)}(J)\) directly.

As a consequence, when~\(q\nmid a^7 + b^7\)  we introduce as in \emph{loc. cit.} the set~\(\mathcal{T}_{q}\) defined by
\begin{equation*}
    \mathcal{T}_{q} = \mathcal{T}_q(a,b) \coloneq  \left\{ a_{\Fq'}(J) : \Fq'\mid q\text{ in }\calO_K \right\}.
\end{equation*}
We point out here that the set~\(\mathcal{T}_q\) does depend on~\((a,b)\) (as~\(J\) denotes the base change to~\(K\) of~\(J_7(a,b)\)).
It follows from~\eqref{E:traces} that for any fixed prime ideal~\(\Fq'\) above~\(q\) in~\(K\) we have
\begin{equation}\label{eq:Tq_ppty}
\mathcal{T}_{q} = \left\{ a_{\sigma(\Fq')}(J) : \sigma \in \Gal(K/\Q) \right\} = \left\{ \sigma(a_{\Fq'}(J)) : \sigma \in \Gal(K/\Q) \right\}.
\end{equation}
In particular, \(\mathcal{T}_{q}\) consists of either exactly one element in $\Z$ or~\(3\) conjugated elements in $\calO_K$. The former case occurs when $q$ is inert in~$K$ for instance. Taking~\(\Fq' = \tau^{-1}(\Fq)\), we see that~\(u' = a_{\Fq'}(J) \in\mathcal{T}_q\) and from the previous discussion we have
\begin{equation}\label{eq:elimination_good2}
p \mid \prod_{u\in\mathcal{T}_q} \gcd_{\sigma\in\mathfrak{S}} \Norm_{K_g/\Q}\left(\sigma(u) - a_{\sigma(\Fq)}(g)\right).
\end{equation}
Let~$(x',y')\in\{0,\dots,q-1\}^2\backslash\{(0,0)\}$ be such that we have $(a,b) \equiv (x',y') \pmod{q}$. 
We have
%Assume that~$q \nmid x'^r + y'^r$. Then \(J_r(a,b)\) has good reduction at all prime ideals above~$q$ in~\(K\) and
\[
\mathcal{T}_q(a,b)  = \mathcal{T}_q(x',y').
\]
Putting everything together, we conclude that independently of the reduction type of~\(J\) at the prime ideals above~\(q\) (that is independently of~\(a^7 + b^7 \pmod{q}\)), we have
\begin{equation}\label{E:alltogether}
p \mid M_{\Fq, \mathfrak{S}}(g) \cdot \prod_{\substack{0 \leq x, y\leq q-1 \\  q\nmid x^7 + y^7}}\prod_{u\in\mathcal{T}_q(x,y)} \gcd_{\sigma\in\mathfrak{S}} \Norm_{K_g/\Q}\left(\sigma(u) - a_{\sigma(\Fq)}(g)\right).
\end{equation}
Now, the sets~\(\mathcal{T}_q\) are easily computed in \texttt{Magma} using the command \texttt{EulerFactor} (see~\cite[Section~9]{xhyper_vol1} and~\cite{programs} for the details). Moreover, as explained in~\cite[Section~9]{xhyper_vol1}, contradicting~\eqref{E:alltogether} discards the possibility that~\eqref{E:iso7II} holds for any of the forms in the Hecke constituent~\([g]\) of~\(g\). This solves the two issues mentioned above.

Moreover, the restriction~\(K \subset K_g\) reduces the set of newform representatives~\(g\) that possibly sa\-tisfy~\eqref{E:iso7II} from~61 down to~\(25\). The remaining 25 forms are indexed in~\cite{programs} by $i$ in the list 

\begin{center}
\begin{tabular}{@{}*{9}{@{\, }c}|*{2}{@{\, }c}|*{2}{@{\, }c}@{\, }*{5}{@{\! }|c@{\! }}}
\Circled{\Cc{12}} & \underline{16} & \Circled{\Cc{17}} & \Circled{18 19} & \Circled{20 21} & \Circled{22 23} & \Circled{\Cc{24}} & \underline{26} & \Circled{\Cc{28}} & \underline{33} & \underline{\underline{38}} & \underline{41} & \underline{42} & \underline{45} & \Circled{46 47} & \Circled{48 51} & \Circled{57 58} & \Circled{60 61}
\end{tabular}
\end{center}

\begin{tikzpicture}[overlay,remember picture,>=latex,shorten >=1pt,shorten <=1pt,very thin]
\draw[<->] (12) --++(0,-10pt) -| (28);
\draw[<->] (17) --++(0,10pt) -| (24);
\end{tikzpicture}

(forms linked by an arrow or within the same circle are twist of each other and will be dealt with simultaneously as explained below) and the degrees of their fields of coefficients are respectively given by
\begin{center}
\begin{tabular}{*{12}{c}|*{2}{c}|*{2}{c}|c|*{2}{c}|*{2}{c}|*{2}{c}|*{2}{c}}
3 & 3 & 3 & 3 & 3 & 3 & 3 & 3 & 3 & 3 & 3 & 3 & 9 & 9 & 12 & 12 & 15 & 18 & 18 & 21 & 21 & 36 & 36 & 54 & 54.
\end{tabular}
\end{center}

\begin{remark}
We want to stress the fact that this `instantaneous elimination', based on the constraint on the field of coefficients (given by~\(K \subset K_g\)), is unavailable with Frey elliptic curves.  See Remark~8.5 in~\cite{xhyper_vol1} for details.
\end{remark}

At this point, the `standard' application of the elimination procedure outlined above using, for each of the remaining forms, a few auxiliary primes to contradict~\eqref{E:iso7II} takes around 40 minutes, where forms 60 and 61 require about 15 minutes each, due to the large degree of their coefficient fields (see~\cite{programs}). To considerably reduce the computation time, we apply the various techniques from~\cite[Section~9]{xhyper_vol1} as explained below.

%%%%%%%%%%
\subsubsection{Twisted forms}\label{sss:twisted_forms}
%%%%%%%%%%

The first improvement we use is based on the fundamental observation that the remaining forms come in pairs of twisted forms (of possibly lower level) by (the multiplicative of) the mod~\(7\) cyclotomic character~\(\chi_7\) (which is quadratic when restricted to~\(G_K\)).

% for the one hand and on the relation between the~\(a_{\Fq}\)-coefficients of~\(J_7(a,b)\) and~\(J_7(b,a)\) on the other hand.

More precisely, let~\(f_q \in \{1,3\}\) be the common residue degree of the prime ideals in~\(K\) above a rational prime~\(q \neq 2,3, 7\). For~\(h\) a Hilbert newform of level dividing $\Fq_2^2 \Fq_3 \Fq_7^2$, parallel weight $2$ and trivial character, and for~\(\Fq\) a prime ideal above~\(q\) in~\(K\), we have
\begin{equation*}
 a_\Fq(h \otimes \chi_7|_{G_K}) = \chi_7(\Frob_\Fq) a_\Fq(h)
 = \left\{
\begin{array}{ll}
    a_\Fq(h) & \text{if }q^{f_q} \equiv 1\pmod{7}; \\
    -a_\Fq(h) & \text{if }q^{f_q} \equiv -1\pmod{7}. \\
\end{array} 
 \right.
\end{equation*}
Comparing fields of coefficients and Fourier coefficients at the unique prime ideal~\(\Fq_{11}\) above~\(11\) in~\(K\) (using the previous relation) when needed, we identify the 
following pairs of twisted newforms by~$\chi_7$  where at least one newform corresponds to $i$ in the list above:
\begin{enumerate}[(i)]
\item\label{item:forms1} the newform $i=38$ arises by twist of a from of level $\Fq_2^2 \Fq_3$;

\item\label{item:forms2} the newforms $i=16,26,33,41,42,45$ arise by twist of newforms at level $\Fq_2^2 \Fq_3 \Fq_7$;

\item\label{item:forms3} the pairs of newforms with field of coefficient equal to~$K$ and indexed by $(i,j) = (12,28)$, $(17,24)$, $(18,19)$, $(20,21)$ and~$(22,23)$ are quadratic twists by~$\chi_7$;

\item\label{item:forms4} the pairs of newforms indexed by $(i,j) = (46,47)$, $(48,51)$, $(57,58)$ and~$(60,61)$ are quadratic twists by~$\chi_7$; this follows from the condition~\(K\subset K_g\) and the fact that these pairs are determined by the degree of their fields of coefficients, which are 18, 21, 36 and~54, respectively (all newforms of ``smaller'' level have a field of coefficients whose degree is~\(\le 15\)). 
\end{enumerate}
In the previous list, we have double-underlined the unique form in case~\eqref{item:forms1}, underlined the forms in case~\eqref{item:forms2}, and circled the pairs of forms in cases \eqref{item:forms3}-\eqref{item:forms4} (or linked the two forms in a pair by an arrow when they were not next to each other) respectively.

This calculation, including the computation of  newforms over $K$ of parallel weight~2, trivial character and levels~$\Fq_2^2 \Fq_3$, $\Fq_2^2 \Fq_3 \Fq_7$ or $\Fq_2^2 \Fq_3 \Fq_7^2$, takes a few seconds (see~\cite{programs}).

%%%%%%%%%%
\subsubsection{Symmetry}\label{sss:symmetry}
%%%%%%%%%%

Our second improvement relies on the fact that~\(C_7(b,a)\) is the quadratic twist of~\(C_7(a,b)\) by~\(-1\) (see~\cite[Proposition~3.3]{xhyper_vol1}). Write~\(J'\) for~\(J_7(b,a)\) base changed to~\(K\). For~\(\Fq\) a prime ideal above~\(q\) in~\(K\), we have (see~\cite[\S9.8]{xhyper_vol1}):
\begin{equation*}
 a_\Fq(J) 
 = \left(\frac{-1}{q}\right)^{f_q}a_\Fq(J')
 = \left\{
\begin{array}{ll}
a_\Fq(J') & \text{if }q^{f_q} \equiv 1\pmod{4}; \\
-a_\Fq(J') & \text{if }q^{f_q} \equiv -1\pmod{4}. \\
\end{array}
\right.
\end{equation*}

%%%%%%%%%%
\subsubsection{Eliminating the forms}
%%%%%%%%%%

Let us now consider a Hilbert newform~\(h\) as above (i.e. \(h\) is of level dividing $\Fq_2^2 \Fq_3 \Fq_7^2$, parallel weight $2$ and trivial character) and any subset~\(\mathfrak{S}\) of~\(\Gal(K/\Q)\).

If~\(q^{f_q}\equiv 1\pmod{4}\), we set
\begin{equation}\label{eq:Bq1}
B_{\Fq,\mathfrak{S}}(h) \coloneq  M_{\Fq,\mathfrak{S}}(h)  \cdot
\prod_{\substack{0\leq x \leq y\leq q-1 \\ q\nmid x^7 + y^7}}  \prod_{u\in\mathcal{T}_q(x,y)}\gcd_{\sigma\in\mathfrak{S}}\Norm_{K_h/\Q}\left(\sigma(u) - a_{\sigma(\Fq)}(h)\right)
\end{equation}
where the notation is exactly as above; we highlight the difference that the outermost product differs from~\eqref{E:alltogether} in that it contains the additional restriction $x \leq y$. Due to the large degrees of the coefficient fields and the size of the auxiliary primes involved in the elimination procedure, this improvement turns out to be crucial.

Else, if~\(q^{f_q}\equiv -1\pmod{4}\), we set
\begin{equation}\label{eq:Bq2}
B_{\Fq,\mathfrak{S}}(h) \coloneq  M_{\Fq,\mathfrak{S}}(h)  \cdot
\prod_{\substack{0\leq x \leq y\leq q-1 \\ q\nmid x^7 + y^7}}  \prod_{u\in\mathcal{T}_q(x,y)}\gcd_{\sigma\in\mathfrak{S}}\Norm_{K_h/\Q}\left(\sigma(u)^2 - a_{\sigma(\Fq)}(h)^2\right).
\end{equation}
From the discussion above, still assuming that~\eqref{E:iso7II} holds for~\(g\in S\), we have that $p \mid B_{\Fq,\mathfrak{S}}(g)$ in all cases; furthermore, when~$\mathfrak{S} = \Gal(K/\Q)$, we obtain a quantity~$B_q(g) = B_{\Fq,\mathfrak{S}}(g)$ depending only on~$q$ and~$[g]$.

Assume now that~\(h\) satisfies~\(g = h \otimes \chi_7\). Then we have that~\(M_{\Fq,\mathfrak{S}}(g) = M_{\Fq,\mathfrak{S}}(h)\). According to the discussion above, if~\(q^{f_q}\equiv 1\pmod{7}\) or~\(q^{f_q}\equiv -1\pmod{4}\), then we have~\(B_{\Fq,\mathfrak{S}}(g) = B_{\Fq,\mathfrak{S}}(h)\). 

Else, if~\(q^{f_q}\equiv -1\pmod{7}\) and~\(q^{f_q}\equiv 1\pmod{4}\), then~$B_{\Fq,\mathfrak{S}}(g)B_{\Fq,\mathfrak{S}}(h)$ divides  
\begin{equation}\label{eq:last_case}
M_{\Fq,\mathfrak{S}}(g)^2 \cdot
\prod_{\substack{0\leq x \leq y\leq q-1 \\ q\nmid x^7 + y^7}}  \prod_{u\in\mathcal{T}_q(x,y)}\gcd_{\sigma\in\mathfrak{S}}\Norm_{K_g/\Q}\left(\sigma(u)^2 - a_{\sigma(\Fq)}(g)^2\right),  
\end{equation}
and proving that~$p$ does not divide this number implies 
$p \nmid B_{\Fq,\mathfrak{S}}(g)$ and $p \nmid B_{\Fq,\mathfrak{S}}(h)$.

In both cases, discarding one form allows to discard the other one as well.

We now look at the forms~\(i\) for~\(i\) as in~\eqref{item:forms1}-\eqref{item:forms4} in turn using prime ideals above specific rational primes~\(q\neq 2,3,7\) with the various techniques we have introduced before. 

Suppose $i=38$. The quadratic twist of~$\rhobar_{g,\mathfrak{P}}$ by~$\chi_7|_{G_K}$ is unramified at~$\Fq_7$ by item~(\ref{item:forms1}). Then the same holds for~$\rhobar_{J,\Fp}$ by~\eqref{E:iso7II}. This is incompatible with the type bad reduction reduction of~$J$ at~\(\Fq_7\) described in \cite[Propositions~5.7 and~5.9]{xhyper_vol1}.

Each of the forms $i =16,26,33,41,42$ or $45$ in item~(\ref{item:forms2}) can be eliminated for all~$p$ by compu\-ting~$B_{\Fq,\mathfrak{S}}(g)$ for a combination of the auxiliary primes above $q \in \{5, 11, 13\}$ and various sets~\(\mathfrak{S}\). As a matter of example, we describe more precisely the procedure used for the form~\(f\) corresponding to~\(i = 16\). Using~\eqref{eq:Bq1} (with~\(h\) replaced by~\(f\)), we first compute~\(B_{\Fq_{13},\{1\}}(f)\) for a prime ideal~\(\Fq_{13}\) in~\(K\) above~\(13\). The prime divisors~\(p\neq 2, 3, 7\) of this number are~\(13, 29, 43, 281\). These are now the remaining primes we have to deal with. We then compute~\(B_{\Fq_{11},\{1\}}(f)\) using~\eqref{eq:Bq2} (here, according to~\eqref{eq:elimination_bad} and the discussion after~\eqref{eq:Tq_ppty}, the set~\(\mathfrak{S}\) does not matter as~\(11\) is inert in~\(K\)) and we find that the only prime not discarded is~\(p = 13\). We finally use refined elimination as described in~\cite[\S9.11]{xhyper_vol1} using the three prime ideals above~\(29\) in~\(K\) to deal with this last exponent. Similar arguments (but simpler since no refined elimination is required) apply to eliminate the other forms~\(i = 26, 33, 41, 42\) or~\(45\) (see \cite{programs}).

We now deal with the forms in items~(\ref{item:forms3}) and~(\ref{item:forms4}), i.e., the newforms that are quadratic twists by $\chi_7$ of other forms in the same level~$\Fq_2^2 \Fq_3 \Fq_7^2$. Each of these nine pairs, except the last pair~\((60,61)\), can be eliminated for all~$p$ using a combination of auxiliary primes  above $q \in \{5, 11, 13,17,29\}$ and the ideas explained above. As a matter of example again, we provide details on the precise~\((\Fq,\mathfrak{S})\)'s used to eliminate the pair~\((g,h) = (g, g\otimes\chi_7)\) corresponding to~\((i,j) = (22,23)\). Since~\(13\) splits in~\(K\) and satisfies~\(13 \equiv -1 \pmod{7}\) and~\(13 \equiv 1 \pmod{4}\), we compute the quantity in~\eqref{eq:last_case} with~\(\Fq = \Fq_{13}\) being any prime ideal above~\(13\) in~\(K\) and~\(\mathfrak{S} = \{1\}\). We find that the only primes dividing this number are~\(13, 29, 41, 43, 71, 113, 139, 181, 239, 307\), and~\(379\). The prime~\(11\) is inert in~\(K\) and we have~\(11^3 \equiv 1\pmod{7}\) (and also~\(11^3 \equiv -1\pmod{4}\)). Therefore, computing~\(B_{\Fq_{11},\mathfrak{S}}(g) = B_{\Fq_{11},\mathfrak{S}}(h)\) with~\(\Fq_{11}\) the only prime above~\(11\) in~\(K\) (here again the set~\(\mathfrak{S}\) does not matter as~\(11\) is inert), we deduce that~\(p\in\{13,29\}\). The prime~\(17\) is inert in~\(K\) and satisfies~\(17^3 \equiv -1 \pmod{7}\) and~\(17^3 \equiv 1 \pmod{4}\). Computing the quantity in~\eqref{eq:last_case} with~\(\Fq = \Fq_{17}\) being the only prime ideal above~\(17\) in~\(K\) (and arbitrary set~\(\mathfrak{S}\)), we discard~\(p = 29\). Therefore, it remains to deal with~\(p = 13\). We do it using refined elimination with two auxiliary prime ideals above the (split) prime~\(29\) as explained in~\cite[\S9.11]{xhyper_vol1}. All of the other pairs in items~(\ref{item:forms3}) and~(\ref{item:forms4}), except the one corresponding to~\((i, j) = (60, 61)\) can be dealt with using similar (yet simpler as no refined elimination is required) arguments.

Finally, due to the large degree of their coefficient field, forms corresponding to $(i,j) = (60,61)$ require special care, as we now explain, following, in our situation, the discussion in~\cite[\S9.6]{xhyper_vol1}.

Recall that~\(S\) denotes the set of Hilbert newforms over~$K$ of level $\Fq_2^2 \Fq_3 \Fq_7^2$, parallel weight $2$ and trivial cha\-racter. Then, there is a right action~\((f,\sigma)\mapsto f^\sigma\) of~\(\Gal(K/\Q)\) on~\(S\) such that
\[
a_\Fq\left(f^\sigma\right) = a_{\sigma(\Fq)}(f),
\]
for any prime ideal~\(\Fq\) in~\(\calO_K\) (see \cite[\S 6.3]{Gelbart-Boston} for a high-level description).

This action commutes with that defined in~\eqref{eq:Shimura} and thus~\(\Gal(K/\Q)\) acts on the set of Hecke constituents~\(\{[f] : f\in S\}\) as~\(([f],\sigma)\mapsto [f^\sigma]\). We have $[K : \Q] =3$ so the orbits of the action of $\Gal(K/\Q)$ are of size 1 or 3. Since forms~$60, 61$ are the only forms whose field of coefficients has degree 54, their Hecke constituent must be fixed by the action of $\Gal(K/\Q)$.

Let $g$ be the newform corresponding to $i=60$. A quick check of the Fourier coefficients at the primes above~$13$ shows that this form is not a base change from~\(\Q\), by which we mean that~\(g^{\sigma} \neq g\), for any non-trivial~\(\sigma\in \Gal(K/\Q)\).

Recall that~\(\sigma_0\) denotes a fixed generator of~\(\Gal(K/\Q)\). Then, there exists~\(\tau_0\in G_\Q\) such that we have~\(g^{\sigma_0} = {}^{\tau_0} g\). Moreover~\(\tau_0\) acts through its restriction to~\(K_g\) and this restriction is an automorphism of~\(K_g\) since~\(\tau_0(a_\Fq(g)) = a_{\sigma_0(\Fq)}(g)\in K_g\) for all prime ideals~\(\Fq\) in~\(\calO_K\). 

Let~\(E_g\) be the subfield of~\(K_g\) fixed by the subgroup of~\(\Aut(K_g)\) generated by~\(\tau_0\). Let us show that~\(\tau_0\) has order~\([K:\Q]\) in~\(\Aut(K_g)\). For every~\(k\geq1\), we have~\(g^{\sigma_0^k} = {}^{\tau_0^k}g\) as the two actions commute. If~\(\sigma_0^k\) is trivial, then we have~\(\tau_0^k(a_\Fq(g)) = a_\Fq({}^{\tau_0^k} g) = a_\Fq(g)\) for all prime ideals~\(\Fq\) in~\(\calO_K\), and hence~\(\tau_0^k\) is also trivial. Conversely, if~\(\tau_0^k\) is trivial, then we have~\(g^{\sigma_0^k} = g\) and we conclude that~\(\sigma_0^k\) is trivial by the fact that~\(g\) is not a base change from~\(\Q\).

Therefore, we have~\([K_g:\Q] = [K_g:E_g] [E_g:\Q] = [K:\Q] [E_g:\Q]\), and hence
\[
[E_g:\Q] = [K_g:\Q]/[K:\Q] = [K_g:K] = 18.
\]
Assume now that~\(q\) is inert in~\(K\) and write~\(q\calO_K = \Fq\). 

Suppose further that~\(J\) has good reduction at~\(\Fq\). On one hand, we know that~\(\mathcal{T}_q = \{a_\Fq(J)\}\) consists of a single (rational) integer. On the other hand, we have 
\[
\tau_0(a_\Fq(g)) = a_\Fq({}^{\tau_0} g) = a_\Fq(g^{\sigma_0}) = a_{\sigma_0(\Fq)}(g) = a_\Fq(g)
\]
and hence~\(a_\Fq(g)\in E_g\). Therefore, we have
\[
\Norm_{K_g/\Q}\left(a_\Fq(J) - a_\Fq(g)\right) = \Norm_{E_g/\Q}\left(a_\Fq(J) - a_\Fq(g)\right)^{[K_g:E_g]}.
\]
Similarly, if~\(J\) has bad multiplicative reduction at~\(\Fq\), then we have
\begin{equation*}
\Norm_{K_g/\Q}\left(a_{\Fq}(g)^2 - (N(\Fq)+1)^2\right) = \Norm_{E_g/\Q}\left(a_{\Fq}(g)^2 - (N(\Fq)+1)^2\right)^{[K_g:E_g]}
\end{equation*}
as~\(N(\Fq) \in \Z\).

We conclude that independently of the reduction type of~\(J\) at the prime ideals above the inert prime~\(q\) (that is independently of~\(a^7 + b^7 \pmod{q}\)), if~\eqref{E:iso7II} holds for~\(g\), then we have
\begin{equation}\label{eq:divisibility_with_E}
p \mid \Norm_{E_g/\Q}\left(a_{\Fq}(g)^2 - (N(\Fq)+1)^2\right) \cdot \prod_{\substack{0 \leq x, y\leq q-1 \\  q\nmid x^7 + y^7}} \Norm_{E_g/\Q}\left(a_\Fq(J) - a_{\Fq}(g)\right).
\end{equation}
We note that the right-hand side of~\eqref{eq:divisibility_with_E} does not change if~\(g\) is replaced by a conjugate~\({}^\tau g\). Indeed, this follows from the above identities and the fact that 
$[K_{\tau^g}:E_{\tau^g}] = [K_g:E_g]$, where this equality of degrees holds 
because~\(\left({}^\tau g\right)^{\sigma_0} = {}^{\tau\tau_0\tau^{-1}}\left({}^\tau g\right)\) and hence~\(E_{{}^\tau g} \coloneq  K_{{}^\tau g}^{\langle\tau\tau_0\tau^{-1}\rangle} = \tau(E_g)\).

Reasonning as in~\S\S\ref{sss:twisted_forms}-\ref{sss:symmetry}, we slightly modify the right-hand side of~\eqref{eq:divisibility_with_E} in order to avoid useless computations (by using the symmetry of~\(C_7(a,b)\)) and to discard both~\(g\) and~\(g\otimes\chi_7\) (corresponding to the form number~\(61\)) at once.

We apply this method with the inert primes~\(q = 5, 11\). This allows for some time improvement, but unfortunately, those primes are still insufficient to eliminate the pair~\((g, g\otimes\chi_7)\) for the exponents~\(p\in\{5,11,23,47\}\).

Therefore, we use a prime above~$q=13$ which (totally) splits in~$K$. Given the large amount of repetitive calculations involved in the elimination step, having nice polynomials describing the field of coefficients and its subfields can also accelerate the elimination of this newform. With {\tt Magma} we can efficiently write 
$K_g/\Q$ as a relative extension $K_g/K_6/\Q$ 
where $K_6$ is of degree~$6$ such that $K \subset K_6$ and the polynomials defining $K_g/K_6$ and $K_6/\Q$ have nicer coefficients. This cuts down the time used when dealing with auxiliary primes above~13 considerably (see~\cite{programs}).

Using all these improvements, the total running time used to eliminate the pair of forms corresponding to~\((i,j) = (60,61)\) finally comes down to less that one minute, compared to~\(15\) minutes for each of these forms without these techniques (see the discussion at the end of~\S\ref{ss:general_strategy}).

This achieves the proof of Theorem~\ref{T:overQvariety} and hence of Theorem~\ref{T:main} according to the argument given in Section~\ref{S:Frey7}. In total, this proof takes approximately \(10\) minutes.

\begin{remark}
We notice that these ideas can actually be applied to all 25 forms we need to eliminate.
Indeed, from the degrees listed above, we see there are at most 2 Hecke constituents of newforms whose field of coefficients has degree~$d_g > 3$, and hence the
same arguments used for the forms corresponding to~$(i,j) = (60,61)$ can be used. In fact, with a few comparisons of Fourier coefficients, we can also deal with the forms with $d_g=3$. However, the time gains are negligible compared to the forms~$(i,j) = (60,61)$ which took~\(3/4\) of the overall computational time of the `standard' elimination step.
\end{remark}

%%%%%%%%%%%%%%%%%%%%%
\subsection{Proof of Theorem~\ref{T:overQvariety7adic}}
%%%%%%%%%%%%%%%%%%%%%

Suppose further that~$7 \mid a+b$.

Let $\Fp \mid p$ in~$K$.
From Proposition~\ref{P:case7} and~\cite[Corollary~8.7]{xhyper_vol1} we have
\begin{equation*}
\rhobar_{J,\Fp} \simeq \rhobar_{g,\mathfrak{P}} \otimes \chi_7|_{G_K}, 
\end{equation*}
where~$g$ is a Hilbert newform over~$K$ of level $\Fq_2^2 \Fq_3 \Fq_7$, parallel weight $2$, trivial character and a prime ideal~$\mathfrak{P} \mid p$ in~$K_g$, the  coefficient field of~$g$. Furthermore, we have~$K \subset K_g$. Computing the corresponding space of newform and checking whether $K \subset K_g$ leaves us with 6 newforms to eliminate. Now, similar (but simpler) arguments as those used in the proof of Theorem~\ref{T:overQvariety}, using the auxiliary primes $q=5,11,13$ eliminate all 6 forms for all exponents~$p$. The total running time here is less than one minute (see Section~\ref{S:Frey7} and~\cite{programs}).

Together with the argument given in Section~\ref{S:Frey7}, this concludes our last and fastest-of-all proof of Theorem~\ref{T:main}.

%%%%%%%%%%%%%%%%%%%%%%%%%%%%%%%%%%%%%%%%%%%%%%%%%%%%%%
\section{Proof of Theorem~\ref{T:other} and Corollary~\ref{C:reduction2CM}}
%%%%%%%%%%%%%%%%%%%%%%%%%%%%%%%%%%%%%%%%%%%%%%%%%%%%%%

%%%%%%%%%%%%%%%%%%%%%
\subsection{Proof of Theorem~\ref{T:other}}
%%%%%%%%%%%%%%%%%%%%%

We first prove Part~(\ref{T:other_item1}). Assume~$n = p \ge 5$ is a prime satisfying~\(p \neq 7\) (see Section~\ref{S:rks}). Since the levels are not too big, we cover this case in the same way as for Theorem~\ref{T:main} using the proof that relies only on the Frey curve $F$ (see Section~\ref{S:Frey7first}), except that we only need the case $ab$ odd from Part~\ref{item:partA}, and all of Part~\ref{item:partB}, with the change that the prime $\Fq_3$ is removed from the levels of Hilbert newforms computed (see~\cite{programs}).

Let us now prove Part~(\ref{T:other_item2}). Let~$(a,b,c)$ be a non-trivial primitive solution to the equation~\eqref{main-equ} with $n=p$ prime, $p\notin \left\{ 2, 3, 7 \right\}$. By Part~(\ref{T:other_item1}), we have that $2 \mid ab$ and $7 \nmid a + b$. So up to switching the roles of $a$ and $b$ and/or negating both $a$ and $b$, we may assume that $a \equiv 0 \pmod 2$ and $b \equiv 1 \pmod 4$.

Let $\Fp \mid p$ in~$K$. From Proposition~\ref{P:case7} and~\cite[Theorem~8.3]{xhyper_vol1} we have
\begin{equation}
\rhobar_{J,\Fp} \simeq \rhobar_{g,\mathfrak{P}}, 
\end{equation}
where~$g$ is in the new subspace $S$ of Hilbert newforms of level $\Fq_2^2 \Fq_{7}^2$, parallel weight $2$, and trivial character. Moreover,  from conclusion~(iv) of that theorem we have~$K \subset K_g$ and $K_g$ is the coefficient field of $g$. 

The space $S$ has dimension $27$ and decomposes into $9$ Hecke constituents. Its initialization in~\texttt{Magma} takes about~\(9\) hours (see Remark~\ref{rk:Steinberg}). There are $4$ Hilbert newforms $g$ indexed by $4,5,6,8$ which have field of coefficients $K_g \supseteq K$ (and in fact $K_g = K$). Forms $4, 6, 8$ can be eliminated using~\(\Fq_3\) and any prime above~\(13\) as auxiliary primes. Form~$5$ cannot be eliminated and corresponds to~$J(0,1)$ (see~\cite{programs}).

Finally, because switching the roles of $a$ and $b$ and/or negating both $a$ and $b$ changes $\rhobar_{J(0,1),\Fp}$ by a character of order dividing $2$, we obtain the conclusion in the form stated.

%%%%%%%%%%%%%%%%%%%%%
\subsection{Proof of Corollary~\ref{C:reduction2CM}}
%%%%%%%%%%%%%%%%%%%%%

The variety $J(a,b)/K$ satisfies the hypotheses in \cite[Conjecture 4.1]{DarmonDuke}. Thus for large enough~$p$, the representation~$\rhobar_{J(a,b),\Fp}$ has image not contained in the normalizer of a Cartan subgroup. This contradicts the isomorphism $\rhobar_{J(a,b),\Fp} \simeq \rhobar_{J(0,1),\Fp} \otimes \chi$ in Theorem~\ref{T:other} because $\rhobar_{J(0,1),\Fp}$ is contained in the normalizer of a Cartan since $J(0,1)$ has CM (see~\S\ref{ss:recap}).

%\bibliography{mybib}{}

\begin{thebibliography}{10}

\bibitem{AnniSiksek}
Samuele Anni and Samir Siksek.
\newblock Modular elliptic curves over real abelian fields and the generalized
  {F}ermat equation {$x^{2\ell}+y^{2m}=z^p$}.
\newblock {\em Algebra Number Theory}, 10(6):1147--1172, 2016.

\bibitem{BenSki}
Michael\thinspace{}A. Bennett and Christopher Skinner.
\newblock Ternary diophantine equations via galois representations and modular
  forms.
\newblock {\em Canad. J. Math.}, 56(1):23--54, 2004.

\bibitem{BenVatYaz}
Michael\thinspace{}A. Bennett, Vinayak Vatsal, and Soroosh Yazdani.
\newblock Ternary {D}iophantine equations of signature {$(p,p,3)$}.
\newblock {\em Compos. Math.}, 140(6):1399--1416, 2004.

\bibitem{billereyIrred}
Nicolas Billerey.
\newblock Crit\`eres d'irr\'{e}ductibilit\'{e} pour les repr\'{e}sentations des
  courbes elliptiques.
\newblock {\em Int. J. Number Theory}, 7(4):1001--1032, 2011.

\bibitem{BCDDF}
Nicolas Billerey, Imin Chen, Lassina Demb\'el\'e, Luis\thinspace{}V.
  Dieulefait, and Nuno Freitas.
\newblock Some extensions of the modular method and {F}ermat equations of
  signature $(13,13,n)$.
\newblock {\em Publ. Mat.}, 67(2):715--741, 2023.

\bibitem{programs}
Nicolas Billerey, Imin Chen, Luis\thinspace{}V. Dieulefait, and Nuno Freitas.
\newblock Supporting files for this paper,
  \href{https://github.com/NicolasBillerey/xhyper}{https://github.com/NicolasBillerey/xhyper}.

\bibitem{BCDF2}
Nicolas Billerey, Imin Chen, Luis\thinspace{}V. Dieulefait, and Nuno Freitas.
\newblock A multi-{F}rey approach to {F}ermat equations of signature $(r,r,p)$.
\newblock {\em Trans. Amer. Math. Soc.}, 371(12):8651--8677, 2019.

\bibitem{xhyper_vol1}
Nicolas Billerey, Imin Chen, Luis\thinspace{}V. Dieulefait, and Nuno Freitas.
\newblock On {D}armon's program for the generalized {F}ermat equation,~{I}.
\newblock ArXiv preprint,
  \href{https://arxiv.org/abs/2205.15861}{https://arxiv.org/abs/2205.15861},
  2023.

\bibitem{magma}
Wieb Bosma, John Cannon, and Catherine Playoust.
\newblock The {M}agma algebra system. {I}. {T}he user language.
\newblock {\em J. Symbolic Comput.}, 24(3-4):235--265, 1997.
\newblock Computational algebra and number theory (London, 1993).

\bibitem{BreuilDiamond}
Christophe Breuil and Fred Diamond.
\newblock Formes modulaires de {H}ilbert modulo {$p$} et valeurs d'extensions
  entre caract\`eres galoisiens.
\newblock {\em Ann. Sci. \'Ec. Norm. Sup\'er. (4)}, 47(5):905--974, 2014.

\bibitem{BruNaj}
Peter Bruin and Filip Najman.
\newblock A criterion to rule out torsion groups for elliptic curves over
  number fields.
\newblock {\em Research in Number Theory}, 2:1--12, 2016.

\bibitem{DahmenSiksek}
Sander\thinspace{}R. Dahmen and Samir Siksek.
\newblock Perfect powers expressible as sums of two fifth or seventh powers.
\newblock {\em Acta Arith.}, 164(1):65--100, 2014.

\bibitem{DarmonDuke}
Henri Darmon.
\newblock Rigid local systems, {H}ilbert modular forms, and {F}ermat's {L}ast
  {T}heorem.
\newblock {\em Duke Math. J.}, 102(3):413--449, 2000.

\bibitem{DarmonMerel}
Henri Darmon and Lo\"ic Merel.
\newblock Winding quotients and some variants of {F}ermat's {L}ast {T}heorem.
\newblock {\em J. Reine Angew. Math.}, 490:81--100, 1997.

\bibitem{DNS20}
Maarten Derickx, Filip Najman, and Samir Siksek.
\newblock Elliptic curves over totally real cubic fields are modular.
\newblock {\em Algebra \& Number Theory}, 14(7):1791--1800, 2020.

\bibitem{dia95}
Fred Diamond and Kenneth Kramer.
\newblock Appendix: classification of {$\overline\rho_{E,l}$} by the
  {$j$}-invariant of {$E$}.
\newblock In {\em Modular forms and {F}ermat's last theorem ({B}oston, {MA},
  1995)}, pages 491--498. Springer, New York, 1997.

\bibitem{DF2}
Luis\thinspace{}V. Dieulefait and Nuno Freitas.
\newblock Fermat-type equations of signature {$(13,13,p)$} via {H}ilbert
  cuspforms.
\newblock {\em Math. Ann.}, 357(3):987--1004, 2013.

\bibitem{DF1}
Luis\thinspace{}V. Dieulefait and Nuno Freitas.
\newblock The {F}ermat-type equations {$x^5+y^5=2z^p$} or {$3z^p$} solved
  through {$\mathbb{Q}$}-curves.
\newblock {\em Math. Comp.}, 83(286):917--933, 2014.

\bibitem{F}
Nuno Freitas.
\newblock Recipes to {F}ermat-type equations of the form {$x^r+y^r=Cz^p$}.
\newblock {\em Math. Z.}, 279(3-4):605--639, 2015.

\bibitem{FS}
Nuno Freitas and Samir Siksek.
\newblock Criteria for irreducibility of {${\rm mod}\, p$} representations of
  {F}rey curves.
\newblock {\em J. Th\'eor. Nombres Bordeaux}, 27(1):67--76, 2015.

\bibitem{Fuj}
Kazuhiro {Fujiwara}.
\newblock {Level optimization in the totally real case}.
\newblock ArXiv preprint,
  \href{https://arxiv.org/abs/math/0602586}{https://arxiv.org/abs/math/0602586},
  2006.

\bibitem{Gelbart-Boston}
Stephen Gelbart.
\newblock Three lectures on the modularity of {$\overline\rho_{E,3}$} and the
  {L}anglands reciprocity conjecture.
\newblock In {\em Modular forms and {F}ermat's last theorem ({B}oston, {MA},
  1995)}, pages 155--207. Springer, New York, 1997.

\bibitem{HalberstadtKraus}
Emmanuel Halberstadt and Alain Kraus.
\newblock Courbes de {F}ermat: r\'{e}sultats et probl\`emes.
\newblock {\em J. Reine Angew. Math.}, 548:167--234, 2002.

\bibitem{Jarv2}
Frazer Jarvis.
\newblock Level lowering for modular mod $\ell$ {G}alois representations over
  totally real fields.
\newblock {\em Mathematische Annalen}, 313:141--160, 1999.

\bibitem{Jarv}
Frazer Jarvis.
\newblock Correspondences on {S}himura curves and {M}azur's principle at {$p$}.
\newblock {\em Pacific J. Math.}, 213(2):267--280, 2004.

\bibitem{kraushyper}
Alain Kraus.
\newblock On the equation $x^r+y^r=z^p$.
\newblock Notes for a talk given at IEM, Universit\"at Duisburg-Essen, 1998. A
  copy has been included in \cite{programs}.

\bibitem{kraus6}
Alain Kraus.
\newblock Sur le d\'{e}faut de semi-stabilit\'{e} des courbes elliptiques \`a
  r\'{e}duction additive.
\newblock {\em Manuscripta Math.}, 69(4):353--385, 1990.

\bibitem{Kraus3}
Alain Kraus.
\newblock Courbes elliptiques semi-stables et corps quadratiques.
\newblock {\em Journal of Number Theory}, 60:245--253, 1996.

\bibitem{kraus7}
Alain Kraus.
\newblock D\'{e}termination du poids et du conducteur associ\'{e}s aux
  repr\'{e}sentations des points de {$p$}-torsion d'une courbe elliptique.
\newblock {\em Dissertationes Math. (Rozprawy Mat.)}, 364:39, 1997.

\bibitem{kraus5}
Alain Kraus.
\newblock On the equation {$x^p+y^q=z^r$}: a survey.
\newblock {\em Ramanujan J.}, 3(3):315--333, 1999.

\bibitem{Lame}
Gabriel Lam\'e.
\newblock M\'emoire d'analyse ind\'etermin\'ee, d\'emontrant que l'\'equation
  $x^7+y^7=z^7$ est impossible en nombres entiers.
\newblock {\em Journal de Math\'ematiques Pures et Appliqu\'ees}, 1e s{\'e}rie,
  5, 1840.

\bibitem{Martin}
Kimball Martin.
\newblock The {J}acquet-{L}anglands correspondence, {E}isenstein congruences,
  and integral {$L$}-values in weight 2.
\newblock {\em Math. Res. Lett.}, 24(6):1775--1795, 2017.

\bibitem{papado}
Ioannis Papadopoulos.
\newblock Sur la classification de {N}\'eron des courbes elliptiques en
  caract\'eristique r\'esiduelle {$2$} et {$3$}.
\newblock {\em J. Number Theory}, 44(2):119--152, 1993.

\bibitem{Raj}
Ali Rajaei.
\newblock On the levels of mod {$\ell$} {H}ilbert modular forms.
\newblock {\em J. Reine Angew. Math.}, 537:33--65, 2001.

\bibitem{Ser72}
Jean-Pierre Serre.
\newblock Propri\'et\'es galoisiennes des points d'ordre fini des courbes
  elliptiques.
\newblock {\em Invent. Math.}, 15(4):259--331, 1972.

\bibitem{Serre87}
Jean-Pierre Serre.
\newblock Sur les repr\'esentations modulaires de degr\'e $2$ de
  $\text{Gal}(\overline{{\mathbb Q}}/{{\mathbb Q}})$.
\newblock {\em Duke Math. J.}, 54:179--230, 1987.

\bibitem{shi78}
Goro Shimura.
\newblock The special values of the zeta functions associated with {H}ilbert
  modular forms.
\newblock {\em Duke Math. J.}, 45(3):637--679, 1978.

\end{thebibliography}
%\bibliographystyle{plain}

\end{document}